\newcommand{\A}{\mathbb{A}}
\renewcommand{\k}{\Bbbk}
\renewcommand{\O}{\mathcal{O}}
\renewcommand{\P}{\mathbb{P}}
\newcommand{\Z}{\mathbb{Z}}
\newcommand{\BL}{\mathrm{BL}}
\newcommand{\Spec}{\mathrm{Spec}\,}
\newcommand{\Hom}{\mathrm{Hom}\,}
\newcommand{\RGamma}{\mathrm{R}\Gamma\,}
\newcommand{\Dbcoh}{D^b_{\!\mathrm{coh}}}
\newcommand{\Dperf}{D\mathrm{Perf}}
\newcommand{\Dqcoh}{D_{\mathrm{QCoh}}}
\newcommand{\Tot}{\mathrm{Tot}}
\newcommand{\GL}{\mathrm{GL}}
\newcommand{\dual}{{\scriptstyle\vee}}
\newcommand{\iso}{\simeq}
\newcommand{\caniso}{\cong}
\newcommand{\codim}{\operatorname{codim}}
\newcommand{\Gr}{\operatorname{Gr}}
\declaretheoremstyle[
headformat=\NUMBER.\,\NAME\NOTE,
postheadspace=.5em,
spaceabove=6pt,
headfont=\normalfont\bfseries,
notefont=\mdseries, notebraces={(}{)},
bodyfont=\normalfont\itshape
]{plainswap}
\declaretheoremstyle[
headformat=\NUMBER.\,\NAME\NOTE,
postheadspace=.5em,
spaceabove=6pt,
headfont=\normalfont\bfseries,
notefont=\mdseries, notebraces={(}{)},
bodyfont=\normalfont
]{definitionswap}
\declaretheoremstyle[
headformat=\NAME\NOTE,
postheadspace=.5em,
spaceabove=6pt,
headfont=\normalfont\itshape,
notefont=\mdseries, notebraces={(}{)},
bodyfont=\normalfont
]{myremark}
\declaretheorem[style=plainswap, name=Theorem, sharenumber=subsection]{theorem}
\declaretheorem[style=plainswap, numberlike=theorem, name=Proposition]{proposition}
\declaretheorem[style=plainswap, numberlike=theorem, name=Lemma]{lemma}
\declaretheorem[style=plainswap, numberlike=theorem, name=Corollary]{corollary}
\theoremstyle{definition}
\declaretheorem[style=definitionswap, numberlike=theorem, name=Definition]{definition}
\declaretheorem[style=definitionswap, numberlike=theorem, name=Example]{numberedexample}
\theoremstyle{myremark}
\newtheorem*{remark}{Remark}
\theoremstyle{remark}
\newtheorem{proofstep}{Step}
\begin{document}

\title{Semiorthogonal decompositions on total spaces of tautological bundles}
\author{Dmitrii Pirozhkov}
\address{Department of Mathematics, Columbia University, New York, New York, USA}
\email{dpirozhkov@math.columbia.edu}
\date{July 4, 2018}

\begin{abstract}
Let $U$ be the tautological subbundle on the Grassmannian $\Gr(k, n)$. There is a natural morphism $\Tot(U) \to \A^n$. Using it, we give a semiorthogonal decomposition for the bounded derived category $\Dbcoh(\Tot(U))$ into several exceptional objects and several copies of $\Dbcoh(\A^n)$. We also prove a global version of this result: given a vector bundle $E$ with a regular section $s$, consider a subvariety of the relative Grassmannian $\Gr(k, E)$ of those subspaces which contain the value of $s$. The derived category of this subvariety admits a similar decomposition into copies of the base and the zero locus of $s$. This may be viewed as a generalization of the blow-up formula of Orlov, which is the case $k = 1$.
\end{abstract}

\maketitle

\section{Introduction}

Among the various invariants associated to an algebraic variety, the derived category of coherent sheaves on it is one of the richest. The main approach for describing these categories is to decompose them into simpler pieces using semiorthogonal decompositions. This idea, introduced in \cite{beil78}, was realized by Beilinson for projective spaces, and in \cite{kapranov84} for arbitrary Grassmannians.

Both of these decompositions were given as full exceptional collections, i.e.~decompositions into copies of the derived category of vector spaces. A decomposition with more complicated building blocks is used in Orlov's \cite{orlov93} result for blow-ups. For any blow-up $p\colon Y \to X$ of a smooth algebraic variety with a smooth center $Z \subset X$, Orlov described $\Dbcoh(Y)$ in terms of the categories $\Dbcoh(X)$ and $\Dbcoh(Z)$.

A special case is the blow-up of the affine space $\A^n$ in the origin. The fiber over the origin is $\P^{n-1}$, the projectivization of the original space, and the blow-up may be identified with the total space of $\O(-1)$ on this central fiber. Beilinson's description of $\Dbcoh(\P^{n-1})$ and Orlov's description of $\Dbcoh(\Tot(\O_{\P^{n-1}}(-1)))$ happen to be very similar.

In this paper we generalize this observation to all Grassmannians. Let $V$ be a vector space of dimension $n$. For an integer $k < n$ let $U$ denote the tautological subbundle on $\Gr(k, V)$. There is a morphism $\Tot(U) \to \A(V) = \A^n$ induced by the inclusion $U \subset V \otimes \O_{\Gr(k, V)}$. This is an analogue of the blow-up morphism $\Tot(\O_{\P^{n-1}}(-1)) \to \A^n$. The main result of this paper is the following theorem, which in the case $k = 1$ agrees with the Orlov's semiorthogonal decomposition.

{
\hypersetup{hidelinks}
\renewcommand{\thetheorem}{\ref{theorem on decomposition for ukn}}
\begin{theorem}
There exists a semiorthogonal decomposition:
\[
\Dbcoh(\Tot(U)) \, \caniso \, \left\langle \,\, \textstyle\binom{n-1}{k} \, \mathrm{copies \, of}\, D^b(\mathrm{Vect}), \,\,  \textstyle\binom{n-1}{k-1} \, \mathrm{copies \, of} \, \Dbcoh(\A(V))  \,\, \right\rangle .
\]
\end{theorem}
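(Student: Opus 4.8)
The plan is to transport Kapranov's full exceptional collection from $\Gr(k,V)$ up to $\Tot(U)$ along the projection $\pi\colon\Tot(U)\to\Gr(k,V)$ and then regroup it, using two auxiliary regular sections. On $\Tot(U)$ the tautological vector is a regular section $w$ of $\pi^*U$ whose vanishing locus is the zero section $j\colon\Gr(k,V)\hookrightarrow\Tot(U)$, a regular embedding with normal bundle $U$; dually $\Tot(U)=\{(v,W):v\in W\}$ is the vanishing locus inside $\A(V)\times\Gr(k,V)$ of the regular section $(v,W)\mapsto[v]$ of $(V\otimes\O)/U$, so the map $f\colon\Tot(U)\to\A(V)$ of the introduction factors as $\Tot(U)\hookrightarrow\A(V)\times\Gr(k,V)\to\A(V)$. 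Two facts I would record first. Since $\pi$ is affine, $R\pi_*$ is conservative and $\pi_*\O_{\Tot(U)}=\bigoplus_{m\ge0}\mathrm{Sym}^mU^\vee$, so the objects $\pi^*\Sigma^\alpha U^\vee$ for Young diagrams $\alpha$ inside the $k\times(n-k)$ rectangle already generate $\Dbcoh(\Tot(U))$; and since $\A(V)$ is affine, $Rf_*(\pi^*\mathcal F)\cong\bigoplus_{m\ge0}R\Gamma\bigl(\Gr(k,V),\mathcal F\otimes\mathrm{Sym}^mU^\vee\bigr)$ for every $\mathcal F$ on $\Gr(k,V)$, as a complex of $\mathrm{Sym}^\bullet V^\vee$-modules. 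Consequently every $\mathrm{Ext}$-computation on $\Tot(U)$ involving these objects or pushforwards along $j$ collapses — via the projection formula, the adjunctions for $\pi$ and $f$, and Koszul duality for $j$ — to the cohomology on $\Gr(k,V)$ of bundles of the shape $\Sigma^\lambda U^\vee\otimes\Sigma^\mu U\otimes\mathrm{Sym}^mU^\vee$ or $\Sigma^\lambda U^\vee\otimes\Sigma^\mu U\otimes\wedge^pU$, which Littlewood--Richardson together with Borel--Weil--Bott evaluates as a sum of representations $\Sigma^\nu V^\vee$ over the weights $\nu$ with at most $k$ (possibly negative) parts for which $\nu+\rho$ is regular.

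The first block should then be $\binom{n-1}{k}$ exceptional objects: the pushforwards $j_*\Sigma^\alpha U^\vee$, twisted uniformly by a suitable power of $\det U^\vee$, with $\alpha$ now confined to the \emph{smaller} rectangle $k\times(n-1-k)$. The point is that the column bound $\alpha_1\le n-1-k$ is exactly what drives the relevant weights onto Bott walls, so that these objects are exceptional and form an exceptional collection in the order refining the inclusion of Young diagrams; on $\Gr(2,4)$ one already sees the necessity of the bound, since $j_*\Sigma^{(2)}U^\vee$ is \emph{not} exceptional and $(2)$ is not in the $2\times1$ rectangle. The second block should be $\binom{n-1}{k-1}$ copies of $\Dbcoh(\A(V))$, embedded by functors of the form $Lf^*(-)\otimes\mathcal E_\beta$ indexed by Young diagrams $\beta$ in the $(k-1)\times(n-k)$ rectangle, and this is where the genuine difficulty lies. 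Over $\A(V)\setminus0$ the morphism $f$ is the relative Grassmannian $\Gr(k-1,\bar V)$ with $\bar V=(V\otimes\O)/\O\cdot w$ of rank $n-1$, whose relative Beilinson--Kapranov collection provides $\binom{n-1}{k-1}$ copies of $\Dbcoh(\A(V)\setminus0)$ via $Lf^*(-)\otimes\Sigma^\beta\bar U^\vee$; one must \emph{extend each of these across the special fibre} $j(\Gr(k,V))$ to a fully faithful functor on all of $\Tot(U)$. The obvious candidate $\mathcal E_\beta=\pi^*\Sigma^\beta U^\vee$ is wrong already on $\Gr(2,4)$ for $\beta=(1)$: there $Rf_*(\pi^*U^\vee\otimes\pi^*U)\ne\O_{\A(V)}$, so $Lf^*(-)\otimes\pi^*U^\vee$ is not even fully faithful. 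For $k\le2$ one can take $\mathcal E_\beta$ to be a line bundle — the powers $(\det U^\vee)^{0},\dots,(\det U^\vee)^{n-2}$ work — so the substantive obstacle is constructing the higher-rank extensions when $k\ge3$; I would do this by induction on $k$, peeling one row off every diagram and reducing the geometry along the special fibre to the $\Gr(k-1,n-1)$-bundle situation, with base case $k=1$ being exactly Orlov's blow-up formula for $\Tot(\O_{\P^{n-1}}(-1))$.

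It remains to check that the two blocks, in the stated order (exceptional objects first, then the $\Dbcoh(\A(V))$-copies, possibly after one mutation), are semiorthogonal and jointly generating. The cross-vanishing $\RHom_{\Tot(U)}\bigl(Lf^*(G)\otimes\mathcal E_\beta,\ j_*\Sigma^\alpha U^\vee\bigr)=0$ becomes, after adjunction and the projection formula, the statement that $R\Gamma\bigl(\Gr(k,V),\Sigma^\alpha U^\vee\otimes Lj^*\mathcal E_\beta^\vee\bigr)=0$ for all $\alpha,\beta$ in their ranges, and the uniform $\det U^\vee$-twist in the first block is chosen precisely to make these land in singular chambers — this is already what forces the twist in Orlov's case. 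Full faithfulness of each $Lf^*(-)\otimes\mathcal E_\beta$ and their mutual semiorthogonality reduce likewise to Borel--Weil--Bott through $Rf_*\pi^*(-)=\bigoplus_{m}R\Gamma(-\otimes\mathrm{Sym}^mU^\vee)$. For generation one argues by descending induction on the Young diagram $\alpha$ that $\pi^*\Sigma^\alpha U^\vee$ lies in the subcategory generated by the two blocks: when $\alpha$ has at most $n-1-k$ columns it is pulled in by Koszul duality for $j$, and when $\alpha$ has a full first row one uses the Koszul complex of $w$ — equivalently, staircase-type exact sequences on $\Gr(k,V)$ pulled back along $\pi$ — to rewrite $\pi^*\Sigma^\alpha U^\vee$ in terms of $Lf^*$-pullbacks of Schur functors of $V^\vee$ and of $\pi^*\Sigma^{\alpha'}U^\vee$ with $\alpha'$ strictly smaller. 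I expect the only step that is not Borel--Weil--Bott bookkeeping organized around the induction on $k$ to be the construction and semiorthogonality of the $\Dbcoh(\A(V))$-embeddings for $k\ge3$.
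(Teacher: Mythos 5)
You have the first block right (the $\binom{n-1}{k}$ objects $j_*L_\lambda(U)$ with $\lambda$ having a full first column, i.e.\ a uniform $\det U^{\scriptscriptstyle\vee}$-twist of diagrams in the $k\times(n-1-k)$ box), and the reduction of all Hom-computations to Borel--Weil--Bott on $\Gr(k,V)$ is the right engine. But the step you yourself flag as ``the substantive obstacle'' --- constructing the kernels $\mathcal E_\beta$ for the $\binom{n-1}{k-1}$ copies of $\Dbcoh(\A(V))$ --- is exactly the step you never carry out, and the induction on $k$ you sketch is not a proof. Moreover it is not an extension-across-the-special-fibre problem at all: the correct kernels are simply $\pi^*L_{\mu^T}(Q)$, Schur functors of the \emph{quotient} bundle, for $\mu$ in the $k\times(n-k)$ box with at most $k-1$ rows. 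One arrives at them by mutating Kapranov's collection on the base (using Kapranov's staircase resolution of $L_{\mu^T}Q$ by the $L_\alpha U$ with $\alpha\subsetneq\mu$) so that the part destined for pullback consists of $\{L_{\mu^T}Q\}$ rather than $\{L_\mu U\}$. The key criterion is: $\langle\pi^*E,\pi^*F\rangle$ is semiorthogonal on $\Tot(U)$ if and only if $\Hom_{\Gr(k,V)}(F\otimes\Lambda^mQ,\,E)=0$ for all $m$; this follows because $\pi_*\O_{\Tot(U)}=\bigoplus_m S^mU^{\scriptscriptstyle\vee}$ and the subcategory generated by the $S^mU^{\scriptscriptstyle\vee}$ coincides with the one generated by the $\Lambda^mQ^{\scriptscriptstyle\vee}$ (Koszul complexes of $0\to U\to V\otimes\O\to Q\to0$). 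Pieri's formula then shows that $L_{\mu^T}Q\otimes\Lambda^mQ$ only adds boxes to columns of height $\le k-1$, so every summand stays in the box and contains $\mu^T$; semiorthogonality, and the computation $\Hom_{\Tot(U)}(\pi^*L_{\mu^T}Q,\pi^*L_{\mu^T}Q)\caniso S^\bullet V^{\scriptscriptstyle\vee}=\k[\A(V)]$ giving full faithfulness, follow uniformly in $k$ with no new construction for $k\ge3$. Your diagnosis that $\pi^*\Sigma^\beta U^{\scriptscriptstyle\vee}$ fails is correct, but the cure is a mutation on the base, not a deformation of the kernel, and without it your argument does not produce the decomposition.

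Secondarily, your generation step (descending induction on diagrams via the Koszul complex of the tautological section) is only sketched; an alternative that avoids it entirely is to take $T$ orthogonal to both blocks, observe that $\pi_*T$ lies in the span of the $j_*$-part while $j^*T$ lies in the span of the $\pi^*$-part, and use the triangle $\pi_*(T\otimes I)\to\pi_*T\to j^*T$ (with $I$ the ideal of the zero section, built from $\pi^*\Lambda^{>0}U^{\scriptscriptstyle\vee}$) together with the already-established vanishings to force $\pi_*T=0$, hence $T=0$ since $\pi$ is affine.
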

\addtocounter{theorem}{-1}
}
\noindent
See Section \ref{section on decomposition for ukn} for the precise descriptions of the components.

The decomposition is constructed from a (modified) Kapranov's collection on $\Gr(k, V)$ by pulling back some of the objects to the total space, and pushing forward the rest along the zero section.

The blow-up of a smooth subvariety in a smooth center \'etale-locally looks like the blow-up of an affine space along an affine subspace. Using the theory of relative semiorthogonal decompositions \cite{kuznetsov-basechange, conservative-descent} one may deduce the Orlov's result from the special case described above. Similarly, Theorem \ref{theorem on decomposition for ukn} may be considered as a local model for the following construction.

Let $X$ be a Cohen--Macaulay algebraic variety, $E$ a rank $n$ vector bundle on $X$ with a regular section $s$ cutting out a subvariety $Z \subset X$. Consider the closed subvariety $\Gr_s(k, E)$ of the relative Grassmannian consisting of those subspaces which contain the value of $s$. In the special case when $X = \A^n$, $E = \O^{\oplus n}$, and $s$ is the tautological section, this construction outputs precisely $\Tot_{\Gr(k, n)}(U)$ mapping to the base $\A^n$.

{
\hypersetup{hidelinks}
\renewcommand{\thetheorem}{\ref{theorem on relative decomposition for grs}}
\begin{theorem}
There exists a semiorthogonal decomposition:
\[
\Dbcoh(\Gr_s(k, E)) \, \caniso \, \left\langle \,\, \textstyle\binom{n-1}{k} \, \mathrm{copies \, of}\, \Dbcoh(Z), \,\,  \textstyle\binom{n-1}{k-1} \, \mathrm{copies \, of} \, \Dbcoh(X)  \,\, \right\rangle .
\]
\end{theorem}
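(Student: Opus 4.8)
The plan is to identify $\Gr_s(k, E)$ with a zero locus on the relative Grassmannian, check that the hypotheses make that zero locus as well-behaved as the local model, and then obtain the decomposition by transporting Theorem~\ref{theorem on decomposition for ukn} across a Tor-independent base change and gluing the result by descent along a cover of $X$. Concretely, I would set up the geometry as follows. Let $\pi\colon G = \Gr(k, E) \to X$ be the relative Grassmannian with its tautological sequence $0 \to \mathcal{U} \to \pi^* E \to \mathcal{Q} \to 0$, so $\mathcal{Q}$ has rank $n - k$. The section $s$ induces $\bar{s}\colon \O_G \xrightarrow{\pi^* s} \pi^* E \twoheadrightarrow \mathcal{Q}$, and by construction $W := \Gr_s(k, E)$ is exactly its zero scheme $Z(\bar s) \subset G$. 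The first claim is that, since $s$ is a regular section and $X$ is Cohen--Macaulay, $W$ is Cohen--Macaulay of pure codimension $n-k$ in $G$ --- i.e.\ $\bar s$ is a regular section of $\mathcal{Q}$ --- so that $\O_W$ has the Koszul resolution $[\wedge^{n-k}\mathcal{Q}^\dual \to \dots \to \mathcal{Q}^\dual \to \O_G]$. This is a codimension count fibered over $X$: over $X \setminus Z$ the fibre of $W$ is $\Gr(k-1, n-1)$, while over $Z$ (where $s$ vanishes) it is all of $\Gr(k, E|_Z) = \Gr(k, N_{Z/X}) =: \mathcal{E}$; since $\codim_X Z = n$, every irreducible closed subset of $W$ has dimension at most $\dim X + (k-1)(n-k) = \dim G - (n-k)$, and the Cohen--Macaulay ``miracle'' --- $c$ equations cutting codimension $c$ out of a Cohen--Macaulay scheme form a regular sequence --- finishes the claim. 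Here $\mathcal{E} \hookrightarrow W$ is a Grassmann bundle over $Z$ playing the role of Orlov's exceptional divisor (for $k = 1$ it is one).

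Next I would localize over $X$. On any open $X_0 \subseteq X$ on which $E$ is trivial, a trivialization $E|_{X_0} \iso V \otimes \O_{X_0}$ presents $s$ as a regular sequence $(f_1, \dots, f_n)$ in $\O_{X_0}$, hence a morphism $g\colon X_0 \to \A(V)$ with $g^{-1}(0) = Z \cap X_0$, and identifies $W|_{X_0}$ with the classical fibre product $X_0 \times_{\A(V)} \Tot(U)$ over the morphism $\Tot(U) \to \A(V)$ of the local model. The crucial point is that this is also the \emph{derived} fibre product: the square formed by $g$ and $\Tot(U) \to \A(V)$ is Tor-independent. I would verify this on the standard affine charts of $\Gr(k, V)$, where it becomes the statement that $n-k$ explicit elements --- linear in the fibre coordinates with coefficients among the $f_i$ --- form a regular sequence in a polynomial ring over $\O_{X_0}$; this is, once more, a Cohen--Macaulay codimension count, and amounts to the regularity of $f_1, \dots, f_n$. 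I would also record two companion facts, likewise equivalent to the regularity of $s$: the square $X_0 \xrightarrow{g} \A(V) \hookleftarrow \{0\}$ is Tor-independent --- this is literally the exactness of the Koszul complex of $(f_1, \dots, f_n)$ --- so that $\{0\} \times^{\mathrm{L}}_{\A(V)} X_0 = Z \cap X_0$ with no derived structure, and trivially $\A(V) \times^{\mathrm{L}}_{\A(V)} X_0 = X_0$.

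Now I would transport Theorem~\ref{theorem on decomposition for ukn}. From its construction --- pullbacks along $\Tot(U) \to \Gr(k, V)$ and pushforwards along the zero section --- the decomposition there is $\A(V)$-linear: the $\binom{n-1}{k}$ copies of $D^b(\mathrm{Vect})$ are generated by objects supported on the zero section, hence over $0 \in \A(V)$, and so are copies of $\Dbcoh(\{0\})$ for the reduced point $\{0\}$ (the zero locus of the tautological section), and the $\binom{n-1}{k-1}$ copies of $\Dbcoh(\A(V))$ are images of $\A(V)$-linear functors built from $\Tot(U) \to \A(V)$, $\Tot(U) \to \Gr(k, V)$, and the tautological bundle. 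Applying Kuznetsov's base-change theorem~\cite{kuznetsov-basechange} to the Tor-independent square $W|_{X_0} = X_0 \times^{\mathrm{L}}_{\A(V)} \Tot(U)$ gives a semiorthogonal decomposition of $\Dbcoh(W|_{X_0})$ into $\binom{n-1}{k}$ copies of $\Dbcoh(\{0\} \times^{\mathrm{L}}_{\A(V)} X_0) = \Dbcoh(Z \cap X_0)$ and $\binom{n-1}{k-1}$ copies of $\Dbcoh(X_0)$, with embedding functors the base changes of those on the model. Chasing these base changes shows that they are the restrictions to $X_0$ of \emph{globally} defined $X$-linear functors on $W$: schematically $F \mapsto p^* F \otimes \Sigma^\lambda \mathcal{U}^\dual$ for the copies of $\Dbcoh(X)$, where $p\colon W \to X$, and $F \mapsto j_*\bigl(\rho^* F \otimes \Sigma^\mu \mathcal{U}^\dual|_{\mathcal{E}}\bigr)$ for the copies of $\Dbcoh(Z)$, where $j\colon \mathcal{E} \hookrightarrow W$ and $\rho\colon \mathcal{E} \to Z$, with $\lambda$ and $\mu$ running over the same two index sets --- a partition of the Young diagrams in the $k \times (n-k)$ box into subsets of sizes $\binom{n-1}{k-1}$ and $\binom{n-1}{k}$ --- as in Theorem~\ref{theorem on decomposition for ukn}. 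Thus one has a fixed ordered family of $X$-linear admissible subcategories of $\Dbcoh(W)$ whose restriction to each member of the Zariski cover $\{X_0\}$ of $X$ is a semiorthogonal decomposition; since the subcategories are defined globally there is nothing to match on overlaps, and the descent of semiorthogonal decompositions along such covers~\cite{conservative-descent} shows that they form a semiorthogonal decomposition of $\Dbcoh(W) = \Dbcoh(\Gr_s(k, E))$, which is the assertion.

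The heart of the matter --- and the only place the Cohen--Macaulay and regularity hypotheses are used --- is the pair of regular-sequence statements above: that $W$ has the expected codimension $n-k$ in $G$, and that the defining square over each $X_0$ is Tor-independent, so that Kuznetsov's base change applies with the local model of Theorem~\ref{theorem on decomposition for ukn} taken literally, neither $W|_{X_0}$ nor the two ``ends'' $Z \cap X_0$ and $X_0$ acquiring any derived structure. (By contrast, $\{0\} \hookrightarrow \A(V) \leftarrow \Tot(U)$ is \emph{not} Tor-independent when $k \geq 1$, which is why one cannot simply base-change to the origin.) The remaining work --- matching the base changes of the embedding functors of Theorem~\ref{theorem on decomposition for ukn} with the global formulas above, so that \cite{conservative-descent} applies --- is routine bookkeeping, but it does rely on the explicit form of the local decomposition.
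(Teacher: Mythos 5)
Your proposal is correct and follows essentially the same route as the paper: establish that $\tilde{s}$ is a regular section by a Cohen--Macaulay codimension count, trivialize $E$ locally so that the regular sequence $(f_1,\dots,f_n)$ gives a morphism to $\A^n$ identifying $\Gr_s(k,E)$ locally with a base change of the model of Theorem \ref{theorem on decomposition for ukn}, promote that theorem to a relative semiorthogonal decomposition over $\A(V)$, and conclude by base change plus faithfully flat descent along a Zariski cover. The one place you diverge is the justification of the base-change step: the paper arranges for the comparison maps $W_z \to \A^n$ to be \emph{flat} (shrinking around points of $Z$ via the local flatness criterion for regular sequences and openness of the flat locus) and then covers $X \setminus Z$ by a separate open, over which the decomposition degenerates to a relative Kapranov collection on $\Gr(k-1, E/\langle s\rangle)$; you instead work over arbitrary trivializing opens and verify only Tor-independence of the relevant squares, invoking Kuznetsov's base change in its general ``faithful'' rather than flat form. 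That is workable and arguably more uniform (your Tor-independence checks are correct even where $X_0 \to \A^n$ fails to be flat away from $Z$), but note the paper's own caveat: its packaging of relative decompositions requires certain projection morphisms to be locally complete intersections, a condition not obviously preserved under non-flat base change, so you must verify it directly on the globally defined data --- which your opening codimension computations and the analogue of Lemma \ref{maps in grs are lci} do supply.
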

\addtocounter{theorem}{-1}
}
\noindent
See Section \ref{section on relative decomposition for grs} for the precise descriptions of the components.

The derived categories of coherent sheaves on total spaces have been studied before for various vector bundles on Grassmannians, but usually in terms of tilting vector bundles rather than semiorthogonal decompositions. Recall that a vector bundle (often given by a direct sum of simpler bundles) is called \emph{tilting} if it generates the derived category and has no higher Ext's to itself. In \cite[Cor.~7.4]{TU-total} a tilting vector bundle on $\Tot(T^*_{\Gr(2, 4)})$ has been constructed. The paper \cite[Prop.~C.8]{donovan-total} demonstrates a tilting generator on $\Tot(V^\dual \otimes U)$. Also, the total space of the cotangent bundle is related to the Springer resolution of the nilpotent cone. In this context Bezrukavnikov \cite{bezrukavnikov-total} has constructed an exotic $t$-structure in $\Dbcoh(\Tot(T^*_{\Gr(k, V)}))$.

The paper is structured as follows. Section \ref{section on preliminaries} contains the basic facts we use. Section \ref{section on decomposition for ukn} contains the proof of the main Theorem \ref{theorem on decomposition for ukn}, and a simple, but very important for this paper Lemma \ref{semiorthogonality on total space}, which is a criterion of semiorthogonality for objects of a special kind on the total space of $U$. The final Section \ref{section on relative decomposition for grs} proves the global version, Theorem \ref{theorem on relative decomposition for grs}.

\subsection*{Acknowledgements} The author would like to thank Aise Johan de Jong and Alexander Kuznetsov for invaluable advice, and Raymond Cheng and Alexander Perry for interesting conversations and suggestions.


\section{Preliminaries}
\label{section on preliminaries}

\subsection{Notation}
We work over a field $\k$ of characteristic zero. All varieties in this paper are assumed to be quasiprojective over $\k$, all triangulated categories to be $\k$-linear.
All functors are assumed to be derived, with the exception of the global sections functor $\Gamma$. The (hyper)cohomology functor is always denoted by $\RGamma$\!. 

For a variety $X$ the triangulated category of perfect complexes is denoted by $\Dperf(X)$, the bounded derived category of coherent sheaves by $\Dbcoh(X)$, and the unbounded derived category of quasicoherent sheaves by $\Dqcoh(X)$.

For a vector bundle $U$ on $X$ we use $\Tot(U)$ to refer to the total space of $U$, i.e.,~$\Spec_{\!X} S^\bullet U^\dual$. For a vector space $V$ over $\k$ we use the symbol $\A(V)$ to denote the associated affine space, i.e.,~$\A(V) := \Spec S^\bullet V^\dual$.

\subsection{Derived categories of coherent sheaves}
\label{prelim on derived categories}

Let $T$ be a triangulated category. In this subsection $\mathrm{Hom}^\bullet$ refers to the graded \mbox{Hom-space}. We recall a few standard notions, see, e.g., \cite{bondal-exceptional} for details. An object $E \in T$ is called \emph{exceptional} if $\mathrm{Hom}^\bullet(E, E) = \k \cdot \mathrm{id}_E$ and for any object $A \in T$ both $\mathrm{Hom}^\bullet(A, E)$ and $\mathrm{Hom}^\bullet(E, A)$ are finite-dimensional. The usual definition omits the latter requirement as it is redundant when $T \iso \Dbcoh(X)$ for a smooth proper variety $X$, but we need the non-proper case as well.

A sequence of objects $A_1, \ldots, A_n \in T$ is \emph{semiorthogonal} if $\mathrm{Hom}^\bullet(A_j, A_i) = 0$ for any pair of indices $j > i$. If all $A_i$'s are exceptional, the sequence is called an \emph{exceptional collection}. An exceptional collection is \emph{strong} if $\mathrm{Hom}^{\bullet}(A_i, A_j) = \mathrm{Hom}^0(A_i, A_j)$ for any pair of objects. An exceptional collection is \emph{full} if for any object $B \in T$ there is at least one $A_i$ from the collection such that $\mathrm{Hom}^\bullet(B, A_i) \neq 0$.

A sequence of triangulated subcategories $\mathcal{A}_1, \ldots, \mathcal{A}_n \subset T$ is \emph{semiorthogonal} if any pair of objects $A_i \in \mathcal{A}_i$, $A_j \in \mathcal{A}_j$ is semiorthogonal whenever $j > i$. A triangulated subcategory $\mathcal{A} \subset T$ is \emph{admissible} if the inclusion functor has both left and right adjoints. Given a sequence of objects $A_1, \ldots, A_n$ the symbol $\langle \, A_1, \ldots, A_n \, \rangle$ denotes the smallest triangulated subcategory of $T$ containing all $A_i$ and closed under taking direct summands.

\begin{lemma}[{\cite[Th.~3.2]{bondal-exceptional}}]
\label{exceptional objects generate admissible subcategories}
Let $E_1, \ldots E_n \in T$ be an exceptional collection. Then $\langle \, E_1, \ldots, E_n \, \rangle$ is an admissible subcategory of $T$.
\end{lemma}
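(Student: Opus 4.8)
The plan is to reduce the statement to the case of a single exceptional object, settle that case by writing the two adjoints down by hand, and then reassemble the general collection by induction.

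\emph{Step 1 (one object).} For $n = 1$ write $E = E_1$. Since $\mathrm{Hom}^\bullet(E, E) = \k$, the operation $W^\bullet \mapsto W^\bullet \otimes_\k E$ turns a finite-dimensional graded vector space into a finite direct sum of shifts of $E$, and $\langle E \rangle$ is precisely the collection of objects obtained this way. I would then exhibit both adjoints of the inclusion $\langle E \rangle \hookrightarrow T$ explicitly. For $A \in T$ the two finiteness clauses in the definition of ``exceptional'' guarantee that $\mathrm{Hom}^\bullet(E, A)$ and $\mathrm{Hom}^\bullet(A, E)$ are finite-dimensional, so $\mathrm{Hom}^\bullet(E, A) \otimes_\k E$ and $\mathrm{Hom}^\bullet(A, E)^\dual \otimes_\k E$ really are objects of $\langle E \rangle$. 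The evaluation morphism $\mathrm{Hom}^\bullet(E, A) \otimes_\k E \to A$ induces an isomorphism on $\mathrm{Hom}^\bullet(E, -)$, so the third term $C$ of the triangle $\mathrm{Hom}^\bullet(E, A) \otimes_\k E \to A \to C$ satisfies $\mathrm{Hom}^\bullet(E, C) = 0$, i.e.~$C \in \langle E \rangle^\perp$; this triangle identifies $A \mapsto \mathrm{Hom}^\bullet(E, A) \otimes_\k E$ as the right adjoint of the inclusion. Dually, the coevaluation $A \to \mathrm{Hom}^\bullet(A, E)^\dual \otimes_\k E$ is an isomorphism on $\mathrm{Hom}^\bullet(-, E)$, so its fiber lies in ${}^\perp\langle E \rangle$ and $A \mapsto \mathrm{Hom}^\bullet(A, E)^\dual \otimes_\k E$ is the left adjoint. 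Hence $\langle E \rangle$ is admissible.

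\emph{Step 2 (gluing two admissible subcategories).} Next I would prove the formal fact that if $\mathcal{A}_1, \mathcal{A}_2 \subset T$ are admissible and $\mathrm{Hom}^\bullet(\mathcal{A}_2, \mathcal{A}_1) = 0$, then $\langle \mathcal{A}_1, \mathcal{A}_2 \rangle$ is admissible. For the right adjoint I need, for each $A \in T$, a triangle $c \to A \to c'$ with $c \in \langle \mathcal{A}_1, \mathcal{A}_2 \rangle$ and $c' \in \mathcal{A}_1^\perp \cap \mathcal{A}_2^\perp$: first split $A$ against $\mathcal{A}_2$ as $x_2 \to A \to y$ with $x_2 \in \mathcal{A}_2$, $y \in \mathcal{A}_2^\perp$; then split $y$ against $\mathcal{A}_1$ as $x_1 \to y \to c'$ with $x_1 \in \mathcal{A}_1$, $c' \in \mathcal{A}_1^\perp$. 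The hypothesis gives $\mathcal{A}_1 \subset \mathcal{A}_2^\perp$, so $c'$ lies in a triangle with $y$ and a shift of $x_1$, both in $\mathcal{A}_2^\perp$, hence $c' \in \mathcal{A}_2^\perp$ as well; and the octahedral axiom produces a triangle $c \to A \to c'$ in which $c$ lies in a triangle with $x_1$ and $x_2$, so $c \in \langle \mathcal{A}_1, \mathcal{A}_2 \rangle$. The mirror-image argument, using the left adjoints and splitting off $\mathcal{A}_1$ first, gives the decomposition needed for the left adjoint. (Closure of $\langle \mathcal{A}_1, \mathcal{A}_2 \rangle$ under direct summands is automatic, as for any subcategory admitting such two-sided decompositions.)

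\emph{Step 3 (induction).} Finally I would induct on $n$; the cases $n = 0, 1$ are trivial and Step 1. For $n > 1$, the subcategory $\mathcal{A}_1 := \langle E_1, \ldots, E_{n-1} \rangle$ is admissible by the inductive hypothesis and $\mathcal{A}_2 := \langle E_n \rangle$ is admissible by Step 1. Since $\mathrm{Hom}^\bullet(E_n, E_i) = 0$ for $i < n$ and the vanishing of $\mathrm{Hom}^\bullet(E_n, -)$ is inherited by cones and direct summands, we get $\mathrm{Hom}^\bullet(\mathcal{A}_2, \mathcal{A}_1) = 0$, so Step 2 shows $\langle E_1, \ldots, E_n \rangle = \langle \mathcal{A}_1, \mathcal{A}_2 \rangle$ is admissible.

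\emph{Main obstacle.} The only real content sits in Step 1; Steps 2 and 3 are the standard calculus of semiorthogonal decompositions. Within Step 1 the delicate point — and precisely the reason the finiteness conditions were built into the definition of ``exceptional'' in this non-proper setting — is that $\mathrm{Hom}^\bullet(E, A)$ and $\mathrm{Hom}^\bullet(A, E)$ must be finite-dimensional for the evaluation and coevaluation objects to actually lie in $\langle E \rangle$; without this the projection functors need not exist.
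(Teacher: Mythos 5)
Your proposal is correct, and it is essentially the argument of Bondal's Theorem 3.2, which the paper cites for this lemma without reproducing a proof: one exhibits the left and right adjoints for a single exceptional object via the (co)evaluation triangles and then glues semiorthogonal admissible subcategories by the octahedral axiom. You also correctly pinpoint why the paper builds the finiteness of $\mathrm{Hom}^\bullet(E,A)$ and $\mathrm{Hom}^\bullet(A,E)$ into its definition of exceptional object: in the non-proper setting this is exactly what makes the (co)evaluation objects land in $\langle E \rangle$.
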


\begin{definition}[\cite{gorodentsev89}]
Let $\langle A, B \rangle$ be a semiorthogonal pair of exceptional objects. The \emph{right mutation of $A$ through $B$} is the fiber $A'$ of the coevaluation map,
\[
A' := \mathrm{Cone}(A \to \mathrm{Hom}^\bullet(A, B)^\dual \otimes B)[-1] .
\]
The \emph{mutation of the pair $\langle A, B \rangle$ to the right} is the pair $\langle B, A' \rangle$. The mutation of the pair to the left is defined analogously.
\end{definition}

\begin{proposition}[\cite{gorodentsev89}]
If $\langle A, B \rangle$ is a semiorthogonal pair of exceptional objects, then its mutation to the right $\langle B, A' \rangle$ is also a semiorthogonal pair of exceptional objects.  
\end{proposition}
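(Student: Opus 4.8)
The plan is to verify the two nontrivial assertions: that $A'$ is exceptional, and that the pair $\langle B, A' \rangle$ is semiorthogonal, i.e.~$\mathrm{Hom}^\bullet(A', B) = 0$ (that $B$ is exceptional is part of the hypothesis). Everything will come out of a few long exact sequences obtained by applying $\mathrm{Hom}^\bullet$ to the defining triangle
\[
A' \to A \to \mathrm{Hom}^\bullet(A, B)^\dual \otimes B \to A'[1],
\]
which I abbreviate as $A' \to A \xrightarrow{c} C \to A'[1]$ with $C := \mathrm{Hom}^\bullet(A, B)^\dual \otimes B$ and $c$ the coevaluation.

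First I would establish the semiorthogonality. Applying $\mathrm{Hom}^\bullet(-, B)$ to the triangle produces a long exact sequence containing the map $c^* \colon \mathrm{Hom}^\bullet(C, B) \to \mathrm{Hom}^\bullet(A, B)$. Since $B$ is exceptional, the canonical identifications give $\mathrm{Hom}^\bullet(C, B) = \mathrm{Hom}^\bullet(A, B) \otimes \mathrm{Hom}^\bullet(B, B) \caniso \mathrm{Hom}^\bullet(A, B)$, and under this identification $c^*$ is the identity. Being an isomorphism in every degree, $c^*$ kills the neighbouring terms of the long exact sequence, so $\mathrm{Hom}^\bullet(A', B) = 0$.

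Next, $A'$ is exceptional. Finiteness is immediate: as $A$ (equivalently $B$) is exceptional, $\mathrm{Hom}^\bullet(A, B)$ is finite-dimensional, hence $C$ is a finite direct sum of shifts of $B$, and the long exact sequences from the triangle sandwich $\mathrm{Hom}^\bullet(X, A')$ and $\mathrm{Hom}^\bullet(A', X)$ between finite-dimensional graded spaces for every $X \in T$. For $\mathrm{Hom}^\bullet(A', A')$, I would chain two computations. Applying $\mathrm{Hom}^\bullet(A', -)$ to the triangle, the term $\mathrm{Hom}^\bullet(A', C) = \mathrm{Hom}^\bullet(A, B)^\dual \otimes \mathrm{Hom}^\bullet(A', B)$ vanishes by the previous paragraph, giving $\mathrm{Hom}^\bullet(A', A') \isoarrow \mathrm{Hom}^\bullet(A', A)$. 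Applying $\mathrm{Hom}^\bullet(-, A)$ to the triangle, the term $\mathrm{Hom}^\bullet(C, A) = \mathrm{Hom}^\bullet(A, B) \otimes \mathrm{Hom}^\bullet(B, A)$ vanishes because $\langle A, B \rangle$ is semiorthogonal, giving $\mathrm{Hom}^\bullet(A, A) \isoarrow \mathrm{Hom}^\bullet(A', A)$. Hence $\mathrm{Hom}^\bullet(A', A') \caniso \mathrm{Hom}^\bullet(A, A) = \k$ as graded vector spaces; in particular $A' \neq 0$, so $\mathrm{id}_{A'}$ is a nonzero element of the one-dimensional space $\mathrm{Hom}^0(A', A')$ and spans it, which is exactly exceptionality of $A'$.

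I do not anticipate a real obstacle: the whole argument is a diagram chase through the defining triangle. The only points demanding care are the bookkeeping of shifts and arrow directions in the long exact sequences, and the verification that the coevaluation $c$ induces, after the canonical identifications, an isomorphism on $\mathrm{Hom}^\bullet(-, B)$ — this last fact is the crux of the semiorthogonality step and is precisely the sense in which the mutation is built from the evaluation pairing, so I would state it carefully rather than treat it as obvious.
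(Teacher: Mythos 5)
The paper does not prove this proposition — it is cited from Gorodentsev — and your argument is the standard one and is correct: the only real content is that the coevaluation induces an isomorphism $\mathrm{Hom}^\bullet(C,B)\to\mathrm{Hom}^\bullet(A,B)$ (the triangle identity for the evaluation pairing, using $\mathrm{Hom}^\bullet(B,B)=\k$), and you correctly identify this as the crux and also handle the finiteness clause that the paper's nonstandard definition of exceptionality requires. No gaps.
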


\begin{definition}
Let $\langle E_1, \ldots, E_k \rangle$ be an exceptional collection. A \emph{rotation of the collection to the right} is an exceptional collection $\langle \, E_k^\prime, E_1, E_2, \ldots, E_{k-1} \, \rangle$ constructed by a sequence of left mutations of $E_k$ through $E_{k-1}$, \ldots, $E_1$. By an abuse of notation we also use this term for exceptional collections $\langle \, E_k^\prime[i], E_1, \ldots, E_{k-1} \, \rangle$ for any shift $i \in \Z$. A rotation of the collection to the left is defined analogously.
\end{definition}

The term mutation will refer to either a mutation to the left or to the right depending on the context.

%

%

The following lemma is used in the proof of the Theorem \ref{theorem on decomposition for ukn}.


\begin{lemma}
\label{adjoints for proper maps of smooth varieties}
Let $f\colon Y \to X$ be a proper map of smooth varieties. Let $\omega_f = \omega_Y \otimes f^*\omega_X^\dual$ be the relative dualizing line bundle on $Y$. Then the functor $f_!(-) := f_*( (-) \otimes \omega_f[\dim(Y) - \dim(X)])$ is a left adjoint to $f^*$.
\end{lemma}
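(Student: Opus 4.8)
The plan is to invoke Grothendieck--Serre duality to identify the right adjoint of $f_*$ and then dualize to extract a left adjoint of $f^*$. First I would recall that for a proper morphism $f\colon Y \to X$ of smooth varieties the functor $f_* \colon \Dbcoh(Y) \to \Dbcoh(X)$ has a right adjoint $f^!$, and that since both varieties are smooth (so in particular Gorenstein, with dualizing complexes given by shifted line bundles) the relative dualizing complex is $\omega_f[\dim Y - \dim X]$ with $\omega_f = \omega_Y \otimes f^*\omega_X^\dual$; thus $f^!(-) \iso f^*(-) \otimes \omega_f[\dim Y - \dim X]$. In particular $f^!$ differs from $f^*$ only by tensoring with an invertible object and a shift, both of which are autoequivalences of $\Dbcoh(Y)$.

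Next I would run the standard adjunction bookkeeping. From the adjunction $(f_*, f^!)$ we have, for $A \in \Dbcoh(Y)$ and $B \in \Dbcoh(X)$,
\[
\Hom_Y(f^*B, A) \caniso \Hom_Y\bigl(f^!B \otimes \omega_f^\dual[\dim X - \dim Y], A\bigr) \caniso \Hom_Y\bigl(f^!B, A \otimes \omega_f[\dim Y - \dim X]\bigr),
\]
using that $(-)\otimes \omega_f[\dim Y - \dim X]$ is an autoequivalence. The right-hand side is $\Hom_X\bigl(B, f_*(A \otimes \omega_f[\dim Y - \dim X])\bigr) = \Hom_X(B, f_! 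A)$ by definition of $f_!$. Chasing that these isomorphisms are natural in both arguments (they are, being composites of the duality adjunction and tensoring by a fixed invertible complex) gives $\Hom_Y(f^*B, A) \caniso \Hom_X(B, f_! A)$ functorially, i.e.\ $f_!$ is left adjoint to $f^*$.

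The one genuine point to check — and the only place any hypothesis beyond formal nonsense is used — is the existence of $f^!$ together with the explicit shape $f^!(-) \iso f^*(-) \otimes \omega_f[\dim Y - \dim X]$ on the bounded coherent category. Existence of the right adjoint to $f_*$ at the level of $\Dqcoh$ is Neeman's Brown representability argument; that it preserves $\Dbcoh$ and has the stated form for $f$ proper between smooth (finite Tor-dimension, Gorenstein) varieties is the content of Grothendieck duality in the form found in Hartshorne's \emph{Residues and Duality} or Conrad's notes, and boils down to: $\omega_f$ is a line bundle in this setting, $f$ has finite Tor-dimension so $f^!$ commutes with the relevant operations, and the dualizing complex of a smooth variety of dimension $d$ is $\omega[d]$. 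Once this input is granted the rest is the formal dualization above, so I expect no real obstacle beyond citing the duality package correctly.
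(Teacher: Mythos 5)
Your overall strategy --- Grothendieck duality in the form $f^!(-) \caniso f^*(-) \otimes \omega_f[\dim Y - \dim X]$ together with the observation that tensoring by the invertible object $\omega_f[\dim Y - \dim X]$ is an autoequivalence --- is exactly the paper's, and the duality input you cite is the right one. But the adjunction bookkeeping in your displayed chain is wrong. The adjunction $(f_*, f^!)$ reads $\Hom_Y(C, f^!B) \caniso \Hom_X(f_*C, B)$: the object $f^!B$ must sit in the \emph{second} slot of the Hom on $Y$ and $f_*C$ in the \emph{first} slot on $X$. You instead pass from $\Hom_Y(f^!B, \, A \otimes \omega_f[\dim Y - \dim X])$ to $\Hom_X(B, \, f_*(A \otimes \omega_f[\dim Y - \dim X]))$, which amounts to using ``$f^!$ is left adjoint to $f_*$''; that is false in general (the left adjoint of $f_*$ is $f^*$, and $f^! \not\caniso f^*$ unless $\omega_f[\dim Y - \dim X]$ is trivial). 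Relatedly, even the identity you are aiming for, $\Hom_Y(f^*B, A) \caniso \Hom_X(B, f_!A)$, is the wrong one: if it held naturally it would exhibit $f_!$ as a \emph{right} adjoint of $f^*$, whereas the lemma asserts it is a left adjoint, i.e.\ $\Hom_X(f_!A, B) \caniso \Hom_Y(A, f^*B)$. (Indeed, the ordinary adjunction already gives $\Hom_Y(f^*B, A) \caniso \Hom_X(B, f_*A)$, so your claimed isomorphism would force $f_! \caniso f_*$.)

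The repair is to run the chain from the other side, which is precisely what the paper does: writing $d = \dim Y - \dim X$, one has $\Hom_Y(A, f^*B) \caniso \Hom_Y(A \otimes \omega_f[d], \, f^*B \otimes \omega_f[d]) \caniso \Hom_Y(A \otimes \omega_f[d], \, f^!B) \caniso \Hom_X(f_*(A \otimes \omega_f[d]), B) \caniso \Hom_X(f_!A, B)$, where the third isomorphism is the duality adjunction applied with $A \otimes \omega_f[d]$ in the covariant slot. With that correction your proof coincides with the paper's.
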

\begin{proof}
By, e.g., \cite[Th.~3.34]{HuybFM} the functor $f^!(-) \caniso f^*(-) \otimes \omega_f[\dim(Y) - \dim(X)]$ is a right adjoint to $f_*$. The tensor multiplication by $\omega_f[\dim(Y) - \dim(X)]$ is an autoequivalence of $\Dbcoh(Y)$, so for any $A \in \Dbcoh(Y)$ and $B \in \Dbcoh(X)$ we have
\[
\begin{aligned}
\MoveEqLeft
\Hom_Y(A, f^*B) \caniso \Hom_Y(A \otimes \omega_f[\dim(Y) - \dim(X)], f^!B) \caniso  \\
& \caniso \Hom_X(f_*(A \otimes \omega_f[\dim(Y) - \dim(X)]), B) \caniso \Hom_X(f_!(A), B).
\end{aligned}
\]
Thus the functors $f_!$ and $f^*$ are adjoint to each other.
\end{proof}

\subsection{Kapranov's exceptional collection}
\label{prelim on kapranov}

Fix a vector space $V$ of dimension $n$. The main object of interest of this section is the Grassmannian $\Gr(k, V)$ of $k$-dimensional subspaces. Until the end of the paper, $U$ refers to the universal subbundle on $\Gr(k, V)$, and $Q$ to the universal quotient bundle.

The bounded derived category of coherent sheaves on $\Gr(k, V)$ has been studied by Kapranov in \cite{kapranov84} and \cite{kapranov88}. He constructed a full exceptional collection of vector bundles in this category. Along the way, Kapranov established several useful lemmas, some of which we cite separately in order to clarify the proof of Theorem \ref{theorem on decomposition for ukn}.

For each nonincreasing sequence of integers $\lambda = (\, \lambda_1 \geq \ldots \geq \lambda_k \, )$ let $L_{\lambda}(U)$ be the vector bundle on $\Gr(k, V)$ obtained by the application of the corresponding Schur functor (see, e.g., \cite{weyman}) to $U$. The sequence $\lambda$ can be thought of as a weight of $\GL(k)$. When all $\lambda_i$'s are nonnegative, we associate to $\lambda$ a Young diagram with $i$'th row of length $\lambda_i$. Our convention for Schur functors is that the vertical Young diagrams correspond to exterior powers of $U$. 

\begin{proposition}[{Pieri's formula; see, e.g., \cite[Cor.~2.3.5]{weyman}}]
\label{pieri}
Let $\lambda$ be a Young diagram, $m$ an integer. Then
\[
L_{\lambda} U \otimes \Lambda^m U \iso \bigoplus_\mu L_{\mu} U
\]
where the sum is indexed by the set of Young diagrams $\mu$ such that $\mu_i \in [\lambda_i, \lambda_i + 1]$ for all $i$, and $|\mu| = |\lambda| + m$, i.e.~$\mu$ contains $\lambda$, has $m$ additional boxes, and each row has at most one new box.
\end{proposition}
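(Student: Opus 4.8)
The plan is to reduce the statement to the classical Pieri rule for symmetric functions; there is no geometric input beyond functoriality of Schur functors. Both sides of the claimed isomorphism are obtained by applying a polynomial functor of degree $|\lambda|+m$ to the rank-$k$ bundle $U$, functorially in $U$. Over a field of characteristic zero the category of polynomial functors of a fixed degree is semisimple with simple objects the Schur functors $L_\nu$, so it suffices to establish an isomorphism $L_\lambda(W)\otimes\Lambda^m(W)\iso\bigoplus_\mu L_\mu(W)$ of $\GL(W)$-representations for a single vector space $W$ of dimension $k$ (equivalently, as polynomial functors); applying the resulting decomposition fibrewise then yields the statement on $\Gr(k,V)$. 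Note that whenever $\mu$ has more than $k$ rows the Schur functor $L_\mu U$ vanishes, so such $\mu$ may be harmlessly kept in or dropped from the sum; in particular $m\le k$ is forced for the index set to be nonempty, matching $\Lambda^m U = 0$ for $m>\operatorname{rank}(U)$.

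To decompose the $\GL(W)$-representation $L_\lambda(W)\otimes\Lambda^m(W)$ I would compare characters, which determines a representation in characteristic zero. In the eigenvalues $x_1,\dots,x_k$ of a diagonalizable automorphism of $W$, the character of $L_\nu(W)$ is the Schur polynomial $s_\nu(x)$ and the character of $\Lambda^m(W)$ is the elementary symmetric polynomial $e_m(x)$, so the statement becomes the polynomial identity
\[
s_\lambda(x)\,e_m(x) \;=\; \sum_{\mu} s_\mu(x),
\]
the sum over diagrams $\mu$ with $\mu_i\in[\lambda_i,\lambda_i+1]$ for all $i$ and $|\mu|=|\lambda|+m$ (with the convention $s_\mu=0$ for $\mu$ having more than $k$ rows). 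This is the elementary-symmetric-function form of Pieri's rule.

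Finally I would prove this symmetric-function identity by one of the standard routes: a sign-reversing involution on pairs consisting of a semistandard tableau of shape $\lambda$ together with a strictly decreasing column of $m$ entries; or the dual Jacobi--Trudi (von Nägelsbach--Kostka) determinant $s_\nu=\det(e_{\nu'_i-i+j})$ combined with a Laplace expansion; or specialization of the Littlewood--Richardson rule to the factor $L_{(1^m)}$. The entire argument is classical and is precisely the content of \cite[Cor.~2.3.5]{weyman}. The only mild obstacle is bookkeeping: checking that the ``vertical strip'' condition $\mu_i\in[\lambda_i,\lambda_i+1]$ together with $|\mu|=|\lambda|+m$ is exactly the index set produced by Pieri's rule, and that the vanishing convention for diagrams with too many rows is consistent on both sides. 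Neither point presents a genuine difficulty, and the identity holds already at the level of polynomial functors, hence globally on $\Gr(k,V)$.
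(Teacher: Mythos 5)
Your argument is correct. Note that the paper does not prove this proposition at all: it is quoted as a classical fact with a pointer to \cite[Cor.~2.3.5]{weyman}, so there is no in-paper proof to compare against. Your reduction --- semisimplicity of polynomial $\GL(k)$-representations in characteristic zero, so that the bundle isomorphism follows from the character identity $s_\lambda\, e_m=\sum_\mu s_\mu$ over vertical strips, proved by dual Jacobi--Trudi or the Littlewood--Richardson rule --- is exactly the standard route taken in the cited reference, and it is consistent with the paper's convention that $L_{(1^m)}U=\Lambda^m U$. The only point worth stating explicitly is the one you already flag: an isomorphism of $\GL(k)$-representations transports to an isomorphism of the associated bundles via the frame bundle of $U$, and summands $L_\mu$ with more than $k$ rows vanish on a rank-$k$ bundle, so the index set may be taken either with or without them.
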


Let $B_{k, n-k}$ denote the set of Young diagrams with at most $k$ rows and at most $n-k$ columns. 

\begin{lemma}[{\cite[(3.5)]{kapranov88}}]
\label{kapranov semiorthogonality}
Let $\lambda$, $\mu$ be two diagrams in $B_{k, n-k}$. If $\lambda$ does not contain $\mu$, then
$
\Hom_{\Gr(k, V)}(L_{\lambda}(U), L_{\mu}(U)) = 0 \quad \text{and} \quad \Hom_{\Gr(k, V)}(L_{\mu^T}(Q), L_{\lambda^T}(Q)) = 0 \,\, .
$
\end{lemma}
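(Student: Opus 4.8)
The plan is to deduce both vanishings from the Borel--Weil--Bott theorem, and in fact only the first one requires a genuine argument. The standard isomorphism $\Gr(k, V) \iso \Gr(n-k, V^\dual)$ identifies $Q^\dual$ on the left with the universal subbundle $U'$ on the right, transposition $\lambda \mapsto \lambda^T$ is a bijection $B_{k, n-k} \isoarrow B_{n-k, k}$ with $\lambda \supseteq \mu \iff \lambda^T \supseteq \mu^T$, and $(\Sigma^\nu E)^\dual \iso \Sigma^\nu(E^\dual)$. Combining these with the tautological duality $\Hom(A,B) \iso \Hom(B^\dual, A^\dual)$ rewrites $\Hom_{\Gr(k,V)}(L_{\mu^T}(Q), L_{\lambda^T}(Q))$ as $\Hom_{\Gr(n-k,V^\dual)}(L_{\lambda^T}(U'), L_{\mu^T}(U'))$, which the first vanishing, applied on $\Gr(n-k, V^\dual)$, kills when $\lambda^T \not\supseteq \mu^T$, i.e.\ when $\lambda \not\supseteq \mu$. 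So it suffices to prove the first statement on an arbitrary Grassmannian.

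For the first statement I would compute $\Hom_{\Gr(k,V)}(L_\lambda(U), L_\mu(U)) \iso \RGamma(\Gr(k,V),\, L_\lambda(U)^\dual \otimes L_\mu(U))$. Using $L_\lambda(U)^\dual \iso L_{(-\lambda_k,\ldots,-\lambda_1)}(U) \iso L_{\tilde\lambda}(U) \otimes (\det U)^{\otimes(-\lambda_1)}$, where $\tilde\lambda := (\lambda_1-\lambda_k, \ldots, \lambda_1-\lambda_1)$ is the complement of $\lambda$ in the $k \times \lambda_1$ rectangle (a genuine partition), the Littlewood--Richardson rule --- or iterated use of Proposition~\ref{pieri} --- yields a decomposition $L_\lambda(U)^\dual \otimes L_\mu(U) \iso \bigoplus_\gamma L_\eta(U)^{\oplus c^\gamma_{\tilde\lambda\mu}}$, with $\gamma$ running over partitions with at most $k$ rows and $\eta := \gamma - (\lambda_1,\ldots,\lambda_1)$. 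Every occurring $\gamma$ satisfies $\mu \subseteq \gamma$, and the lattice condition on the first row of an LR tableau forces $\gamma_1 \le \mu_1 + \tilde\lambda_1$; since $\lambda, \mu \in B_{k, n-k}$ this confines the entries of $\eta$ to $[\mu_k - \lambda_1,\, \mu_1 - \lambda_k] \subseteq [-(n-k),\, n-k]$. I would then run Borel--Weil--Bott on each summand $L_\eta(U)$: padding $\eta$ by $n-k$ zeros and adding $\rho = (n-1, n-2, \ldots, 1, 0)$ produces a sequence of $n$ integers whose last $k$ entries $\eta_i + k - i$ are automatically $\le n-1$; it is singular exactly when some $\eta_i + k - i$ lies in $\{k, k+1, \ldots, n-1\}$, i.e.\ exactly when $\eta_i \ge i$ for some $i$, and when it is regular it is already dominant, so the cohomology sits in degree $0$. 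Hence $\Hom_{\Gr(k,V)}(L_\lambda(U), L_\mu(U)) = 0$ unless there is a partition $\gamma$ with $c^\gamma_{\tilde\lambda\mu} \ne 0$ and $\gamma_i \le \lambda_1 + i - 1$ for every $i = 1, \ldots, k$.

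The remaining combinatorial step --- and the real heart of the proof --- is to show that such a $\gamma$ can exist only when $\lambda \supseteq \mu$. Taking $i = 1$ gives $\gamma_1 \le \lambda_1$, so $\gamma$, $\mu$ and $\tilde\lambda$ all fit in the $k \times \lambda_1$ rectangle; write $(-)^\circ$ for complementation inside this rectangle followed by a $180^\circ$ rotation. Reading $c^\gamma_{\tilde\lambda\mu}$ as the triple intersection number $\int_{\Gr(k,\, \lambda_1+k)} \sigma_{\tilde\lambda}\, \sigma_{\mu}\, \sigma_{\gamma^\circ}$ --- which is symmetric in its three Schubert inputs --- together with $\tilde\lambda^\circ = \lambda$ gives $c^{\lambda}_{\gamma^\circ,\, \mu} = c^\gamma_{\tilde\lambda\mu} \ne 0$, so $L_\lambda(U)$ occurs in $L_{\gamma^\circ}(U) \otimes L_\mu(U)$, whence $\mu \subseteq \lambda$. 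The contrapositive is exactly the lemma. I expect the main obstacle to be purely organizational: one has to keep the several conventions (Schur functors, the Borel--Weil--Bott normalization on $\Gr(k,V)$, the direction of complementation) mutually consistent, but once the box-complementation identity for Littlewood--Richardson coefficients is available everything else is formal.
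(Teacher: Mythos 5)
Your proof is correct and takes essentially the same route as the paper's (sketched) argument: decompose $L_{\lambda}(U)^\dual \otimes L_{\mu}(U)$ by Littlewood--Richardson, kill the summands by Borel--Weil--Bott unless the weight is nonpositive, and then use a symmetry of Littlewood--Richardson coefficients to show that a surviving summand would force $\mu \subseteq \lambda$. The only cosmetic differences are that you phrase the last step via box-complementation of LR coefficients where the paper uses the tensor--Hom adjunction for $\GL(k)$-representations, and that you make explicit the reduction of the $Q$-statement to the $U$-statement and the weight bound $\eta_1 \le n-k$ needed for the Bott vanishing, both of which the paper leaves implicit.
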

\begin{proof}(sketch)
The tensor product $L_{\lambda} U^\dual \otimes L_{\mu} U$ is decomposed into a direct sum of Schur functors according to the Littlewood--Richardson rule. Then the Borel--Weil--Bott theorem computes the cohomology of those direct summands. It turns out that a summand $L_\eta U$ with weight $\eta$ can have nonvanishing cohomology only if each $\eta_i \leq 0$. Note that $L_\eta U$ is a direct summand of $L_\lambda U^\dual \otimes L_\mu U$ if and only if $L_\lambda U$ is a direct summand of $L_{\eta} U^\dual \otimes L_\mu U$, but for $\eta$ nonpositive this possibility is excluded by the Littlewood--Richardson formula. 
\end{proof}

\begin{theorem}[\cite{kapranov84}]
\label{kapranov full exceptional collection}
For any linear order on $B_{k, n-k}$ reversing inclusions the following sequence is a full strong exceptional collection on $\Gr(k, V)$:
\[
\Dbcoh(\Gr(k, V)) \caniso \langle \,\, \{ \, L_{\lambda}(U) \, \}_{\lambda \in B_{k, n-k}} \,\, \rangle .
\]
\end{theorem}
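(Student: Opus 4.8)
The plan is to verify separately the three assertions: that the bundles $L_\lambda(U)$ form a semiorthogonal (hence exceptional) collection in any inclusion-reversing order, that the collection is strong, and that it is full. The first two are cohomological and reduce to Borel--Weil--Bott together with the Littlewood--Richardson rule — much of which is already packaged in Lemma~\ref{kapranov semiorthogonality} — while fullness is geometric and goes through a resolution of the diagonal.

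For the exceptional-collection structure, recall that in the conventions of this paper $\Hom$ is the derived Hom, so Lemma~\ref{kapranov semiorthogonality} literally says $\Hom_{\Gr(k,V)}(L_\lambda U, L_\mu U) = 0$ whenever $\lambda$ does not contain $\mu$. If the linear order reverses inclusions, then for any pair $A_i = L_{\mu}U$, $A_j = L_{\lambda}U$ with $j > i$ the diagram $\lambda$ cannot contain $\mu$ (otherwise $\lambda$ would precede $\mu$), so the collection is semiorthogonal. To see that each $L_\lambda U$ is exceptional, decompose $L_\lambda U^\dual \otimes L_\lambda U$ into Schur functors $L_\eta U$ by Littlewood--Richardson; the Borel--Weil--Bott criterion recalled in the sketch of Lemma~\ref{kapranov semiorthogonality} shows that a summand can have nonzero cohomology only if every $\eta_i \le 0$, and since all such summands here satisfy $\sum_i \eta_i = 0$ this forces $\eta = 0$, which occurs with multiplicity one and contributes $\RGamma(\Gr(k,V), \O) = \k$ in degree $0$; the finiteness requirements in the definition of exceptionality are automatic as $\Gr(k,V)$ is smooth and proper. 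Strongness is the same computation applied to a general pair: for $\lambda, \mu \in B_{k, n-k}$ one must check that every Schur summand $L_\eta U$ of $L_\lambda U^\dual \otimes L_\mu U$ with surviving cohomology contributes it in degree $0$, which holds because the constraints $\lambda, \mu \in B_{k,n-k}$ confine the relevant weights $\eta$ to the chamber where Bott's theorem outputs $H^0$.

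For fullness I would resolve the diagonal. On $\Gr(k,V) \times \Gr(k,V)$ with projections $p_1, p_2$, the composite $p_2^*U \hookrightarrow V \otimes \O \twoheadrightarrow p_1^*Q$ is a global section of $\mathcal{H}om(p_2^*U, p_1^*Q) \caniso (p_2^*U)^\dual \otimes p_1^*Q$, a bundle of rank $k(n-k) = \codim_{\Gr(k,V)\times\Gr(k,V)}\Delta$, and its zero scheme is exactly the diagonal $\Delta$ (where $p_2^*U \subseteq p_1^*U$, hence equal). Since this zero scheme has the expected codimension the section is regular, so the Koszul complex on $(p_2^*U)\otimes(p_1^*Q)^\dual$ is a finite locally free resolution of $\O_\Delta$. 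By the Cauchy formula its $p$-th term splits as $\Lambda^p\big((p_2^*U)\otimes(p_1^*Q)^\dual\big) \caniso \bigoplus_{|\nu| = p} p_2^*(L_\nu U) \otimes p_1^*\big(L_{\nu^T}(Q^\dual)\big)$, with $\nu$ running over Young diagrams inside a $k \times (n-k)$ box, i.e.\ $\nu \in B_{k, n-k}$. Now for any $B \in \Dbcoh(\Gr(k,V))$ one has $B \caniso p_{2*}(p_1^*B \otimes \O_\Delta)$; substituting the resolution, the $p$-th term contributes, via the projection formula and flat base change, $\bigoplus_\nu \RGamma\big(\Gr(k,V), B \otimes L_{\nu^T}(Q^\dual)\big) \otimes L_\nu(U)$. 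Hence $B$ is an iterated extension of objects of the form (finite-dimensional vector space) $\otimes\, L_\nu(U)$ with $\nu \in B_{k, n-k}$, so $B$ lies in $\langle \{L_\lambda U\}_{\lambda \in B_{k,n-k}}\rangle$; this proves fullness.

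I expect the main obstacle to be the Borel--Weil--Bott bookkeeping behind strongness: one must show that \emph{every} Schur summand contributing to $\Hom_{\Gr(k,V)}(L_\lambda U, L_\mu U)$ for $\lambda, \mu \in B_{k,n-k}$ does so in cohomological degree $0$, and tracking precisely which weights $\eta$ can appear and where they fall relative to the Bott chambers is the combinatorially delicate point. The remaining ingredients — regularity of the Koszul section (a codimension count), the Cauchy decomposition, and the Fourier--Mukai manipulation with $\O_\Delta$ — are standard, and the bulk of the semiorthogonality is already supplied by Lemma~\ref{kapranov semiorthogonality}.
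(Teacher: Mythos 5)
Your proposal is correct and is essentially Kapranov's original argument: semiorthogonality and strongness via Littlewood--Richardson plus Borel--Weil--Bott (the content of Lemma \ref{kapranov semiorthogonality}), and fullness via the Koszul resolution of the diagonal cut out by the tautological section of $(p_2^*U)^\dual \otimes p_1^*Q$ together with the Cauchy formula. The paper itself offers no proof of this theorem --- it is quoted from \cite{kapranov84} --- so there is no divergence to report.
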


\begin{remark}
Since $\Gr(k, V) \caniso \Gr(n-k, V^\dual)$, the sequence $\{ \, L_{\lambda^T}(Q) \, \}_{\lambda \in B_{k, n-k}^{\mathrm{op}}}$ is also a full strong exceptional collection.
\end{remark}

\begin{lemma}[{\cite{kapranov88}}]
\label{kapranov mutations}
Let $\lambda \in B_{k, n-k}$ be a diagram, and let $B_{< \lambda}$ be the set of Young diagrams strictly contained in $\lambda$. Then
\begin{itemize}
\item there is a long exact sequence of vector bundles on $\Gr(k, V)$
\[
0 \to L_\lambda U \to F_{|\lambda| - 1} \to \ldots \to F_1 \to F_0 \to \, L_{\lambda^T} Q \, \to 0
\]
where $F_m$ is a direct sum of bundles of the form $L_\mu U$ such that $|\mu| = m$ and $\mu \in B_{< \lambda}$.
\item for any linear order of $B_{< \lambda}$ reversing the inclusions, the rotation to the left of the collection $\langle \, L_\lambda U, \, \{\, L_\mu U \,\}_{\mu \in B_{< \lambda}} \, \rangle$ is the collection $\langle \, \{\, L_\mu U \,\}_{\mu \in B_{< \lambda}} \, , L_{\lambda^T} Q \, \rangle$.
\item for any linear order of $B_{< \lambda}$ reversing the inclusions, the following two exceptional collections can be transformed one into another by a sequence of mutations:
\[
\langle \,\, L_\lambda U, \, \{\, L_\mu U \,\}_{\mu \in B_{< \lambda}} \,\, \rangle \quad \leftrightsquigarrow \quad \langle \,\, \{\, L_{\mu^T} Q \,\}_{\mu \in B_{< \lambda}^{\mathrm{op}}}, \, L_{\lambda^T} Q \,\, \rangle
\]
In particular, they generate the same subcategory of $\Dbcoh(\Gr(k, V))$.
\end{itemize}
\end{lemma}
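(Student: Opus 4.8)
The plan is to treat the long exact sequence of the first bullet as the essential input and to derive the other two assertions from it by the formalism of mutations.

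\textbf{The resolution (first bullet).} I would construct the exact sequence as a \emph{Schur complex}: apply the Schur functor $L_{\lambda^T}$, in the sense of the complex-valued Schur functors of Akin--Buchsbaum--Weyman, to the two-term complex of vector bundles $[\,U \hookrightarrow V \otimes \O_{\Gr(k,V)}\,]$, which is quasi-isomorphic to $Q$. Since the cokernel $Q$ is locally free, the output is a complex quasi-isomorphic to $L_{\lambda^T}Q$ whose terms, by the Cauchy formula, are direct sums of $L_\alpha U \otimes L_\beta(V \otimes \O) = L_\alpha U \otimes (L_\beta V \otimes \O)$ --- sums of bundles $L_\alpha U$ tensored with fixed vector spaces --- sitting in homological degree $|\alpha|$, with extreme term $L_\lambda U$ (the transpose enters because the complex-valued Schur functor of $[A \to B]$ carries $L_{\nu^T}(A)$ in top degree, and $(\lambda^T)^T = \lambda$) and augmentation onto $L_{\lambda^T}Q$. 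Exactness is a standard property of Schur complexes of this shape; for $\lambda$ a single row, in particular for $k = 1$, it is the classical Koszul-type resolution attached to the sub-bundle $U \hookrightarrow V \otimes \O$. The point requiring genuine care is the bookkeeping of the terms --- that only $\mu \in B_{<\lambda}$ occur, i.e.\ with at most $n-k$ columns and strictly contained in $\lambda$ --- which one reads off from the shape of the Cauchy decomposition together with the vanishing $L_\mu U = 0$ for $\mu$ having more than $k$ rows. This resolution is precisely the one constructed in \cite{kapranov88}, and may be cited; I expect it to carry the real content.

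\textbf{The rotation (second bullet).} Splitting the long exact sequence into short exact sequences --- equivalently, reading it as one quasi-isomorphism --- yields a distinguished triangle
\[
L_{\lambda^T}Q\,[-|\lambda|] \longrightarrow L_\lambda U \longrightarrow G \longrightarrow L_{\lambda^T}Q\,[-|\lambda|+1],
\]
where $G = [\,F_{|\lambda|-1} \to \cdots \to F_0\,]$ is a perfect complex lying in $\mathcal{A} := \langle\, L_\mu U : \mu \in B_{<\lambda} \,\rangle$, which is admissible by Lemma~\ref{exceptional objects generate admissible subcategories}. On the other hand, a rotation to the left of $\langle\, L_\lambda U,\, \{L_\mu U\}_{\mu \in B_{<\lambda}} \,\rangle$ replaces $L_\lambda U$ by its iterated right mutation through $\{L_\mu U\}_{\mu \in B_{<\lambda}}$, i.e.\ by the right mutation of $L_\lambda U$ through $\mathcal{A}$, and that mutation is the unique object fitting a triangle of the displayed form with middle term in $\mathcal{A}$ and left-hand term in ${}^{\perp}\mathcal{A}$ --- the projection coming from the semiorthogonal decomposition $\langle \mathcal{A}, {}^{\perp}\mathcal{A}\rangle$ of $\langle L_\lambda U, \mathcal{A}\rangle$. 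So the claim reduces to $L_{\lambda^T}Q \in {}^{\perp}\mathcal{A}$, that is, $\mathrm{Hom}^\bullet(L_{\lambda^T}Q, L_\mu U) = 0$ for all $\mu \in B_{<\lambda}$. This is a Borel--Weil--Bott vanishing: the homogeneous bundle $L_\mu U \otimes (L_{\lambda^T}Q)^\dual$ acquires, after the $\rho$-shift, a non-regular weight once $\mu \subsetneq \lambda$ (alternatively, run the $\mathrm{Hom}$-computation underlying Lemma~\ref{kapranov semiorthogonality} against the complex $G$). The shift $[-|\lambda|]$ is absorbed by the shift that the definition of rotation already permits.

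\textbf{From $U$'s to $Q$'s (third bullet).} Applying the second bullet to $\lambda$ itself turns $\langle\, L_\lambda U,\, \{L_\mu U\}_{\mu \in B_{<\lambda}} \,\rangle$ into $\langle\, \{L_\mu U\}_{\mu \in B_{<\lambda}},\, L_{\lambda^T}Q \,\rangle$ by mutations; then I would induct on $|\lambda|$ to convert the $U$-block. Picking a maximal diagram $\nu \in B_{<\lambda}$, one uses Lemma~\ref{kapranov semiorthogonality} --- which makes incomparable diagrams completely orthogonal and also legitimizes the swaps $\langle L_{\mu'}U, L_\mu U\rangle \rightsquigarrow \langle L_\mu U, L_{\mu'}U\rangle$ whenever $\mu \subsetneq \mu'$ --- to bring $L_\nu U$ directly in front of the objects $\{L_\rho U\}_{\rho \in B_{<\nu}}$, apply the second bullet to $\nu$, and iterate. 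After finitely many steps one reaches $\langle\, \{L_{\mu^T}Q\}_{\mu \in B_{<\lambda}^{\mathrm{op}}},\, L_{\lambda^T}Q \,\rangle$, the order on the $Q$-side being reversed as in the remark following Theorem~\ref{kapranov full exceptional collection}. The last assertion, that the two collections generate the same subcategory, needs none of this bookkeeping: by the first bullet, applied to $\lambda$ and inductively to each $\mu \subseteq \lambda$, every $L_{\mu^T}Q$ lies in $\langle\, L_\nu U : \nu \subseteq \mu \,\rangle$, and reading the same resolutions backwards, every $L_\nu U$ lies in $\langle\, L_{\mu^T}Q : \mu \subseteq \nu \,\rangle$, so both collections have the same triangulated, summand-closed span. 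The main obstacle, as noted, is the first bullet; granting that resolution together with the Borel--Weil--Bott vanishing of the second bullet, everything else is formal manipulation of triangles, mutations and exceptional collections.
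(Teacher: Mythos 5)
Your overall strategy matches the paper's: establish the resolution of $L_{\lambda^T}Q$ by the bundles $L_\mu U$, read it as a triangle $L_\lambda U \to F_\bullet \to L_{\lambda^T}Q[1-|\lambda|]$ with $F_\bullet \in \langle L_\mu U\rangle_{\mu\in B_{<\lambda}}$ and $L_{\lambda^T}Q$ left-orthogonal to that subcategory, identify this as the mutation triangle, and iterate. The only real divergence is in the first bullet: the paper takes Kapranov's complex \cite[(3.3)]{kapranov88}, whose $p$-th term is $\bigoplus_{i,\alpha}\mathrm{R}^i\Hom(L_{\alpha^T}Q,L_{\lambda^T}Q)\otimes L_\alpha U$, and then uses Lemma \ref{kapranov semiorthogonality} plus strongness of the Kapranov collection to see that only $\alpha\subseteq\lambda$ and $i=0$ contribute; you instead build the resolution as an Akin--Buchsbaum--Weyman Schur complex of $[\,U\hookrightarrow V\otimes\O\,]$, which is a legitimate alternative (and in fact the terms $L_{\alpha^T}U\otimes L_{\lambda^T/\alpha}V$ automatically satisfy $\alpha^T\subsetneq\lambda$, so the bookkeeping you worry about is immediate). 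For the left-orthogonality $\Hom^\bullet(L_{\lambda^T}Q,L_\mu U)=0$ the paper simply cites \cite[Prop.~2.2]{kapranov84}; your direct Borel--Weil--Bott justification is the content of that citation, though note your fallback suggestion (testing against $G$) is circular, since for $\mu\subsetneq\lambda$ Lemma \ref{kapranov semiorthogonality} does \emph{not} give $\Hom(L_\lambda U,L_\mu U)=0$.

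One concrete slip in your third bullet: the swap $\langle L_{\mu'}U,L_\mu U\rangle\rightsquigarrow\langle L_\mu U,L_{\mu'}U\rangle$ for $\mu\subsetneq\mu'$ is \emph{not} legitimate, because $\Hom^\bullet(L_{\mu'}U,L_\mu U)$ need not vanish when $\mu'$ contains $\mu$; only \emph{incomparable} diagrams are completely orthogonal and hence freely transposable. Fortunately that is all your induction actually requires (any diagram sitting between a maximal $\nu\in B_{<\lambda}$ and the block $B_{<\nu}$ in an inclusion-reversing order is incomparable with $\nu$), so the argument survives once you restrict the transpositions to incomparable pairs.
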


\begin{proof}
By \cite[(3.3)]{kapranov88} there is a resolution of $L_{\lambda^T} Q$ given by the complex whose $p$'th term is a direct sum of vector bundles
\[
\bigoplus_{i = 0}^\infty
\bigoplus_{\substack{\alpha \in B_{k, n-k} \\ |\alpha| = i - p}} 
\mathrm{R}^i\Gamma\left(\Gr(k, V), \, L_{\lambda^T} Q \otimes L_{\alpha^T} Q^\dual\right) \, \otimes \, L_{\alpha} U .
\]
The cohomology term is isomorphic to $\mathrm{R}^i\Hom(L_{\alpha^T} Q, L_{\lambda^T} Q)$. By Lemma \ref{kapranov semiorthogonality} it can be nonzero only when $\alpha \subset \lambda$, and the strongness property from Theorem \ref{kapranov full exceptional collection} implies that $i$ must be zero. Thus the complex is concentrated in degrees $[\, -|\lambda|, 0 \, ]$ and the $(-|\lambda|)$'th term is equal to $L_\lambda U$, as required.

To prove the second part of the statement, let $F_\bullet \in \Dbcoh(\Gr(k, V))$ be the complex
\[
F_{|\lambda| - 1} \to \ldots \to F_1 \to F_0
\]
as in the sequence above, with $F_{|\lambda| - 1}$ put in degree 0. Then there is an exact triangle in the category $\Dbcoh(\Gr(k, V))$:
\[
L_{\lambda} U \to F_\bullet \xrightarrow{\text{cone}} L_{\lambda^T} Q \, [1-|\lambda|]
\]
By definition $F_\bullet \in \{\, L_\mu U \,\}_{\mu \in B_{< \lambda}}$, and by \cite[Prop.~2.2]{kapranov84} the vector bundle $L_{\lambda^T} Q$ is left-orthogonal to that subcategory.
These properties characterize the mutation triangles, so the second part is proved. The iteration of this argument proves the third part of the statement as well.
\end{proof}

\subsection{Relative semiorthogonal decompositions}
\label{linear subcategories}

Until the end of this subsection, fix a quasiprojective variety $X$. The notions described below allow us to work with the derived categories of sheaves on varieties over $X$ locally on the base. In Section \ref{section on relative decomposition for grs} this machinery is applied to prove Theorem \ref{theorem on relative decomposition for grs} by using Theorem \ref{theorem on decomposition for ukn} as a local model.

There are many possible levels of generality. The ideas originate from \cite[(2.7)--(2.8)]{kuznetsov06} and \cite[(2.8)]{hpd}, but these sources work with smooth varieties, while we need locally complete intersections. Our Definition \ref{definition of relative sod} of a relative semiorthogonal decomposition has the least generality sufficient for the proof of Theorem \ref{theorem on relative decomposition for grs}. It is a special case of situations  considered in \cite{kuznetsov-basechange} and \cite{conservative-descent}. These two papers establish, respectively, the base change theorem (Theorem \ref{relative base change of sod}) and the faithfully flat descent theorem (Theorem \ref{relative descent of sod}) for semiorthogonal decompositions.

The settings of \cite{kuznetsov-basechange} and \cite{conservative-descent} are slightly different, so in order to use both results at the same time we need some bookkeeping. Thus we start by proving several lemmas about locally complete intersection morphisms. Recall the standard definition.

\begin{definition}
A morphism $f\colon Y_1 \to Y_2$ of varieties is a \emph{locally complete intersection morphism} if any closed point $y \in Y_1$ has an open neighborhood $U \subset Y_1$ and a factorization of the restriction $f|_U$ as $\pi \circ i$, where $i\colon U \to P$ is a regular closed embedding and $\pi\colon P \to Y_2$ is a smooth morphism.
\end{definition}

For a variety $Y$ denote by $\Dqcoh^{[p, q]}(Y)$ the subcategory of $\Dqcoh(Y)$ of complexes $G$ such that $\mathcal{H}^i(G) = 0$ for $i$ outside of the segment $[p, q]$.

\begin{definition}
Let $Y_1, Y_2$ be two varieties. A functor $\Phi\colon \Dqcoh(Y_1) \to \Dqcoh(Y_2)$ has \emph{finite cohomological amplitude} if there exist integers $a, b$ such that for any $p, q$ the restriction of $\Phi$ to $\Dqcoh^{[p, q]}(Y_1)$ has image in $\Dqcoh^{[p-a, q+b]}(Y_2)$.
\end{definition}

The following lemma is a combination of several standard results.

\begin{lemma}
\label{proper lci properties}
Let $f\colon Y_1 \to Y_2$ be a proper locally complete intersection morphism of varieties. Then 
\begin{enumerate}
\item The functor $f_*\colon \Dqcoh(Y_1) \to \Dqcoh(Y_2)$ has a right adjoint $f^!$;
\item The functor $f^*\colon \Dqcoh(Y_2) \to \Dqcoh(Y_1)$ has a left adjoint  $f_!$;
\item \label{proper lci functors amplitude} The functors $f_!$, $f^*$, $f_*$, $f^!$  have finite cohomological amplitude;
\item \label{proper lci functors dbcoh} The functors $f_!$, $f^*$, $f_*$, $f^!$ restrict to functors between $\Dbcoh(Y_1)$ and $\Dbcoh(Y_2)$;
\item \label{proper lci functors dperf} The functors $f_!$, $f^*$, $f_*$, $f^!$ restrict to functors between $\Dperf(Y_1)$ and $\Dperf(Y_2)$.
\end{enumerate}
\end{lemma}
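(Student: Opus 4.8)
The plan is to deduce all five assertions from standard properties of proper morphisms and of morphisms of finite Tor-dimension, together with Grothendieck duality; the full locally complete intersection hypothesis, as opposed to mere finiteness of Tor-dimension, enters only to guarantee that the relative dualizing complex is invertible. First I would record the two relevant consequences of $f$ being lci. \emph{(i)} The morphism $f$ is perfect: being locally of finite type over the Noetherian base $Y_2$ it is pseudo-coherent, and it is of finite Tor-dimension, since this last property is local on $Y_1$, where a factorization $f = \pi \circ i$ with $i$ a regular closed embedding and $\pi$ smooth exhibits $f$ as the composite of a morphism of Tor-dimension at most $\codim i$ (via the Koszul resolution) with a flat one. \emph{(ii)} The relative dualizing complex $\omega_f := f^!(\O_{Y_2})$ is an invertible object of $\Dbcoh(Y_1)$: because $f$ is lci it is, locally, a line bundle placed in a single (locally constant) degree. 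Therefore $L := (-) \otimes \omega_f$ is an autoequivalence of $\Dqcoh(Y_1)$ of finite cohomological amplitude that restricts to autoequivalences of $\Dbcoh(Y_1)$ and of $\Dperf(Y_1)$.

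For assertion (1): since $f$ is proper it is quasi-compact and quasi-separated, the derived functor $f_*$ preserves small coproducts and has finite cohomological amplitude (bounded by the maximal fibre dimension), and $\Dqcoh(Y_1)$ is compactly generated, so Neeman's Brown representability theorem produces a right adjoint $f^!$ to $f_*$; this is the proper case of Grothendieck duality over Noetherian schemes. For assertion (2): Grothendieck duality for the proper perfect morphism $f$ gives a functorial isomorphism $f^!(-) \caniso L(f^*(-))$, whence $f^* = L^{-1} \circ f^!$; a left adjoint of this composite is $f_* \circ L$, in other words $f_!(-) := f_*\bigl((-) \otimes \omega_f\bigr)$ is left adjoint to $f^*$. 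In the situation of Lemma \ref{adjoints for proper maps of smooth varieties} one has $\omega_f \caniso \omega_{Y_1} \otimes f^*\omega_{Y_2}^{\dual}[\dim Y_1 - \dim Y_2]$, so this recovers that lemma.

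Assertions (3)--(5) now follow from well-known facts about $f^*$ and $f_*$, carried over to $f^! = L \circ f^*$ and $f_! = f_* \circ L$ along the autoequivalence $L$. For (3): $f^*$ has finite cohomological amplitude because $f$ has finite Tor-dimension, $f_*$ has finite cohomological amplitude by the bound on fibre dimensions, and composing with $L$ changes amplitudes by a bounded amount. For (4): $f^*$ sends a bounded complex with coherent cohomology to one of the same kind (finite Tor-dimension together with coherence of the underived pullback), $f_*$ does so by the coherence theorem for proper morphisms of Noetherian schemes combined with the bound on cohomological dimension, and tensoring by the shifted line bundle $\omega_f$ preserves $\Dbcoh(Y_1)$. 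For (5): $f^*$ preserves perfect complexes for any morphism, $f_*$ preserves them because $f$ is a proper perfect morphism (SGA 6), and tensoring by $\omega_f$ preserves $\Dperf(Y_1)$.

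The main obstacle is citational rather than conceptual: one must invoke Grothendieck duality, and the stability of perfect complexes under proper pushforward, for Noetherian schemes that are merely lci rather than smooth, and check in this generality both that ``proper lci'' implies ``proper perfect'' and that the relative dualizing complex is invertible. Once these inputs are secured at the correct level of generality, the rest is the bookkeeping indicated above.
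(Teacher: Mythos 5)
Your proposal is correct and follows essentially the same route as the paper: establish that a proper lci morphism is perfect with invertible relative dualizing complex $\omega_f = f^!\O_{Y_2}$, obtain $f^!$ from duality and define $f_! = f_*((-)\otimes\omega_f)$, then reduce statements (3)--(5) to the corresponding facts for $f_*$ and $f^*$ since tensoring by the shifted line bundle $\omega_f$ is an autoequivalence preserving amplitude, $\Dbcoh$, and $\Dperf$. The only differences are in the references invoked (Brown representability and SGA~6 versus the Stacks Project, Lipman, and Lipman--Neeman), not in the argument.
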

\begin{proof}
Recall that locally complete intersection morphisms are perfect \cite[Tag~069H]{stacks-project}, so by, e.g., \cite[Th.~4.8.1\,and\,Th.~4.9.4]{lipman09}, the functor $f^!$ exists and $f^!(-) = f^*(-) \otimes f^! \O_{Y_2}$. Furthermore, the object $f^! \O_{Y_2}$ is a shift of a line bundle \cite[Tag~0E4B]{stacks-project}, so it is an invertible object of $\Dqcoh(Y_1)$. Hence for any $A \in \Dqcoh(Y_1), B \in \Dqcoh(Y_2)$ we get
\[
\Hom_{Y_1}(A, f^* B) \caniso \Hom_{Y_1}(A \otimes f^! \O_{Y_2}, f^*B \otimes f^! \O_{Y_2}) \caniso \Hom_{Y_2}(f_*(A \otimes f^! \O_{Y_2}), B).
\]
Thus the functor $f_!(A) := f_*(A \otimes f^! \O_{Y_2})$ is the left adjoint to $f^*$.

The functors $f_!$ and $f^!$ are built out of $f_*$, $f^*$, and the tensor multiplication by a perfect complex $f^! \O_{Y_2}$. Therefore it is enough to prove (\ref{proper lci functors amplitude}), (\ref{proper lci functors dbcoh}), and (\ref{proper lci functors dperf}) for the functors $f_*$ and $f^*$.

The inclusion $f^*(\Dperf(Y_2)) \subset \Dperf(Y_1)$ holds for any morphism. Since $f$ is a  locally complete intersection morphism, $f^*(\Dbcoh(Y_2)) \subset \Dbcoh(Y_1)$ and $f^*$ has finite cohomological amplitude. 
The inclusion $f_*(\Dbcoh(Y_1)) \subset \Dbcoh(Y_2)$ holds for any proper morphism of varieties. The dimensions of fibers of $f$ is bounded, so $f_*$ is of finite cohomological amplitude. Since $f$ is proper and perfect, by \cite[Prop.~2.1]{lipman-neeman} we have $f_*(\Dperf(Y_1)) \subset \Dperf(Y_2)$. Thus the items (\ref{proper lci functors amplitude}), (\ref{proper lci functors dbcoh}), and (\ref{proper lci functors dperf}) hold.
\end{proof}

\begin{remark}
Lemma \ref{proper lci properties} subsumes Lemma \ref{adjoints for proper maps of smooth varieties} since 
any morphism between smooth varieties is a locally complete intersection morphism \cite[Tag~0E9K]{stacks-project}.
\end{remark}

\begin{lemma}[{\cite[Prop.~3.5]{conservative-descent}}]
\label{lci fourier-mukai properties}
Let $Y_1, Y_2$ be two varieties
with projective maps $\pi_i\colon Y_i \to X$ 
such that the projection morphisms $p_i\colon Y_1 \times_X Y_2 \to Y_i$ are locally complete intersection morphisms. Let $K \in \Dperf(Y_1 \times_X Y_2)$ be a perfect complex. Let $\Phi_{K}\colon \Dqcoh(Y_1) \to \Dqcoh(Y_2)$ be the functor defined by the formula $G \mapsto p_{2\, *}(p_1^* G \otimes K)$. Then
\begin{enumerate}
\item \label{lci fourier-mukai adjoints} there exist perfect complexes $K^\prime, K^{\prime \prime} \in \Dperf(Y_2 \times_X Y_1)$ such that the functors $\Phi_{K^\prime}, \, \Phi_{K^{\prime \prime}}\colon \Dqcoh(Y_2) \to \Dqcoh(Y_1)$ are left and right adjoint functors to $\Phi_{K}$;
\item \label{lci fourier-mukai amplitude} The functors $\Phi_{K}$, $\Phi_{K^{\prime}}$, $\Phi_{K^{\prime \prime}}$ are of finite cohomological amplitude.
\item The functors $\Phi_{K}$, $\Phi_{K^{\prime}}$, $\Phi_{K^{\prime \prime}}$ restrict to functors between $\Dbcoh(Y_1)$ and $\Dbcoh(Y_2)$;
\item The functors $\Phi_{K}$, $\Phi_{K^{\prime}}$, and $\Phi_{K^{\prime \prime}}$ restrict to functors between the categories $\Dperf(Y_1)$ and $\Dperf(Y_2)$.
\end{enumerate}
\end{lemma}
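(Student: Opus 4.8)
The plan is to produce the adjoint Fourier--Mukai kernels by Grothendieck--Verdier duality, and then to deduce finite amplitude and the preservation of $\Dbcoh$, $\Dperf$ by writing each of $\Phi_K$, $\Phi_{K'}$, $\Phi_{K''}$ as a composition of functors to which Lemma \ref{proper lci properties} applies. First I would record the basic inputs: since $\pi_1$ and $\pi_2$ are projective, the projections $p_1, p_2$ are projective, in particular proper; by hypothesis they are also locally complete intersection morphisms, so Lemma \ref{proper lci properties} applies to each $p_i$. Thus every $p_i$ carries the four adjoint functors $p_{i\,!}, p_i^*, p_{i\,*}, p_i^!$, all of finite cohomological amplitude and preserving $\Dbcoh$ and $\Dperf$, and moreover $p_i^!(-) \caniso p_i^*(-) \otimes p_i^!\O_{Y_i}$ with $p_i^!\O_{Y_i}$ a shift of a line bundle, in particular an invertible object of $\Dperf(Y_1 \times_X Y_2)$.

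For part (1), set $K' := K^\dual \otimes p_1^!\O_{Y_1}$ and $K'' := K^\dual \otimes p_2^!\O_{Y_2}$, regarded as perfect complexes on $Y_2 \times_X Y_1$ via the transposition isomorphism $Y_1 \times_X Y_2 \caniso Y_2 \times_X Y_1$ (which swaps the two projections). These are perfect because $K$ is, so that $K^\dual$ is perfect with $(K^\dual)^\dual \caniso K$, and $p_i^!\O_{Y_i}$ is invertible. Then I would check that $\Phi_{K''}$ is right adjoint to $\Phi_K$ by the chain, for $F \in \Dqcoh(Y_1)$ and $G \in \Dqcoh(Y_2)$,
\[
\begin{aligned}
\Hom_{Y_2}(\Phi_K F,\, G) &\caniso \Hom(p_1^* F \otimes K,\, p_2^! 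G) \caniso \Hom(p_1^* F,\, K^\dual \otimes p_2^! G) \\
&\caniso \Hom_{Y_1}\big(F,\, p_{1\,*}(p_2^* G \otimes K^\dual \otimes p_2^!\O_{Y_2})\big),
\end{aligned}
\]
using in order the adjunction $(p_{2\,*}, p_2^!)$, the perfectness of $K$ together with tensor--Hom adjunction, the projection formula $p_2^!(-) \caniso p_2^*(-) \otimes p_2^!\O_{Y_2}$, and the adjunction $(p_1^*, p_{1\,*})$; the right-hand side is $\Hom_{Y_1}(F, \Phi_{K''} G)$. Dually, using $p_1^! F \caniso p_1^* F \otimes p_1^!\O_{Y_1}$ and cancelling the invertible factor $p_1^!\O_{Y_1}$, one obtains $\Hom_{Y_1}(\Phi_{K'} G,\, F) \caniso \Hom_{Y_2}(G,\, \Phi_K F)$, so $\Phi_{K'}$ is left adjoint to $\Phi_K$.

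For parts (2)--(4), I would write $\Phi_K = p_{2\,*} \circ (- \otimes K) \circ p_1^*$, and likewise express $\Phi_{K'}$ and $\Phi_{K''}$ as compositions of $p_{1\,*}$, $p_2^*$, and tensoring by the perfect complexes $K'$, $K''$. Each of these three kinds of functor has finite cohomological amplitude: $p_i^*$ and $p_{i\,*}$ by Lemma \ref{proper lci properties}\,(\ref{proper lci functors amplitude}), and $- \otimes K$ because $K$ is locally a bounded complex of vector bundles; hence so do the compositions, which gives (2). In the same way each kind preserves $\Dbcoh$, using Lemma \ref{proper lci properties}\,(\ref{proper lci functors dbcoh}) and the fact that the tensor product of a bounded coherent complex with a perfect complex is bounded and coherent, and each preserves $\Dperf$, using Lemma \ref{proper lci properties}\,(\ref{proper lci functors dperf}) and the fact that the tensor product of two perfect complexes is perfect; this yields (3) and (4).

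Almost everything here is bookkeeping on top of Lemma \ref{proper lci properties}. The single place where the locally complete intersection hypothesis genuinely does work, beyond supplying the adjoints $p_i^!$ and $p_{i\,!}$, is the assertion that $p_i^!\O_{Y_i}$ is a shift of a line bundle: this is exactly what forces the adjoint kernels $K'$, $K''$ to be perfect rather than merely pseudo-coherent, i.e.~what keeps us inside the class of Fourier--Mukai functors with perfect kernels. The only mild subtlety I anticipate is keeping the transposition $Y_1 \times_X Y_2 \caniso Y_2 \times_X Y_1$ straight, so that the kernels ``$K', K''$ on $Y_2 \times_X Y_1$'' of part (1) really do induce the functors whose adjunction is verified above.
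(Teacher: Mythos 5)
Your proposal is correct and takes essentially the same route as the paper: the paper defines the very same kernels $K' = K^\dual \otimes p_1^!\O_{Y_1}$ and $K'' = K^\dual \otimes p_2^!\O_{Y_2}$ and derives parts (2)--(4) directly from Lemma \ref{proper lci properties}. Your write-up merely spells out the adjunction chains and the decomposition $\Phi_K = p_{2\,*}\circ(-\otimes K)\circ p_1^*$ that the paper leaves as ``easy to see.''
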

\begin{proof}
Using the explicit descriptions of $p_{1 \, !}$ and $p_2^!$ from Lemma \ref{proper lci properties} it is easy to see that the objects $K^\prime := K^\dual \otimes p_1^!\O_{Y_1}$ and $K^{\prime \prime} := K^\dual \otimes p_2^!\O_{Y_2}$ give (\ref{lci fourier-mukai adjoints}). The other three statements follow directly from Lemma \ref{proper lci properties}.
\end{proof}

\begin{remark}
In the terminology of \cite{conservative-descent}, 
Lemma \ref{proper lci properties} shows that the functor $\Phi_{K}$ and its adjoints from Lemma \ref{lci fourier-mukai properties} are \emph{Fourier--Mukai transforms over $X$} as in \cite[Def.~3.3]{conservative-descent}.
\end{remark}

\begin{definition}
\label{definition of relative sod}
Let $Y$, $\{ X_i \}_{i \in [1, n]}$ be varieties 
projective over $X$
such that the projection maps $Y \times_X X_i \to Y$ and $Y \times_X X_i \to X_i$ are locally complete intersection morphisms. Let also $\{ K_i \in \Dperf(Y \times_X X_i) \}_{i \in [1, n]}$ be a set of perfect complexes. This collection is called a \emph{relative semiorthogonal decomposition} of $Y$ over $X$ if
\begin{enumerate}
\item each $\Phi_{K_i}\colon \Dbcoh(X_i) \to \Dbcoh(Y)$ is a fully faithful embedding onto an admissible subcategory $\mathcal{A}_{i, \, \mathrm{coh}} \subset \Dbcoh(Y)$;
\item the sequence $\langle \mathcal{A}_{1, \, \mathrm{coh}}, \ldots, \mathcal{A}_{n, \, \mathrm{coh}} \rangle \subset \Dbcoh(Y)$ is a semiorthogonal decomposition.
\end{enumerate}
\end{definition}

\begin{remark}
In the language of \cite{kuznetsov-basechange} this decomposion is strong, $X$-linear, and its projection functors $\Dbcoh(Y) \to \mathcal{A}_{i, \, \mathrm{coh}}$ have finite cohomological amplitude by Lemma \ref{proper lci properties} (\ref{lci fourier-mukai amplitude}).
\end{remark}

\begin{theorem}[{\cite[Th.~5.6]{kuznetsov-basechange}}]
\label{relative base change of sod}
Let $(Y, \{ X_i, K_i \}_{i \in [1, n]})$ be a relative semiorthogonal decomposition of $Y$.  Let $\phi\colon X^\prime \to X$ be a flat map. Then the base change $(Y^\prime, \{ X_{i}^\prime, K_{i}^\prime \}_{i \in [1, n]})$ of this data forms a relative semiorthogonal decomposition of $Y^\prime$ over $X^\prime$.
\end{theorem}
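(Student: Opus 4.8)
The plan is to check the two conditions of Definition~\ref{definition of relative sod} for the base-changed data $(Y', \{X_i', K_i'\})$, the well-definedness hypotheses being automatic: writing $\psi\colon Y' \to Y$ and $\phi_i\colon X_i' \to X_i$ for the base-change morphisms, one has $Y' \times_{X'} X_i' \caniso (Y \times_X X_i) \times_X X'$, so the projection $Y' \times_{X'} X_i' \to Y'$ is the base change of $Y \times_X X_i \to Y$ along $\psi$; since locally complete intersection morphisms are stable under base change it is again lci, and likewise for $Y' \times_{X'} X_i' \to X_i'$.

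The substantive input is base change for Fourier--Mukai functors over $X$ in the lci setting. Since $\phi$ is flat, the fibre squares that occur are Tor-independent, so base change for the proper pushforwards (valid since $\psi$ is flat), the projection formula, and the evident compatibility of pullbacks combine to a natural isomorphism $\psi^* \circ \Phi_{K_i} \caniso \Phi_{K_i'} \circ \phi_i^*$, together with the analogous statements for the left and right adjoints from Lemma~\ref{lci fourier-mukai properties}. More generally, flat pullback commutes with the convolution of perfect kernels over $X$: the composite of two Fourier--Mukai functors over $X$ is again Fourier--Mukai over $X$ with a perfect kernel --- this uses the lci hypotheses on the intermediate triple fibre products together with Lemma~\ref{proper lci properties}, so that the convolved kernel is perfect and the intermediate pushforward commutes with flat pullback --- and forming this convolved kernel commutes with base change along $\phi$.

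Granting this, preservation of semiorthogonality and full faithfulness is bookkeeping at the level of kernels. The hypothesis that each $\Phi_{K_i}$ is fully faithful onto an admissible subcategory with a finite-amplitude projection ensures, via Lemma~\ref{lci fourier-mukai properties}, that $\Phi_{K_i}$ has a two-sided adjoint, itself Fourier--Mukai over $X$ with a perfect kernel. For $j > i$ the composite $\Phi_{K_j}^R \circ \Phi_{K_i}$ is then Fourier--Mukai over $X$ with a perfect kernel $M_{ji} \in \Dperf(X_j \times_X X_i)$; semiorthogonality of $\langle \mathcal{A}_{1,\mathrm{coh}}, \ldots, \mathcal{A}_{n,\mathrm{coh}} \rangle$ is equivalent to $M_{ji} = 0$ for all $j > i$ (a Fourier--Mukai functor over $X$ between derived categories of varieties vanishes iff its kernel does, by restriction to fibres over closed points), and full faithfulness of $\Phi_{K_i}$ is equivalent to the unit map $\O_{\Delta_{X_i}} \to M_{ii}$ being an isomorphism. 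Flat pullback is exact and preserves perfectness, zero objects, and isomorphisms, so by the compatibility of convolution with base change it carries $M_{ji}$ and $\O_{\Delta_{X_i}} \to M_{ii}$ to the corresponding primed data; hence $\Phi_{K_j'}^R \circ \Phi_{K_i'} = 0$ for $j > i$ and $\mathrm{id}_{\Dbcoh(X_i')} \caniso \Phi_{K_i'}^R \circ \Phi_{K_i'}$. This yields semiorthogonality of the primed subcategories and full faithfulness of each $\Phi_{K_i'}$, whose two-sided adjoint --- the base change of that of $\Phi_{K_i}$ --- realizes $\mathcal{A}_{i,\mathrm{coh}}' := \Phi_{K_i'}(\Dbcoh(X_i'))$ as an admissible subcategory.

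I expect the main obstacle to be generation, i.e.\ that the $\mathcal{A}_{i,\mathrm{coh}}'$ span $\Dbcoh(Y')$, and I would derive it from a Postnikov filtration of the diagonal. The relative (strong, $X$-linear, finite-amplitude) semiorthogonal decomposition of $Y$ presents $\mathrm{id}_{\Dbcoh(Y)}$ as an iterated cone of the projection functors onto the $\mathcal{A}_{i,\mathrm{coh}}$; these projections are compositions and cones of Fourier--Mukai functors over $X$ with perfect kernels, hence themselves Fourier--Mukai over $X$ with perfect kernels $R_i \in \Dperf(Y \times_X Y)$ (this is exactly where the strong/finite-amplitude conditions enter, via Lemmas~\ref{proper lci properties} and~\ref{lci fourier-mukai properties}), and therefore $\O_{\Delta_Y}$, being an iterated cone of the $R_i$, is perfect as well. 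Flat pullback along the base-change morphism $Y' \times_{X'} Y' \to Y \times_X Y$ is exact, so it carries this filtration to a filtration of $\O_{\Delta_{Y'}}$ whose graded pieces are the corresponding primed projection kernels, using the base-change compatibility of convolution and of adjoints from the earlier step. Applying the resulting Postnikov system to an arbitrary $G \in \Dbcoh(Y')$ exhibits $G$ as an iterated cone of objects lying in the $\mathcal{A}_{i,\mathrm{coh}}'$, so these subcategories generate and the decomposition is complete. The delicate points --- and the reason the lci and finite-cohomological-amplitude conditions are built into Definition~\ref{definition of relative sod} --- are precisely that $\O_{\Delta_Y}$ and the kernels $R_i$ are honest perfect complexes on $Y \times_X Y$, so that flat base change applies to the filtration term by term, and that convolution of perfect kernels over $X$ commutes with flat base change.
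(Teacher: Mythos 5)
First, a point of comparison: the paper does not prove this statement at all --- it is quoted directly from \cite[Th.~5.6]{kuznetsov-basechange}, so there is no internal proof to measure your attempt against. Your outline does follow the same overall strategy as Kuznetsov's published argument: reformulate everything at the level of kernels, show that the projection functors of the decomposition are Fourier--Mukai over $X$ with suitable kernels, and transport a filtration of $\O_{\Delta_Y}$ by those kernels along flat pullback to obtain semiorthogonality, full faithfulness, and generation downstairs. The preliminary observations are correct: locally complete intersection morphisms are stable under flat base change (which is exactly why the paper's version of the theorem insists on $\phi$ flat rather than merely faithful), the comparison $\psi^*\circ\Phi_{K_i}\caniso\Phi_{K_i'}\circ\phi_i^*$ follows from flat base change and the projection formula, and a perfect kernel vanishes if and only if its kernel functor kills all skyscrapers.

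The genuine gaps sit precisely in the two places you dismiss as bookkeeping. (1) You repeatedly pass from natural transformations of Fourier--Mukai functors to morphisms of their kernels: the unit $\O_{\Delta_{X_i}}\to M_{ii}$, the counits whose cones are supposed to build the projection kernels $R_i$, and the connecting maps in the Postnikov tower for $\O_{\Delta_Y}$. A natural transformation between kernel functors need not be induced by a map of kernels, and a cone of kernel functors is again a kernel functor only after such a lift has been produced; constructing these lifts and showing the resulting tower actually converges to $\O_{\Delta_Y}$ is the main content of Kuznetsov's paper, not a formal consequence of Lemmas \ref{proper lci properties} and \ref{lci fourier-mukai properties}. (2) The convolution formula $\Phi_{K_2}\circ\Phi_{K_1}=\Phi_{K_2\circ K_1}$, and its compatibility with base change, requires Tor-independence of the Cartesian squares formed by $X_i\times_X Y$ and $Y\times_X X_j$ over $Y$; the lci hypotheses give finite Tor-dimension of the projections but not Tor-independence, so Lemma \ref{proper lci properties} does not supply this, and it is exactly what Kuznetsov's ``faithful base change'' formalism is engineered to control. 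Without these two ingredients the reduction of semiorthogonality, full faithfulness, and generation to statements about $M_{ji}$ and $R_i$ is not yet justified. In short: the strategy is the right one and matches the cited source, but as written the argument is an outline of \cite{kuznetsov-basechange} with its hardest steps asserted rather than proved.
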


\begin{remark}
The condition on the base change map $\phi\colon X^\prime \to X$ in \cite[Th.~5.6]{kuznetsov-basechange} is that it is faithful with respect to the map $Y \to X$ (see, e.g., \cite[Sec.~2.4]{kuznetsov-basechange}), not necessarily flat. But we made it a part of Definition \ref{definition of relative sod} that certain projection maps are locally complete intersections. It is not clear that this condition is preserved unless the morphism $\phi$ is flat.
\end{remark}

\begin{theorem}[{\cite[Th.~6.1\,and\,6.2]{conservative-descent}}]
\label{relative descent of sod}
Let $Y, \{ X_i \}_{i \in [1, n]}$ be varieties 
projective over $X$
such that the projection maps $Y \times_X X_i \to Y$ and $Y \times_X X_i \to X_i$ are locally complete intersection morphisms. Let $\{ K_i \in \Dperf(Y \times_X X_i) \}_{i \in [0, n]}$ be a set of perfect complexes. Given a faithfully flat morphism $\phi\colon X^\prime \to X$ denote by $Y^\prime$, $X_{i}^\prime$, $K_{i}^\prime$ the base change of these objects along $\phi$. If $(Y^\prime, \{ X_i^\prime, \, K_i^\prime \}_{i \in [1, n]})$ is a relative semiorthogonal decomposition of $Y^\prime$ over~$X^\prime$, then $(Y, \{ X_i, K_i \}_{i \in [1, n]})$ is a relative semiorthogonal decomposition of $Y$ over $X$.
\end{theorem}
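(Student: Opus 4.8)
The plan is to rewrite each clause of Definition \ref{definition of relative sod} as the vanishing of an object or the invertibility of a morphism, and then to observe that every such condition descends along the faithfully flat map $\phi$. Two inputs do all the work. The first is \emph{flat base change for the Fourier--Mukai functors over $X$}: I would check that each $\Phi_{K_i}$ from Lemma \ref{lci fourier-mukai properties}, as well as its left and right adjoints --- whose existence is part of that lemma, and which I write $\Phi_{K_i}^L$, $\Phi_{K_i}^R$ --- commutes with pullback along $\phi$ and along the induced maps $\phi_Y\colon Y^\prime \to Y$ and $\phi_{X_i}\colon X_i^\prime \to X_i$ (all faithfully flat), compatibly with the units and counits of the adjunctions, and that the base change of the kernel $K_i$ is precisely the kernel $K_i^\prime$ of the hypothesis; since the lci and projectivity assumptions survive flat base change, Lemma \ref{lci fourier-mukai properties} is available over $X^\prime$ as well. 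The second input is \emph{faithful flatness}: for a faithfully flat morphism $g\colon T^\prime \to T$ of varieties, $g^*M \caniso 0$ forces $M \caniso 0$ (because $\mathcal{H}^i(g^*M) \caniso g^*\mathcal{H}^i(M)$), and a morphism whose $g^*$-pullback is invertible is itself invertible. Because the functors in Lemma \ref{lci fourier-mukai properties} are continuous, I would carry out the verifications on $\Dqcoh$, where this descent is most transparent, and then restrict to $\Dbcoh$.

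To see each $\Phi_{K_i}$ is fully faithful, consider the unit $u_i\colon \mathrm{id}_{\Dqcoh(X_i)} \to \Phi_{K_i}^R\Phi_{K_i}$. By the first input, $\phi_{X_i}^*(u_i)$ is identified with the analogous unit over $X^\prime$, which is an isomorphism since $\Phi_{K_i^\prime}$ is fully faithful by hypothesis; faithful flatness of $\phi_{X_i}$ then forces $u_i$ to be an isomorphism. A fully faithful functor admitting both adjoints --- which here restrict to $\Dbcoh$ by Lemma \ref{lci fourier-mukai properties} --- identifies $\Dbcoh(X_i)$ with an admissible subcategory $\mathcal{A}_{i, \, \mathrm{coh}} \subset \Dbcoh(Y)$. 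For semiorthogonality I would use the adjunction isomorphism $\RHom_Y(\Phi_{K_j}B, \Phi_{K_i}A) \caniso \RHom_{X_j}(B, \Phi_{K_j}^R\Phi_{K_i}A)$ to reduce, for $j > i$, the semiorthogonality of $\langle \mathcal{A}_{i, \, \mathrm{coh}}, \mathcal{A}_{j, \, \mathrm{coh}} \rangle$ to the vanishing of the functor $\Phi_{K_j}^R\Phi_{K_i}\colon \Dqcoh(X_i) \to \Dqcoh(X_j)$. Applying the first input to $\Phi_{K_i}$ and then to $\Phi_{K_j}^R$ gives $\phi_{X_j}^*\circ(\Phi_{K_j}^R\Phi_{K_i}) \caniso (\Phi_{K_j^\prime}^R\Phi_{K_i^\prime})\circ\phi_{X_i}^*$; the right-hand side vanishes by the semiorthogonality part of the hypothesis over $X^\prime$, so faithful flatness of $\phi_{X_j}$ forces $\Phi_{K_j}^R\Phi_{K_i} = 0$.

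For the generation clause, recall that a finite family of mutually semiorthogonal admissible subcategories generates the ambient category exactly when their common right orthogonal is zero, and that $F \in \mathcal{A}_{i, \, \mathrm{coh}}^\perp$ if and only if $\Phi_{K_i}^R(F) \caniso 0$. So it remains to show $\bigcap_{i=1}^{n}\ker\big(\Phi_{K_i}^R|_{\Dbcoh(Y)}\big) = 0$. Given $F$ in this intersection, the pullback $\phi_Y^*F$ is annihilated by every $\Phi_{K_i^\prime}^R$ by base change, hence lies in the corresponding intersection over $X^\prime$; but $(Y^\prime, \{X_i^\prime, K_i^\prime\})$ is a relative semiorthogonal decomposition by hypothesis, so that intersection is zero, and faithful flatness of $\phi_Y$ gives $F \caniso 0$. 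Together with the previous paragraph this verifies both conditions of Definition \ref{definition of relative sod}.

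The step I expect to be the main obstacle is the careful verification of the first input --- in particular, identifying the right-adjoint kernel of Lemma \ref{lci fourier-mukai properties}, which is built from $K_i^\dual$ and the relative dualizing complex of a projection from the fibre product $Y \times_X X_i$, with the corresponding kernel over $X^\prime$. This relies on compatibility of that dualizing complex with flat base change, which holds because the projection maps are locally complete intersection morphisms, so it is a shifted line bundle whose formation commutes with flat pullback (cf.~the proof of Lemma \ref{proper lci properties}); and on transporting the units and counits of the various adjunctions correctly through the base change. Once that bookkeeping is done, everything else reduces to the elementary descent furnished by the second input.
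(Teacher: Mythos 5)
The paper does not prove this theorem: it is imported verbatim from \cite[Th.~6.1\,and\,6.2]{conservative-descent}, so there is no internal proof to compare against. Your argument is a correct reconstruction of the conservative-descent strategy of that reference --- reduce fully faithfulness, semiorthogonality, and generation to, respectively, invertibility of the unit $\mathrm{id} \to \Phi_{K_i}^R\Phi_{K_i}$, vanishing of $\Phi_{K_j}^R\Phi_{K_i}$ for $j>i$, and vanishing of $\bigcap_i \ker \Phi_{K_i}^R$, all of which are detected by the conservative functors $\phi_Y^*$, $\phi_{X_i}^*$ once base change of the kernels (including the adjoint kernels built from $p^!\O$, which commute with flat base change precisely because the projections are lci) is established. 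The one point you pass over silently is that the hypothesis gives fully faithfulness of $\Phi_{K_i^\prime}$ only on $\Dbcoh(X_i^\prime)$ while you test the unit on all of $\Dqcoh(X_i^\prime)$; this is repaired either by Lemma \ref{relative decompositions are for all categories} or by noting that the unit is a morphism of coproduct-preserving functors and $\Dqcoh(X_i^\prime)$ is compactly generated by perfect complexes, which lie in $\Dbcoh(X_i^\prime)$.
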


The relative semiorthogonal decompositions induce semiorthogonal decompositions not only for $\Dbcoh(Y)$, but also for other categories such as $\Dqcoh(Y)$ and $\Dperf(Y)$.

\begin{lemma}[\cite{kuznetsov-basechange}]
\label{relative decompositions are for all categories}
Let $(Y, \{ X_i, K_i \}_{i \in [1, n]})$ be a relative semiorthogonal decomposition of $Y$ over $X$. Then
\begin{enumerate}
\item the functors $\Dqcoh(X_i) \to \Dqcoh(Y)$ and $\Dperf(X_i) \to \Dperf(Y)$ induced by $\Phi_{K_i}$ are fully faithful embeddings onto admissible subcategories $\mathcal{A}_{i, \, \mathrm{qc}}$ and $\mathcal{A}_{i, \, \mathrm{perf}}$ of $\Dqcoh(Y)$ and $\Dperf(Y)$ respectively;
\item the sequences $\langle \mathcal{A}_{1, \, \mathrm{qc}}, \ldots, \mathcal{A}_{n, \, \mathrm{qc}} \rangle \subset \Dqcoh(Y)$ and $\langle \mathcal{A}_{1, \, \mathrm{perf}}, \ldots, \mathcal{A}_{n, \, \mathrm{perf}} \rangle \subset \Dperf(Y)$ are semiorthogonal decompositions.
\end{enumerate}
\end{lemma}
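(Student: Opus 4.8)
Since the statement is \cite{kuznetsov-basechange}, the plan is to reproduce its proof. The key point is that every functor occurring in a relative semiorthogonal decomposition — the embeddings $\Phi_{K_i}$, their adjoints from Lemma \ref{lci fourier-mukai properties}, and the resulting projection functors — is a Fourier--Mukai transform over $X$ with perfect kernel, hence commutes with arbitrary coproducts and preserves both $\Dperf$ and $\Dbcoh$. The other ingredient is that for any variety $W$ as in our conventions, $\Dqcoh(W)$ is compactly generated by $\Dperf(W) \subseteq \Dbcoh(W)$, so a localizing subcategory of $\Dqcoh(W)$ containing $\Dbcoh(W)$ is all of $\Dqcoh(W)$, and a nonzero object of $\Dbcoh(W)$ has a nonzero self-map.

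First I would upgrade full faithfulness to $\Dqcoh$. Let $\Psi_i\colon \Dqcoh(Y) \to \Dqcoh(X_i)$ be the right adjoint of $\Phi_{K_i}$. The full subcategory of $\Dqcoh(X_i)$ on which the unit $\mathrm{id} \to \Psi_i \Phi_{K_i}$ is an isomorphism is triangulated and closed under coproducts (both functors commute with coproducts), hence localizing; it contains $\Dbcoh(X_i)$, since there $\Phi_{K_i}$ is fully faithful by hypothesis and $\Psi_i$ restricts to its right adjoint; as $\Dbcoh(X_i)$ generates $\Dqcoh(X_i)$, the unit is an isomorphism throughout, so $\Phi_{K_i}$ is fully faithful on $\Dqcoh$, with admissible image $\mathcal{A}_{i,\mathrm{qc}}$ because it has both adjoints. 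Restricting everything to $\Dperf(X_i) \subseteq \Dbcoh(X_i)$, and using that the adjoints preserve $\Dperf$, yields full faithfulness and admissibility on $\Dperf$ immediately, with no generation argument needed there.

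Next I would check semiorthogonality. For $j > i$, the vanishing $\Hom_Y(\mathcal{A}_{j,\mathrm{qc}}, \mathcal{A}_{i,\mathrm{qc}}) = 0$ is, by adjunction, equivalent to the vanishing of the composite $\Psi_j \circ \Phi_{K_i}\colon \Dqcoh(X_i) \to \Dqcoh(X_j)$. This composite commutes with coproducts, and for $G \in \Dbcoh(X_i)$ the object $A := \Psi_j\Phi_{K_i}G$ lies in $\Dbcoh(X_j)$ and satisfies $\Hom(A, A) \caniso \Hom_Y(\Phi_{K_j}A, \Phi_{K_i}G) = 0$ by the given $\Dbcoh$-decomposition, so $A = 0$; since $\Dbcoh(X_i)$ generates $\Dqcoh(X_i)$, the composite vanishes identically, giving semiorthogonality on $\Dqcoh$, and its restriction gives semiorthogonality on $\Dperf$.

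Finally, generation. The localizing subcategory of $\Dqcoh(Y)$ generated by $\bigcup_i \mathcal{A}_{i,\mathrm{qc}}$ contains each $\mathcal{A}_{i,\mathrm{coh}}$, hence contains $\Dbcoh(Y) = \langle \mathcal{A}_{1,\mathrm{coh}}, \ldots, \mathcal{A}_{n,\mathrm{coh}} \rangle$, hence equals $\Dqcoh(Y)$; together with admissibility and semiorthogonality this is the semiorthogonal decomposition of $\Dqcoh(Y)$. For $\Dperf(Y)$: any $P \in \Dperf(Y) \subseteq \Dbcoh(Y)$ has a filtration coming from the $\Dbcoh$-decomposition with graded pieces in the $\mathcal{A}_{i,\mathrm{coh}}$, produced by the projection functors; these are built from $\Phi_{K_i}$ and their adjoints, all of which preserve $\Dperf$, so the graded pieces in fact lie in $\mathcal{A}_{i,\mathrm{perf}}$, whence $P \in \langle \mathcal{A}_{1,\mathrm{perf}}, \ldots, \mathcal{A}_{n,\mathrm{perf}} \rangle$. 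The step requiring the most care is this last one: one must know that the \emph{projection} functors of the decomposition, and not merely the embeddings $\Phi_{K_i}$, preserve perfect complexes — and this is precisely where the locally-complete-intersection hypotheses enter, through the last part of Lemma \ref{lci fourier-mukai properties}. Everything else is a formal argument on a generating subcategory.
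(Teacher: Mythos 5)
Your argument is correct, and it is essentially a reconstruction of the content of the results that the paper simply cites: the proof in the text is two lines long and consists of invoking \cite{kuznetsov-basechange} (Prop.~4.1, Prop.~4.2 for the induced decompositions of $\Dperf$ and $\Dqcoh$, and Th.~6.2, Rem.~6.3 for the identification of the components with the images of $\Phi_{K_i}$). So you have not taken a different route; you have supplied the compact-generation argument that the citation hides. All the ingredients you use are available in the paper's setup: the adjoints $\Phi_{K'}$, $\Phi_{K''}$ and their preservation of $\Dbcoh$ and $\Dperf$ come from Lemma \ref{lci fourier-mukai properties}, and commutation with coproducts holds because all functors involved are Fourier--Mukai transforms with perfect kernels along the projections $p_1$, $p_2$. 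You also correctly isolate the one step that is not purely formal, namely that the projection functors of the $\Dbcoh$-decomposition preserve perfect complexes, which is exactly where the locally-complete-intersection hypotheses of Definition \ref{definition of relative sod} are used. The only point stated a little quickly is that the triangulated subcategory $\langle \mathcal{A}_{1,\,\mathrm{qc}},\ldots,\mathcal{A}_{n,\,\mathrm{qc}}\rangle$ is itself closed under coproducts (so that containing a set of compact generators forces it to be all of $\Dqcoh(Y)$); this follows because each $\mathcal{A}_{i,\,\mathrm{qc}}$ and each projection functor commutes with coproducts, and is worth a sentence if you write this up in full.
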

\begin{proof}
The semiorthogonal decomposition of $\Dbcoh(Y)$ into $\mathcal{A}_{i, \, \mathrm{coh}}$ induces decompositions for the categories $\Dqcoh(Y)$ and $\Dperf(Y)$ by \cite[Prop.~4.2, Prop.~4.1]{kuznetsov-basechange}. The components are equal to the images of $\Dqcoh(X_i)$ and $\Dperf(X_i)$ under $\Phi_{K}$ by \cite[Th.~6.2,~Rem.~6.3]{kuznetsov-basechange}.
\end{proof}

\section{Total space of the tautological bundle}
\label{section on decomposition for ukn}

Let $V$ be a vector space of dimension $n$, and let $U$ and $Q$ be the tautological subbundle and quotient bundle on $\Gr(k, V)$. Denote by $\Tot(U)$ the total space of $U$ with the projection map $\pi\colon \Tot(U) \to \Gr(k, V)$. Given a full exceptional collection
\[
\Dbcoh(\Gr(k, V)) = \langle E_1, \ldots, E_N \rangle
\]
it is not hard to show that the pullbacks $\pi^*E_i$ generate $\Dbcoh(\Gr(k, V))$. But the semiorthogonality of the sequence usually breaks after the pullback, as shown in Lemma \ref{semiorthogonality on total space}. The goal of this section is to propose a way to work around this problem.

For inspiration, consider the case $k = 1$. Then $\Gr(1, V)$ is the projective space $\P(V)$, the tautological bundle is $\O_{\P(V)}(-1)$, and the total space $\Tot(\O(-1))$ is naturally  identified with the blow-up  $\BL(\A(V), \{ 0 \} )$ of the affine space $\A(V)$ at the origin. The derived category of sheaves on the blow-up admits the following description.

\begin{theorem}[\cite{orlov93}]
Let $j\colon \P(V) \to \BL(\A(V), \{ 0 \})$ be the inclusion of the exceptional divisor, and let $p\colon \BL(\A(V), \{ 0 \} ) \to \A(V)$ be the blowing up map. Then there is a semiorthogonal decomposition
\[
\Dbcoh(\BL(\A(V), \{ 0 \} )) = \langle \,\, j_* \O_{\P(V)}(-n), \, \ldots, \, j_* \O_{\P(V)}(-1), \, p^*\Dbcoh(\A(V)) \,\, \rangle.
\]
\end{theorem}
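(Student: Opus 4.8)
The plan is to run the standard proof of the blow-up formula in this concrete situation. Identify $Y := \BL(\A(V), \{0\})$ with $\Tot(\O_{\P(V)}(-1))$, write $\pi\colon Y \to \P(V)$ for the bundle projection, note that the zero section is exactly the exceptional divisor $E \cong \P(V)$, and keep $p\colon Y \to \A(V)$ for the contraction. Under this identification $\O_Y(E) \cong \pi^*\O_{\P(V)}(-1)$, the conormal bundle of $E$ in $Y$ is $\O_{\P(V)}(1)$, and $p \circ j\colon \P(V) \to \A(V)$ is constant at the origin. I would split the argument into two parts: first, that the listed collection is semiorthogonal and each block is an admissible subcategory; second, that these blocks generate $\Dbcoh(Y)$.

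For the first part the key input is the standard isomorphism $j^* j_* \mathcal{F} \cong \mathcal{F} \oplus (\mathcal{F} \otimes \O_{\P(V)}(1))[1]$ for $\mathcal{F} \in \Dbcoh(\P(V))$, obtained by applying $j^*$ to $0 \to \O_Y(-E) \to \O_Y \to j_*\O_E \to 0$. Feeding this and the adjunction $(j^* \dashv j_*)$ into all the relevant graded Hom-spaces reduces each of them to cohomology of line bundles on $\P(V) \cong \P^{n-1}$; the vanishing $\Hom^\bullet_{\P^{n-1}}(\O, \O(d)) = 0$ for $-n < d < 0$ then gives that each $j_*\O_{\P(V)}(-i)$ is exceptional, that these objects form a semiorthogonal collection in the order in which they appear, and — since $j^* p^* G \cong (p \circ j)^* G$ is a finite direct sum of shifts of $\O_{\P(V)}$ — that $\Hom^\bullet(p^* G, j_*\O_{\P(V)}(-i)) = 0$ for all $G$. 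Full faithfulness of $p^*$ follows from $p_*\O_Y \cong \O_{\A(V)}$ (with vanishing higher direct images, since this is the blow-up of a smooth point) together with the projection formula; as $p$ is a proper morphism of smooth varieties, $p^*$ has a left adjoint $p_!$ and a right adjoint $p_*$ by Lemma \ref{adjoints for proper maps of smooth varieties}, so $p^*\Dbcoh(\A(V))$ is admissible, and the exceptional objects span an admissible subcategory by Lemma \ref{exceptional objects generate admissible subcategories}. Hence the right-hand side is an admissible subcategory $\mathcal{A} \subseteq \Dbcoh(Y)$, and it remains to show $\mathcal{A} = \Dbcoh(Y)$, i.e.\ ${}^\perp\mathcal{A} = 0$.

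For the second part, let $F \in {}^\perp\mathcal{A}$. From $\Hom^\bullet(F, p^* G) = 0$ for all $G$ and the adjunction $(p_! \dashv p^*)$ one gets $p_! F = 0$, i.e.\ $p_*(F \otimes \omega_Y) = 0$; restricting over $\A(V) \setminus \{0\}$, where $p$ is an isomorphism, shows $F|_{Y \setminus E} = 0$, so every cohomology sheaf of $F$ is supported on $E$ and $F$ lies in the triangulated subcategory generated by $j_*\Dbcoh(E)$ (standard d\'evissage along the powers of the ideal of $E$). Using Beilinson's collection on $\P^{n-1}$ and exactness of $j_*$, that subcategory is generated by the pushforwards $j_*\O_{\P(V)}(m)$ for $n$ consecutive values of $m$, which one arranges so that all but one are among the exceptional objects of the decomposition; the remaining pushforward lies in $\mathcal{A}$ because the exact sequences $0 \to \pi^*\O(-i) \to \pi^*\O(-i-1) \to j_*\O_E(-i-1) \to 0$ (restrictions of $0 \to \O_Y(iE) \to \O_Y((i+1)E) \to j_*(\O_Y((i+1)E)|_E) \to 0$) place every $\pi^*\O_{\P(V)}(-i)$ with $0 \le i \le n-1$ in $\mathcal{A}$, starting from $\pi^*\O_{\P(V)} = p^*\O_{\A(V)}$, after which the Koszul resolution of the missing twist on $\P^{n-1}$, pulled back along the flat map $\pi$, expresses the missing pushforward through these. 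Then $F \in \mathcal{A} \cap {}^\perp\mathcal{A}$, so $\Hom^\bullet(F, F) = 0$ and $F = 0$. I expect this generation step — and specifically the verification that the one extra pushforward from the Beilinson collection on $E \cong \P^{n-1}$ falls inside $\mathcal{A}$ — to be the main obstacle, as it is precisely where the pullback block and the exceptional block must be played off against each other.
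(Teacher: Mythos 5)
Your strategy---identify $\BL(\A(V),\{0\})$ with $\Tot(\O_{\P(V)}(-1))$, compute $j^*j_*\mathcal{F} \cong \mathcal{F} \oplus \mathcal{F}(1)[1]$ from the divisor resolution, reduce all semiorthogonality to cohomology of line bundles on $\P^{n-1}$, and prove generation by d\'evissage along the exceptional divisor---is the right one, and is essentially the $k=1$ instance of the argument for Theorem \ref{theorem on decomposition for ukn}. But there is a genuine gap in the semiorthogonality step, and it sits exactly where the statement as printed fails. You assert that every relevant Hom-space reduces to $H^\bullet(\P^{n-1},\O(d))$ with $-n<d<0$, but you never verify which degrees $d$ actually occur, and for the extremal object $j_*\O_{\P(V)}(-n)$ the degree $d=-n$ does occur. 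Concretely, your own formula for $j^*j_*$ gives
\[
\Hom^\bullet\bigl(j_*\O_{\P(V)}(-1),\, j_*\O_{\P(V)}(-n)\bigr) \caniso H^\bullet(\P^{n-1},\O(1-n)) \,\oplus\, H^\bullet(\P^{n-1},\O(-n))[-1],
\]
and $H^{n-1}(\P^{n-1},\O(-n)) = H^{n-1}(\omega_{\P^{n-1}}) = \k \neq 0$; likewise $\Hom^\bullet(p^*\O_{\A(V)}, j_*\O_{\P(V)}(-n)) \caniso H^\bullet(\P^{n-1},\O(-n)) \neq 0$. So the displayed collection is not semiorthogonal. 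A rank count confirms something is wrong: $K_0$ of the total space has rank $n$, while the displayed decomposition would force rank $n+1$.

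The resolution is that the statement (like the Beilinson collection quoted just after it, which lists $n+1$ objects on $\P^{n-1}$) is off by one: the correct decomposition, consistent with Orlov's formula for a center of codimension $n$, with the phrase ``the remaining $n-1$ objects'', and with Theorem \ref{theorem on decomposition for ukn} at $k=1$ (where $B_{1,n-1}^\prime$ has $\binom{n-1}{1}=n-1$ elements), is
\[
\Dbcoh(\BL(\A(V),\{0\})) = \langle\, j_*\O_{\P(V)}(-(n-1)), \ldots, j_*\O_{\P(V)}(-1), \, p^*\Dbcoh(\A(V)) \,\rangle .
\]
With this correction every degree arising in your reduction lies in $[-(n-1),-1]$ and the vanishing you quote does apply. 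The remainder of your argument then goes through: $p^*$ is fully faithful since $p_*\O_Y \caniso \O_{\A(V)}$; orthogonality to $p^*\Dbcoh(\A(V))$ forces an object of the common orthogonal to be supported on the exceptional divisor, hence to lie in $\langle\, j_*\Dbcoh(\P^{n-1})\,\rangle$; and the sequences $0 \to \O_Y(iE) \to \O_Y((i+1)E) \to j_*\O_E(-i-1) \to 0$ together with the twisted Koszul complex on $\P^{n-1}$ place all $n$ consecutive pushforwards $j_*\O_E(m)$, $-(n-1)\le m\le 0$, inside the candidate subcategory, which yields generation. Please rewrite the semiorthogonality step with the index bookkeeping made explicit, since that is where the content is.
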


The proof of this theorem relies on the properties of the blow-up morphism $p$, but its conclusion may be stated using only $\pi$ and $j$ as follows. Since $\Dbcoh(\A(V))$ is generated by the structure sheaf, the last subcategory coincides with $\langle \, \O_{\BL(\A(V), \{ 0 \} )} \, \rangle$. Start with a Beilinson full exceptional collection of the projective space \cite{beil78}:
\[
\Dbcoh(\P(V)) = \langle \,\, \O_{\P(V)}(-n), \, \ldots, \, \O_{\P(V)}(-1), \, \O_{\P(V)} \,\, \rangle .
\]
To construct a semiorthogonal decomposition on $\Tot(\O(-1))$, take the pullback $\pi^*$ of the last object of this sequence, and then the pushforwards of the remaining $n-1$ objects along the zero section embedding $j\colon \P(V) \to \Tot(\O_{\P(V)}(-1))$.

This suggests a general approach: given a total space of a vector bundle and a semiorthogonal decomposition of its base, try pulling back some components and pushing forward the rest of them. Usually this will not be good enough. But if we choose the decomposition of the base carefully, then this strategy works for $\Gr(k, V)$ and $\Tot(U)$. 

We start by a couple of lemmas.
Recall that in our conventions the symbol $\Hom_{\Gr(k, V)}(-, -)$ means the derived functor and it takes values in $D^b(\mathrm{Vect})$.

\begin{lemma}
\label{semiorthogonality of pushforwards on total space}
Let $V, U, Q$ be as above. Let $j\colon \Gr(k, V) \to \Tot(U)$ be the zero section. For any two objects $E, F \in \Dbcoh(\Gr(k, V))$ the following conditions are equivalent:
\begin{enumerate}
\item \label{push semiorthogonality 1} $\langle \, j_* E, \, j_* F \, \rangle$ is a semiorthogonal pair on $\Tot(U)$;
\item \label{push semiorthogonality 2} for any integer $m$, $\Hom_{\Gr(k, V)}(F \otimes \Lambda^m U^\dual, E) = 0$.
\end{enumerate}
\end{lemma}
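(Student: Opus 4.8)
The plan is to compute the graded $\Hom$-space $\Hom_{\Tot(U)}(j_* F, j_* E)$ explicitly: condition (\ref{push semiorthogonality 1}) says exactly that this space vanishes, and I will show that it is canonically a direct sum of shifted copies of the $\Hom$-spaces appearing in (\ref{push semiorthogonality 2}). The two ingredients are the adjunction $j^* \dashv j_*$, which moves the computation down to the Grassmannian, and the Koszul resolution of $j_* \O_{\Gr(k, V)}$, which lets me evaluate $j^* j_* F$.

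First I would observe that the image of the zero section is cut out in $\Tot(U)$ by the vanishing of the tautological section $\tau$ of the bundle $\pi^* U$: over a point $(x, v) \in \Tot(U)$ this section takes the value $v \in U_x$. Since $\mathrm{rk}\, U = k$ equals the codimension of the zero section, $\tau$ is regular, so contraction with it produces a locally free resolution
\[
0 \to \pi^* \Lambda^k U^\dual \to \cdots \to \pi^* \Lambda^1 U^\dual \to \O_{\Tot(U)} \to j_* \O_{\Gr(k, V)} \to 0 .
\]
Then I would use the projection formula for $j$ together with $\pi \circ j = \mathrm{id}_{\Gr(k, V)}$ to write $j_* F \caniso j_* \O_{\Gr(k, V)} \otimes \pi^* F$, which gives $j^* j_* F \caniso (j^* j_* \O_{\Gr(k, V)}) \otimes F$. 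To compute $j^* j_* \O_{\Gr(k, V)}$ I would apply $j^*$ to the Koszul resolution above; its differentials become contraction with $j^* \tau$, the tautological section restricted to the zero section, which is identically zero. Hence the restricted complex has vanishing differentials, and
\[
j^* j_* \O_{\Gr(k, V)} \caniso \bigoplus_{m = 0}^{k} \Lambda^m U^\dual [m] , \qquad j^* j_* F \caniso \bigoplus_{m = 0}^{k} \left( F \otimes \Lambda^m U^\dual \right) [m] .
\]

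Combining this with the adjunction isomorphism $\Hom_{\Tot(U)}(j_* F, j_* E) \caniso \Hom_{\Gr(k, V)}(j^* j_* F, E)$ gives
\[
\Hom_{\Tot(U)}(j_* F, j_* E) \;\caniso\; \bigoplus_{m = 0}^{k} \Hom_{\Gr(k, V)}\!\left( F \otimes \Lambda^m U^\dual, \, E \right) [-m] .
\]
The left-hand side vanishes if and only if each summand on the right does, i.e.\ if and only if $\Hom_{\Gr(k, V)}(F \otimes \Lambda^m U^\dual, E) = 0$ for every $m \in \{0, \ldots, k\}$; and since $\Lambda^m U^\dual = 0$ for $m \notin [0, k]$, this is the same as asking for the vanishing for every integer $m$. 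That establishes the equivalence of (\ref{push semiorthogonality 1}) and (\ref{push semiorthogonality 2}).

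The only point that requires real care is that $j^* j_* \O_{\Gr(k, V)}$ is an honest direct sum of shifted exterior powers, not merely an object equipped with a filtration by them, because the ``only if'' direction needs $\Hom$ out of the whole object to detect each summand separately. Fortunately this is automatic: the matrix entries of the Koszul differential all lie in the ideal sheaf of the zero section, so they vanish after applying $j^*$, and no degeneration-of-a-spectral-sequence argument is needed. Everything else in the argument is formal.
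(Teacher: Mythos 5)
Your proposal is correct and follows essentially the same route as the paper: adjunction $j^*\dashv j_*$, the Koszul resolution of $j_*\O_{\Gr(k,V)}$, the vanishing of its differentials along the zero section to get $j^*j_*\O_{\Gr(k,V)}\caniso\bigoplus_{m=0}^{k}\Lambda^m U^\dual[m]$, and the resulting direct-sum decomposition of $\Hom_{\Tot(U)}(j_*F,j_*E)$. Your closing remark about why one gets an honest direct sum rather than just a filtration is a sound elaboration of the same point the paper makes implicitly.
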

\begin{proof}
Let $\pi\colon \Tot(U) \to \Gr(k, V)$ be the projection morphism. Since $j$ is a section of $\pi$, for any object $F \in \Dbcoh(\Gr(k, V))$ there is an isomorphism $j^* j_* F \caniso F \otimes j^* j_* \O_{\Gr(k, V)}$.

The zero section of $\Tot(U)$ is cut out by the tautological section of the vector bundle $\pi^*U$ on $\Tot(U)$, which is regular. Hence there is a locally free Koszul resolution for $j_*\O_{\Gr(k, V)}$:
\[
0 \to \pi^*\Lambda^k U^\dual \to \ldots \to \pi^* U^\dual \to \O_{\Tot(U)} \to 0.
\]
It may be used to calculate the derived pullback $j^* j_* \O_{\Gr(k, V)}$. The Koszul differentials vanish along the zero section, so there is a direct sum decomposition
$
j^* j_* \O_{\Gr(k, V)} \caniso \bigoplus\nolimits_{i=0}^{k} \Lambda^i U^\dual [i].
$
From this description we get an isomorphism
\[
\begin{aligned}
\MoveEqLeft
\Hom_{\Tot(U)}(j_* F, \, j_* E) \caniso \Hom_{\Gr(k, V)}(j^* j_* F, E) \caniso \bigoplus\nolimits_{i=0}^{k} \Hom_{\Gr(k, V)}(F \otimes \Lambda^i U^\dual, E) [-i].
\end{aligned}
\]
Thus the conditions (\ref{push semiorthogonality 1}) and (\ref{push semiorthogonality 2}) are equivalent.
\end{proof}

\begin{lemma}
\label{tautological subcategories}
Let $V, U, Q$ be as above. For any integer $m \geq 0$ the subcategories 
$\langle \, S^i Q \, \rangle_{i \leq m}$ 
and
$\langle \, \Lambda^i U \, \rangle_{i \leq m}$
of $\Dbcoh(\Gr(k, V))$ coincide. Similarly, 
$\langle \, S^i U \, \rangle_{i \leq m} = \langle \, \Lambda^i Q \, \rangle_{i \leq m}$. Same for the dual pair $(Q^\dual, U^\dual)$.
\end{lemma}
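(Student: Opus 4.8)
Write $W := V \otimes \O_{\Gr(k, V)}$, so that every $\Lambda^j W$ is a trivial bundle. The plan is to derive the stated identities from a pair of standard Koszul-type exact sequences of vector bundles attached to the tautological sequence $0 \to U \xrightarrow{f} W \xrightarrow{g} Q \to 0$ on $\Gr(k, V)$. For each $i \ge 0$ there is an exact sequence
\[
0 \to \Lambda^i U \to \Lambda^i W \to \Lambda^{i-1} W \otimes Q \to \Lambda^{i-2} W \otimes S^2 Q \to \ldots \to W \otimes S^{i-1} Q \to S^i Q \to 0,
\]
with differentials built from $\Lambda^\bullet f$, $g$, and the (co)multiplication maps; and, by dualizing the analogous sequence for the dual tautological sequence $0 \to Q^\dual \to W^\dual \to U^\dual \to 0$, an exact sequence
\[
0 \to S^i U \to W \otimes S^{i-1} U \to \Lambda^2 W \otimes S^{i-2} U \to \ldots \to \Lambda^{i-1} W \otimes U \to \Lambda^i W \to \Lambda^i Q \to 0.
\]
That these are complexes follows as usual from $g \circ f = 0$ together with the vanishing of $\Lambda^2 W \hookrightarrow W^{\otimes 2} \twoheadrightarrow S^2 W$ in characteristic zero.

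The first step is to verify acyclicity. Since this is a Zariski-local question, I would work over an affine open on which the tautological sequence splits, so $W \iso U \oplus Q$; decomposing $\Lambda^j W = \bigoplus_a \Lambda^a U \otimes \Lambda^{j-a} Q$ one sees that the differentials preserve the first exponent $a$ (because $g$ annihilates $U$), so the complex breaks into a direct sum of subcomplexes: for $0 \le a \le i-1$, a copy of $\Lambda^a U$ tensored with the degree-$(i-a)$ component of the Koszul complex $\Lambda^\bullet Q \otimes S^\bullet Q$, and for $a = i$ a subcomplex $[\, \Lambda^i U \isoarrow \Lambda^i U \,]$; each of these is acyclic, the Koszul strands because $i - a \ge 1$. (These sequences are standard; in the range $m \le \min(k, n-k)$ the conclusion below can alternatively be extracted from Lemma \ref{kapranov mutations} applied to one-row and one-column diagrams.)

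The second step extracts the subcategory identities. Recall that an acyclic bounded complex $0 \to T_p \to \ldots \to T_0 \to 0$ in a triangulated category places $T_0$ in the triangulated subcategory generated by $T_1, \ldots, T_p$, and $T_p$ in the one generated by $T_0, \ldots, T_{p-1}$. Fix $m \ge 0$. In the first sequence with parameter $i \le m$, every term other than $\Lambda^i U$ is a finite direct sum of copies of some $S^j Q$ with $j \le i \le m$, so $\Lambda^i U \in \langle \, S^j Q \, \rangle_{j \le m}$; hence $\langle \, \Lambda^i U \, \rangle_{i \le m} \subseteq \langle \, S^j Q \, \rangle_{j \le m}$. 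Conversely, inducting on $i$, the term $S^i Q$ lies in the subcategory generated by $\Lambda^i U$ — which is in $\langle \, \Lambda^j U \, \rangle_{j \le m}$ because $i \le m$ — together with the remaining terms, which are sums of copies of $S^j Q$ with $j < i$ and so lie in $\langle \, \Lambda^l U \, \rangle_{l \le m}$ by the inductive hypothesis; thus $\langle \, S^i Q \, \rangle_{i \le m} \subseteq \langle \, \Lambda^i U \, \rangle_{i \le m}$, and the two subcategories coincide. The same argument applied to the second exact sequence gives $\langle \, S^i U \, \rangle_{i \le m} = \langle \, \Lambda^i Q \, \rangle_{i \le m}$. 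Finally, applying the exact contravariant autoequivalence $(-)^\dual$ of $\Dbcoh(\Gr(k, V))$ to these two equalities — it carries $S^i Q$, $\Lambda^i U$, $S^i U$, $\Lambda^i Q$ to $S^i Q^\dual$, $\Lambda^i U^\dual$, $S^i U^\dual$, $\Lambda^i Q^\dual$ respectively — yields the assertions for the dual pair $(Q^\dual, U^\dual)$.

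I expect the only genuinely delicate point to be the first step: pinning down the two exact sequences with the correct terms and checking acyclicity. Given that, the passage to the subcategory statements is a routine induction, and the dual case is purely formal.
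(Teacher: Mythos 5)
Your proof is correct and follows essentially the same route as the paper: both arguments derive the subcategory identities from the Koszul-type long exact sequences induced by the tautological sequence $0 \to U \to V \otimes \O_{\Gr(k,V)} \to Q \to 0$. The only cosmetic difference is that the paper quotes two such sequences per identity so that each inclusion is read off directly, whereas you extract the reverse inclusion from a single sequence by induction on $i$; both are fine.
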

\begin{proof}
The tautological short exact sequence
\[
0 \to U \to V \otimes \O_{\Gr(k, V)} \to Q \to 0
\]
induces the Koszul long exact sequences
for any $i$:
\[
\begin{aligned}
& 0 \to \Lambda^i U \to \ldots \to U \otimes S^{i-1} V \to S^i V \otimes \O_{\Gr(k, V)} \to S^i Q \to 0 \\
& 0 \to \Lambda^i U \to \Lambda^i V \otimes \O_{\Gr(k, V)} \to \Lambda^{i-1} V \otimes Q \to \ldots \to S^i Q \to 0
\end{aligned}
\]
They show that, for any $m$, $S^m Q$ lies in the subcategory $\langle \, \Lambda^i U \, \rangle_{i \leq m}$, while $\Lambda^m U$ lies in the subcategory $\langle \, S^i U \, \rangle_{i \leq m}$. This proves the first statement. The other statements follow analogously.
\end{proof}
\begin{remark}
In particular, the subcategory $\langle \, S^i U \, \rangle_{0 \leq i \leq n-k}$ contains all symmetric powers of $U$. Indeed, for any $m > n-k$
\[
S^m U \in \langle \, S^i U \, \rangle_{0 \leq i \leq m} = \langle \, \Lambda^i Q \, \rangle_{0 \leq i \leq m} = \langle \, \Lambda^i Q \, \rangle_{0 \leq i \leq n-k} = \langle \, S^i U \, \rangle_{0 \leq i \leq n-k}.
\]
\end{remark}

\begin{lemma}
\label{semiorthogonality on total space}
Let $V, U, Q$ be as above. Denote by $\pi$ and $j$ the projection and the zero section of $\Tot(U)$, by $\pi^\prime$ and $j^\prime$ same for $\Tot(Q^\dual)$. For any two objects $E, F \in \Dbcoh(\Gr(k, V))$ the following conditions are equivalent:
\begin{enumerate}
\item \label{push-pull semiorthogonality 1} $\langle \, \pi^*E, \, \pi^*F \, \rangle$ is a semiorthogonal pair on $\Tot(U)$;
\item \label{push-pull semiorthogonality 2} for any integer $m$, $\Hom_{\Gr(k, V)}(F, E \otimes S^m U^\dual) = 0$;
\item \label{push-pull semiorthogonality 3} for any integer $m$, $\Hom_{\Gr(k, V)}(F \otimes \Lambda^m Q, E) = 0$;
\item \label{push-pull semiorthogonality 4} $\langle \, j_*^\prime E, \, j_*^\prime F \, \rangle$ is a semiorthogonal pair on $\Tot(Q^\dual)$;
\end{enumerate}
\end{lemma}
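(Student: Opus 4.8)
The plan is to prove the three equivalences (\ref{push-pull semiorthogonality 1}) $\Leftrightarrow$ (\ref{push-pull semiorthogonality 2}), (\ref{push-pull semiorthogonality 2}) $\Leftrightarrow$ (\ref{push-pull semiorthogonality 3}), and (\ref{push-pull semiorthogonality 3}) $\Leftrightarrow$ (\ref{push-pull semiorthogonality 4}) separately: the first uses the $(\pi^*,\pi_*)$-adjunction and the projection formula, the last is the Koszul computation in the proof of Lemma \ref{semiorthogonality of pushforwards on total space} transported to $\Tot(Q^\dual)$, and the middle one is a formal consequence of Lemma \ref{tautological subcategories}. None of the three is a serious obstacle.

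For (\ref{push-pull semiorthogonality 1}) $\Leftrightarrow$ (\ref{push-pull semiorthogonality 2}): the morphism $\pi$ is affine, hence $\pi_*$ is right adjoint to $\pi^*$ on $\Dqcoh$, and the projection formula gives $\pi_*\pi^*E \caniso E \otimes \pi_*\O_{\Tot(U)} \caniso \bigoplus_{m \geq 0} E \otimes S^m U^\dual$. As $\Gr(k, V)$ is smooth and projective, $F$ is a compact object of $\Dqcoh(\Gr(k, V))$, so $\Hom(F,-)$ commutes with this direct sum and
\[
\Hom_{\Tot(U)}(\pi^*F, \pi^*E) \caniso \bigoplus\nolimits_{m \geq 0} \Hom_{\Gr(k, V)}(F, E \otimes S^m U^\dual).
\]
The pair $\langle \pi^*E, \pi^*F \rangle$ is semiorthogonal exactly when this graded vector space vanishes, i.e.\ when each summand on the right vanishes, which is condition (\ref{push-pull semiorthogonality 2}). (Since $\pi$ is not proper, this computation is carried out in $\Dqcoh$, where the infinite direct sum $\pi_*\pi^*E$ lives.)

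For (\ref{push-pull semiorthogonality 3}) $\Leftrightarrow$ (\ref{push-pull semiorthogonality 4}): I would repeat the proof of Lemma \ref{semiorthogonality of pushforwards on total space} with $\Tot(Q^\dual)$ in place of $\Tot(U)$. The zero section $j^\prime$ is cut out by the regular tautological section of the bundle $(\pi^\prime)^*Q^\dual$ on $\Tot(Q^\dual)$, so $j^\prime_*\O_{\Gr(k, V)}$ admits a Koszul resolution with terms $(\pi^\prime)^*\Lambda^i Q$; as $j^\prime$ is a section of $\pi^\prime$ and the Koszul differentials vanish along the zero section, $(j^\prime)^*j^\prime_*\O_{\Gr(k, V)} \caniso \bigoplus_{i=0}^{n-k} \Lambda^i Q\,[i]$, and therefore
\[
\Hom_{\Tot(Q^\dual)}(j^\prime_*F, j^\prime_*E) \caniso \bigoplus\nolimits_{i=0}^{n-k} \Hom_{\Gr(k, V)}(F \otimes \Lambda^i Q, E)[-i].
\]
This vanishes if and only if (\ref{push-pull semiorthogonality 3}) holds. (Alternatively one could apply Lemma \ref{semiorthogonality of pushforwards on total space} directly via the isomorphism $\Gr(k, V) \caniso \Gr(n-k, V^\dual)$, under which the universal subbundle becomes $Q^\dual$.)

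For (\ref{push-pull semiorthogonality 2}) $\Leftrightarrow$ (\ref{push-pull semiorthogonality 3}): by the tensor--hom adjunction and local freeness of $Q$, condition (\ref{push-pull semiorthogonality 3}) is equivalent to $\Hom_{\Gr(k, V)}(F, E \otimes \Lambda^m Q^\dual) = 0$ for all $m$, so both (\ref{push-pull semiorthogonality 2}) and (\ref{push-pull semiorthogonality 3}) say that $F$ is left-orthogonal to a family of objects of the form $E \otimes (-)$. By Lemma \ref{tautological subcategories} applied to the dual pair $(Q^\dual, U^\dual)$, one has $\langle S^i U^\dual \rangle_{i \leq m} = \langle \Lambda^i Q^\dual \rangle_{i \leq m}$ in $\Dbcoh(\Gr(k, V))$ for every $m$; since $\Lambda^i Q^\dual = 0$ for $i > n-k$ and, by the argument of the remark following that lemma applied to $U^\dual$, the subcategory $\langle S^i U^\dual \rangle_{i \leq n-k}$ already contains every $S^m U^\dual$, the subcategory generated by all symmetric powers $S^m U^\dual$ coincides with the one generated by all exterior powers $\Lambda^m Q^\dual$ --- both equal $\langle S^i U^\dual \rangle_{i \leq n-k}$. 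Applying the triangulated, direct-summand-preserving functor $E \otimes (-)$, the families $\{E \otimes S^m U^\dual\}_m$ and $\{E \otimes \Lambda^m Q^\dual\}_m$ generate the same thick subcategory of $\Dbcoh(\Gr(k, V))$, and since the vanishing of $\Hom(F,-)$ on a set of objects is the same as its vanishing on the thick subcategory they generate, (\ref{push-pull semiorthogonality 2}) and (\ref{push-pull semiorthogonality 3}) are equivalent. The only point demanding a little care is this last identification: one must use that tensoring with the bounded complex $E$ commutes with the formation of generated subcategories, and that only finitely many of the $\Lambda^m Q^\dual$ are nonzero.
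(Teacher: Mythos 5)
Your proposal is correct and follows essentially the same route as the paper: the $(\pi^*,\pi_*)$-adjunction with the projection formula and compactness for (\ref{push-pull semiorthogonality 1})$\Leftrightarrow$(\ref{push-pull semiorthogonality 2}), Lemma \ref{tautological subcategories} for (\ref{push-pull semiorthogonality 2})$\Leftrightarrow$(\ref{push-pull semiorthogonality 3}), and Lemma \ref{semiorthogonality of pushforwards on total space} (via $\Gr(k,V)\caniso\Gr(n-k,V^\dual)$, which is the alternative you mention parenthetically) for (\ref{push-pull semiorthogonality 3})$\Leftrightarrow$(\ref{push-pull semiorthogonality 4}). Your spelled-out justification of the middle equivalence, reducing to finitely many exterior powers of $Q^\dual$, is a correct elaboration of the paper's terser argument.
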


\begin{proof}
The morphism $\pi\colon \Tot(U) \to \Gr(k, V)$ is an affine morphism given by the relative spectrum of the symmetric algebra $S^\bullet U^\dual$, so by construction
$\pi_* \O_{\Tot(U)} \caniso \oplus_{i=0}^{\infty} S^i U^\dual$. Hence
\[
\begin{aligned}
\MoveEqLeft
\Hom_{\Tot(U)}(\pi^*F, \pi^*E) \caniso \Hom_{\Gr(k, V)}(F, \pi_* \pi^* E) \caniso \\
& \caniso \Hom_{\Gr(k, V)}(F, E \otimes \pi_*\O_{\Tot(U)}) \caniso \Hom_{\Gr(k, V)}(F, E \otimes \left(\oplus_{i=0}^{\infty} S^i U^\dual\right)).
\end{aligned}
\]

The functor $\Hom(F, -)$ commutes with direct sums since any object of $\Dbcoh(\Gr(k, V))$ is compact (see, e.g., \cite{thomason-trobaugh}). Therefore there is an isomorphism
\[
\Hom_{\Tot(U)}(\pi^*F, \pi^*E) \caniso \bigoplus_{i=0}^{\infty} \Hom_{\Gr(k, V)}(F, E \otimes S^i U^\dual).
\] 
This proves the equivalence of (\ref{push-pull semiorthogonality 1}) and (\ref{push-pull semiorthogonality 2}). 

The condition (\ref{push-pull semiorthogonality 2}) implies that 
$\Hom_{\Gr(k, V)}(F, E \otimes A) = 0$
for any object $A \in \langle \, S^i U^\dual \, \rangle_{i \in \Z_{\geq 0}}$, as derived tensor products and $\Hom_{\Gr(k, V)}(F, -)$ commute with taking cones. So by Lemma~\ref{tautological subcategories} the points (\ref{push-pull semiorthogonality 2}) and (\ref{push-pull semiorthogonality 3}) are equivalent. The equivalence between (\ref{push-pull semiorthogonality 3}) and (\ref{push-pull semiorthogonality 4}) is the statement of Lemma~\ref{semiorthogonality of pushforwards on total space} applied on the Grassmannian $\Gr(n-k, V^\dual)$.
\end{proof}

\begin{remark}
There is another way to see a relation between (\ref{push-pull semiorthogonality 1}) and (\ref{push-pull semiorthogonality 3}). Let $p\colon \Tot(U) \to \A(V)$ be the map induced by the embedding $U \to V \otimes \O_{\Gr(k, V)}$. The condition (\ref{push-pull semiorthogonality 1}) is equivalent to the vanishing of
\[
\Hom_{\Tot(U)}(\pi^* F, \pi^* E) \caniso \RGamma(\Tot(U), \pi^* F^\dual \otimes \pi^* E) \caniso \RGamma(\A(V), p_*(\pi^* F^\dual \otimes \pi^* E))  
\]
As $\A(V) \iso \A^n$ is affine, this vanishes if and only if $p_*(\pi^* F^\dual \otimes \pi^* E)$ is zero. The fiber at the origin of this complex is, up to shifts, the sum of $\Hom$'s from the condition (\ref{push-pull semiorthogonality 3}).
\end{remark}

Now we are ready to prove the main theorem.

\begin{theorem}
\label{theorem on decomposition for ukn}
Let $V$ be an $n$-dimensional vector space, $\pi\colon \Tot(U) \to \Gr(k, V)$  the total space of the tautological bundle, $j\colon \Gr(k, V) \to \Tot(U)$ its zero section, $p\colon \Tot(U) \to \A(V)$ the morphism induced by the inclusion $U \subset \O_{\Gr(k, V)} \otimes V$. Let $B_{k, n-k}^\prime \subset B_{k, n-k}$ denote the subset of Young diagrams with exactly $k$ rows, equipped with any order reversing the inclusions. Then the following is a semiorthogonal decomposition of $\Tot(U)$:
\[
\label{decomposition for ukn}
\tag{$\star$}
\begin{aligned}
\Dbcoh(\Tot(U)) = \langle \,\,\,
& \{ \,\, j_* L_{\lambda}(U) \,\, \}_{\lambda \in B_{k, n-k}^\prime} \,\, , \\
& \{ \,\, \pi^* L_{\mu^T}(Q) \otimes p^*\Dbcoh(\A(V)) \,\, \}_{\mu \in B_{k, n-k} \setminus B_{k, n-k}^\prime} \,\,\, \rangle
\end{aligned}
\]
Here for any $\lambda \in B_{k, n-k}^\prime$ the object $j_* L_\lambda(U)$ is exceptional, and for any $\mu \in B_{k, n-k} \setminus B_{k, n-k}^\prime$ the object $\pi^* L_{\mu^T}(Q)$ generates an admissible subcategory equivalent to $\Dbcoh(\A^n)$.
\end{theorem}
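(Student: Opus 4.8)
The plan is to establish three things about the collection $(\star)$: that the listed subcategories generate $\Dbcoh(\Tot(U))$, that the sequence is semiorthogonal in a suitable order, and that every component is admissible and of the asserted type. Two preliminary remarks. First, $\Tot(U)$ and $\A(V)$ are smooth, so $\Dbcoh=\Dperf$ on both. Second, $p$ is proper (indeed projective over $\A(V)$: $\Tot(U)$ is the closed subscheme of $\Gr(k,V)\times\A(V)$ cut out by the regular section $v\mapsto[v]$ of $\mathrm{pr}_1^*Q$) and lci, and $p_*\O_{\Tot(U)}\caniso\O_{\A(V)}$ — over the affine $\A(V)$ this is the Borel--Weil--Bott computation $\RGamma(\Gr(k,V),S^\bullet U^\dual)\caniso S^\bullet V^\dual$ — so $p^*$ is fully faithful. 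For generation: the pullbacks $\pi^*L_\lambda(U)$, $\lambda\in B_{k,n-k}$, generate $\Dbcoh(\Tot(U))$, and I would show each lies in the subcategory $\mathcal{T}$ generated by $(\star)$ by induction on $|\lambda|$. If $\lambda$ has $<k$ rows, apply $\pi^*$ to the resolution of Lemma \ref{kapranov mutations} to write $\pi^*L_\lambda(U)$ in terms of $\pi^*L_{\lambda^T}(Q)$ — which is one of the objects $\pi^*L_{\mu^T}(Q)$ generating an $\A^n$-piece of $(\star)$ — and the $\pi^*L_\mu(U)$ with $\mu\subsetneq\lambda$, which are in $\mathcal{T}$ by induction. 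If $\lambda$ has exactly $k$ rows, then $j_*L_\lambda(U)\in\mathcal{T}$, and the Koszul resolution of $j_*L_\lambda(U)$ by $\pi^*\bigl(L_\lambda(U)\otimes\Lambda^\bullet U^\dual\bigr)$, combined with Pieri's formula \ref{pieri} and the identity $\Lambda^iU^\dual\caniso\Lambda^{k-i}U\otimes(\det U)^\dual$, expresses $\pi^*L_\lambda(U)$ via $j_*L_\lambda(U)$ and bundles $\pi^*L_\nu(U)$ with $|\nu|<|\lambda|$. The base case $\lambda=\emptyset$ is $\O_{\Tot(U)}\in\mathcal{T}$.

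For semiorthogonality, order $B'_{k,n-k}$ reversing inclusion and $B_{k,n-k}\setminus B'_{k,n-k}$ refining inclusion (smaller diagrams first), with the $j_*$-block before the $\pi^*/p^*$-block; this is a legitimate order because no diagram with $<k$ rows contains one with $k$ rows. Three kinds of $\Hom$-vanishing must be verified. Within the $j_*$-block, Lemma \ref{semiorthogonality of pushforwards on total space} reduces it to $\Hom_{\Gr}(L_{\lambda'}(U)\otimes\Lambda^mU^\dual,L_\lambda(U))=0$ for all $m$ whenever $\lambda$ precedes $\lambda'$; after rewriting $\Lambda^mU^\dual$ and applying Pieri \ref{pieri}, the left factor becomes a sum of bundles $L_\eta(U)$ with $\eta\subseteq\lambda'$, none of which contains $\lambda$ (since $\lambda'\not\supseteq\lambda$), so Lemma \ref{kapranov semiorthogonality} gives the vanishing; the same computation with $\lambda'=\lambda$ shows $j_*L_\lambda(U)$ is exceptional (finiteness of its $\Hom$-spaces holds because it is supported on the proper variety $\Gr(k,V)$). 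Within the $\pi^*/p^*$-block, each piece is the thick closure of $\pi^*L_{\mu^T}(Q)$ — the image of the functor $\Psi_\mu:=\pi^*L_{\mu^T}(Q)\otimes p^*(-)$, which is fully faithful (see the next paragraph) — so it suffices that $\Hom^\bullet_{\Tot(U)}(\pi^*L_{\mu'^T}(Q),\pi^*L_{\mu^T}(Q))=0$ when $\mu$ precedes $\mu'$; by Lemma \ref{semiorthogonality on total space} this equals the condition $\Hom_\Gr(L_{\mu'^T}(Q)\otimes\Lambda^mQ,L_{\mu^T}(Q))=0$ for all $m$, which follows from Pieri \ref{pieri} and the $Q$-version of Lemma \ref{kapranov semiorthogonality} because $\mu\not\supseteq\mu'$. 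Finally, for the cross-terms, adjunction for $j$ together with $j^*\pi^*=\mathrm{id}$ and the fact that $p\circ j$ is the constant map to $0\in\A(V)$ reduce $\Hom_{\Tot(U)}(\pi^*L_{\mu^T}(Q)\otimes p^*A,\,j_*L_\lambda(U))$ to a sum of shifts of $\Hom^\bullet_\Gr(L_{\mu^T}(Q),L_\lambda(U))=\RGamma(\Gr(k,V),L_\lambda(U)\otimes L_{\mu^T}(Q)^\dual)$, and Borel--Weil--Bott makes this vanish: the $\GL(V)$-weight obtained by adding $\rho$ to $(-\mu^T_{n-k},\dots,-\mu^T_1\mid\lambda_1,\dots,\lambda_k)$ has its first $n-k$ and its last $k$ coordinates all in $\{1,\dots,n-1\}$ — using that $\lambda$ has exactly $k$ rows of width $\le n-k$ while $\mu$ has $\le k-1$ rows — hence by pigeonhole is not regular.

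It remains to analyze the $\A^n$-pieces, which I expect to be the main obstacle. The functor $\Psi_\mu=\pi^*L_{\mu^T}(Q)\otimes p^*(-)$ is a Fourier--Mukai transform over $\A(V)$ with perfect kernel $\pi^*L_{\mu^T}(Q)$ on $\Tot(U)\times_{\A(V)}\A(V)\caniso\Tot(U)$, so by Lemma \ref{lci fourier-mukai properties} it has both adjoints; it therefore suffices to prove it is fully faithful, after which its image is admissible and equivalent to $\Dbcoh(\A^n)$. By the projection formula and adjunction for $p$, full faithfulness is equivalent to $p_*\pi^*\bigl(L_{\mu^T}(Q)\otimes L_{\mu^T}(Q)^\dual\bigr)\caniso\O_{\A(V)}$. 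In characteristic zero this bundle is $\O_{\Gr}$ plus its traceless summand, so — using the already-noted $p_*\O_{\Tot(U)}\caniso\O_{\A(V)}$ — the task reduces to showing $\RGamma(\Gr(k,V),L_\gamma(Q)\otimes S^iU^\dual)=0$ for every $i\ge 0$ and every nonzero dominant weight $\gamma$ of $\GL_{n-k}$ occurring in the Schur decomposition of $L_{\mu^T}(Q)\otimes L_{\mu^T}(Q)^\dual$. For such a $\gamma$ one has $\sum_j\gamma_j=0$ and $|\gamma_j|\le\mu^T_1\le k-1$, the bound $\mu^T_1\le k-1$ being exactly the hypothesis $\mu\notin B'_{k,n-k}$; hence $\gamma_{n-k}\in\{-(k-1),\dots,-1\}$, and in the Borel--Weil--Bott weight $(\gamma_1,\dots,\gamma_{n-k}\mid 0,\dots,0,-i)+\rho$, whose last $k$ coordinates are $(k-1,k-2,\dots,1,-i)$, the coordinate $\gamma_{n-k}+k$ lies in $\{1,\dots,k-1\}$ and so coincides with one of those entries; the weight is not regular and the cohomology vanishes. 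The engine behind both this vanishing and the cross-term vanishing above is the same pigeonhole phenomenon in Borel--Weil--Bott, fed by the numerical constraints on $\lambda$, $\mu$, and the weights appearing in the relevant endomorphism bundles. With generation, semiorthogonality, and admissibility of every component in hand (Lemma \ref{exceptional objects generate admissible subcategories} for the exceptional objects $j_*L_\lambda(U)$, and the Fourier--Mukai argument for the $\A^n$-pieces), the semiorthogonal decomposition $(\star)$ follows with the stated descriptions of its components.
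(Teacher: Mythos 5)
Your proposal is correct, and while the overall strategy (pull back the Kapranov bundles indexed by diagrams with fewer than $k$ rows, push forward the rest along the zero section, and reduce all semiorthogonality to $\Hom$-vanishing on $\Gr(k,V)$ via the Koszul resolution of the zero section and $\pi_*\O_{\Tot(U)} \caniso S^\bullet U^\dual$) is the same as the paper's, you diverge at the two genuinely delicate points. For fullness, the paper shows the common orthogonal vanishes: given $T$ orthogonal to everything, it plays $\pi_*T$ against $j^*T$ using the triangle $\pi_*(T\otimes I)\to\pi_*T\to j^*T$ and the already-established vanishing $\Hom(F\otimes\Lambda^{>0}U^\dual,F')=0$ within the $j_*$-part to force $\pi_*T=0$. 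Your constructive generation argument --- induction on $|\lambda|$, using Kapranov's resolution of $L_{\lambda^T}Q$ for diagrams with $<k$ rows and the Koszul resolution of $j_*L_\lambda(U)$ plus Pieri and $\Lambda^iU^\dual\caniso\Lambda^{k-i}U\otimes(\det U)^\dual$ for diagrams with exactly $k$ rows --- is more direct and avoids the orthogonal-object gymnastics; it is a legitimate replacement, since combined with semiorthogonality and admissibility of each piece it yields the decomposition. For the cross-term vanishing $\Hom(\pi^*L_{\mu^T}(Q)\otimes p^*A,\,j_*L_\lambda(U))=0$ and for $\mathrm{End}(\pi^*L_{\mu^T}(Q))\caniso\k[\A(V)]$, the paper leans on the mutated collection $(\star\star)$ (so the cross-term is just semiorthogonality on $\Gr(k,V)$) and on the identity $\langle S^iU^\dual\rangle_{i\le m}=\langle\Lambda^iQ^\dual\rangle_{i\le m}$ of Lemma \ref{tautological subcategories}, which recycles the semiorthogonality computation instead of doing new cohomology; you instead run Borel--Weil--Bott directly, and your pigeonhole arguments check out --- the cross-term weight $(-\mu^T_{n-k},\dots,-\mu^T_1\mid\lambda_1,\dots,\lambda_k)+\rho$ is singular precisely because $\lambda_k\ge 1$ and $\mu$ has at most $k-1$ rows, and the endomorphism computation is singular because the nonzero traceless summands $L_\gamma(Q)$ have $\gamma_{n-k}\in[-(k-1),-1]$. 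The trade-off is that your route requires carrying out Bott's algorithm explicitly (and quietly re-derives part of what Lemma \ref{kapranov semiorthogonality} and the mutation Lemma \ref{kapranov mutations} encapsulate), whereas the paper's route keeps all representation theory quarantined inside the cited Kapranov lemmas; but nothing in your argument fails, and the exceptionality, full faithfulness, and admissibility claims are all adequately justified.
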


\begin{proof}
By Theorem \ref{kapranov full exceptional collection} the sequence of vector bundles
\[
\langle \, \{ \, L_\lambda U \, \}_{\lambda \in B_{k, n-k}^\prime}, \,\, \{ \, L_\mu U \, \}_{\mu \in B_{k, n-k} \setminus B_{k, n-k}^\prime} \, \rangle
\]
forms a full exceptional collection on $\Gr(k, V)$. The mutations from the last part of Lemma~\ref{kapranov mutations}, applied to the subsequence $\{ \, L_\mu U \, \}$, produce a full exceptional collection on $\Gr(k, V)$:
\[
\label{decomposition for grkn}
\tag{$\star\star$}
\begin{aligned}
\Dbcoh(\Gr(k, V)) = \, \langle \,\,\,
& \{ \,\, L_{\lambda}(U) \,\, \}_{\lambda \in B_{k, n-k}'} \,\, , \\
& \{ \,\, L_{\mu^T}(Q) \,\, \}_{\mu \in B_{k, n-k} \setminus B_{k, n-k}'} \,\,\, \rangle.
\end{aligned}
\]

Call the first row of (\ref{decomposition for grkn}) the ``$j_*$-part'' and the second row the ``$\pi^*$-part''. The properties of the collection (\ref{decomposition for grkn}) are used in the rest of the proof to establish the existence of (\ref{decomposition for ukn}). The proof is divided into several steps.

\begin{proofstep}
The semiorthogonality of (\ref{decomposition for ukn}) is, by Lemmas \ref{semiorthogonality of pushforwards on total space} and \ref{semiorthogonality on total space}, equivalent to the following three statements, where $\langle \, E, F \, \rangle$ is a semiorthogonal pair from (\ref{decomposition for grkn}).

\begin{itemize}
\item[(a)] If $E$, $F$ are vector bundles from the $j_*$-part, then for any $m \in \Z$
\[
\Hom_{\Gr(k, V)}(F \otimes \Lambda^m U^\dual, E) = 0.
\]
\item[(b)] If $E$ is a vector bundle from the $j_*$-part and $F$ is from the $\pi^*$-part, then
\[
\Hom_{\Tot(U)}(\pi^*F, j_*E) = 0.
\]
\item[(c)] If $E$, $F$ are vector bundles from the $\pi^*$-part, then for any $m \in \Z$
\[
\Hom_{\Gr(k, V)}(F \otimes \Lambda^m Q, E) = 0.
\]
\end{itemize}

Part (b) follows from the semiorthogonality of (\ref{decomposition for grkn}), as 
\[
\Hom_{\Tot(U)}(\pi^*F, j_*E) = \Hom_{\Gr(k, V)}(F, E) = 0.
\]
Part (a) is proved in Step 2 and Part (c) is proved in Step 3.
\end{proofstep}

\begin{proofstep}[semiorthogonality of pushforwards]
If $E, F$ is a semiorthogonal pair from the $j_*$-part, then $F \iso L_\lambda U$ for some $\lambda \in B_{k, n-k}^\prime$, and the Young diagram of $E$ is not contained in $\lambda$. By Lemma \ref{kapranov semiorthogonality} it is enough to show that for any $m > 0$ the tensor product $L_\lambda(U) \otimes \Lambda^{m}U^\dual$ splits into irreducible factors, all of which correspond to Young diagrams strictly inside $\lambda$. This would imply that all direct summands of $F \otimes \Lambda^{m} U^\dual$ are still semiorthogonal to $E$.

 Let $\lambda_i$ be the length of $i$'th row of $\lambda$. Note that
\[
L_\lambda(U) \otimes \Lambda^m U^\dual \caniso L_\lambda(U) \otimes \det(U)^{-1} \otimes \Lambda^{k-m} U \caniso L_{\lambda_1 - 1, \ldots, \lambda_k - 1} U \otimes \Lambda^{k-m} U.
\]
The twist decreases each $\lambda_i$ by one. As $\lambda \in B_{k, n-k}'$, each $\lambda_i \geq 1$, so the twist still corresponds to a Young diagram. By Pieri's formula \ref{pieri}, to get any irreducible summand of the tensor product, after the twist we increase some of $\lambda_i$'s by one. Thus, at best, for $\det(U)^{-1} \otimes \Lambda^k U$, we recover $\lambda$ back, and in all other cases the diagram is strictly smaller than $\lambda$. So the statement in (a) is proved. This argument also proves that
\[
\Hom_{\Tot(U)}(j_* L_\lambda(U), j_* L_\lambda(U)) \caniso \Hom_{\Gr(k, V)}(L_\lambda(U), L_\lambda(U)) = \k \cdot \mathrm{id}_{L_\lambda U}.
\]

The object $j_* L_\lambda(U)$ has proper support, so it is exceptional, and by Lemma \ref{exceptional objects generate admissible subcategories} the subcategory $\langle \, j_* L_\lambda(U) \, \rangle$ is admissible for any $\lambda \in B_{k, n-k}'$.
\end{proofstep}

\begin{proofstep}[semiorthogonality of pullbacks]
Part (c) is similar. If $E, F$ is a semiorthogonal pair from the $\pi^*$-part, then $F \iso  L_{\mu^T} Q$ for some $\mu \in B_{k, n-k} \setminus B_{k, n-k}^\prime$ and the Young diagram of $E$ does not contain $\mu$. By Lemma \ref{kapranov semiorthogonality} it is enough to show that for any $\mu \in B_{k, n-k} \setminus B_{k, n-k}'$ the tensor product $L_{\mu^T}(Q) \otimes \Lambda^m(Q)$ splits into irreducible factors, all of which correspond to Young diagrams containing $\mu^T$, but still lying inside the $(n-k) \times k$-rectangle.

Let $\mu^T_i$ be the length of the $i$'th column of $\mu$. By definition each $\mu^T_i \leq k-1$ and then Pieri's formula for the weights of irreducible summands of $L_{\mu^T}(Q) \otimes \Lambda^m(Q)$ increases some of $\mu^T_i$ by 1. The new weights are still inside the box, so (c) is satisfied and the collection is semiorthogonal.
\end{proofstep}

\begin{proofstep}[endomorphisms of pullbacks, admissibility]
Unlike the case (a), the argument does not immediately describe the endomorphism algebra of $\pi^* L_{\mu^T}(Q)$, but it is not very far from it. Consider the canonical map 
\[
\phi\colon \O_{\Gr(k, V)} \to L_{\mu^T}(Q)^\dual \otimes L_{\mu^T}(Q).
\]
We claim that $\pi^*\phi$ induces an isomorphism on cohomology of pullbacks. Equivalently, for any integer $m$, the map $\phi \otimes S^m U^\dual$ induces an isomorphism on cohomology. Then the same holds for the morphism $\phi \otimes A$ for any object $A \in \langle \, \{ S^m U^\dual \}_{m \in \Z} \, \rangle$. By Lemma \ref{tautological subcategories} there is a different set of generators for this subcategory, the sequence of exterior powers $\Lambda^m Q^\dual$. For the zeroth exterior power $\Lambda^0 Q^\dual \caniso \O_{\Gr(k, V)}$ the map on cohomology induced by $\phi$ is an isomorphism since (\ref{decomposition for grkn}) is an exceptional collection. For positive exterior powers the cohomology of the source object  $\Lambda^m Q^\dual$ vanishes, and the cohomology of the target is
\[
\RGamma(\Gr(k, V), L_{\mu^T}(Q)^\dual \otimes L_{\mu^T}(Q) \otimes \Lambda^{m} Q^\dual) \caniso \Hom_{\Gr(k, V)}(L_{\mu^T}(Q) \otimes \Lambda^{m} Q, L_{\mu^T}(Q))
\]
which also is zero, as explained in the proof of semiorthogonality. Thus
\[
\begin{aligned}
\MoveEqLeft[6]
\Hom_{\Tot(U)}(\pi^* L_{\mu^T}(Q), \pi^* L_{\mu^T}(Q)) \caniso \RGamma(\Gr(k, V), \pi_*\pi^* \O_{\Gr(k, V)}) \caniso \\
& \caniso \mathrm{R}^0\Gamma(\Gr(k, V), \oplus_{m=0}^{\infty}\, S^m U^\dual)\caniso \oplus_{m=0}^\infty\, S^m V^\dual \caniso \k[\A(V)].
\end{aligned}
\]

In other words, this computation proves that for any $\mu \in B_{k, n-k} \setminus B_{k, n-k}'$ the functor
\[
A \in \Dbcoh(\A(V)) \,\, \mapsto \,\, \pi^* L_{\mu^T}(Q) \otimes p^*(A) \,\, \in \Dbcoh(\Tot(U))
\]
is fully faithful, and its image is equal to $\langle \, \pi^*L_{\mu^T}(Q) \, \rangle$ \cite[(4.3)]{keller94}. Note that $p$ is proper since it factors as a composition $\Tot(U) \hookrightarrow \Gr(k, V) \times \A(V) \to \A(V)$, so by
Lemma \ref{adjoints for proper maps of smooth varieties} 
the functor $p^*$ has both left and right adjoints.  The same is true for tensor multiplication by  the bundle $L_{\mu^T}(Q)$, hence the inclusion functor $\langle \, \pi^*L_{\mu^T}(Q) \, \rangle \hookrightarrow \Dbcoh(\Tot(U))$ is an embedding of an admissible subcategory.

\end{proofstep}

\begin{proofstep}[completeness]
It remains to show that the collection is full. All subcategories are admissible, so we consider the common orthogonal. Let $T \in \Dbcoh(\Tot(U))$ be an object which is left-orthogonal to the first row of (\ref{decomposition for ukn}) and right-orthogonal to the second row of (\ref{decomposition for ukn}), i.e.,
\[
\begin{array}{ll}
\forall \lambda \in B_{k, n-k}' \,\, & \Hom_{\Tot(U)}(T, \,\, j_* L_\lambda(U)) = 0. \\ 
\forall \mu \in B_{k, n-k} \setminus B_{k, n-k}' \,\, & \Hom_{\Tot(U)}(\pi^*L_{\mu^T}(Q), \, T) = 0.
\end{array}
\]

We plan to show that $T$ is zero by considering the related objects $\pi_* T$ and $j^* T$ on the Grassmannian.
The adjunction immediately implies that $\pi_* T$ is right-orthogonal
to each bundle from the $\pi^*$-part. By the completeness of the collection (\ref{decomposition for grkn}) the object $\pi_* T$ lies in the subcategory of $\Dqcoh(\Gr(k, V))$ generated by the $j_*$-part. Similarly, $j^* T$ lies in the subcategory generated by the $\pi^*$-part. 

There is a natural map $\pi_* T \to j^* T$, arising from the canonical morphism $\varphi_T\colon T \to j_* j^*T$ by an application of $\pi_*$. We describe its cone next. The map $\varphi_T$ can also be obtained by multiplying the natural morphism $\O_{\Tot(U)} \to j_* \O_{\Gr(k, V)}$ by $T$. So the cone of $\varphi_T$ is $T \otimes I[1]$, where $I$ is the ideal sheaf of the zero section in $\Tot(U)$. The pushforward of $\varphi_T$ along $\pi$ leads to a triangle  in the category $\Dqcoh(\Gr(k, V))$:
\[
\pi_*(T \otimes I) \to \pi_* T \to j^* T \to \pi_*(T \otimes I[1]).
\]

Recall that there exists a Koszul resolution
\[
j_* \O_{\Gr(k, V)} \caniso [ \,\, \pi^* \Lambda^k U^\dual \to \ldots \to \pi^*\Lambda^2 U^\dual \to \pi^* U^\dual \to \O_{\Tot(U)} \,\, ].
\]
The map $\O_{\Tot(U)} \to j_* \O_{\Gr(k, V)}$ corresponds to the inclusion of the last term. Thus the ideal sheaf $I$ lies in the subcategory generated by $\pi^*\Lambda^{> 0} U^\dual$. Hence the object $C := \pi_*(T \otimes I)$ lies in the subcategory generated by $\pi_* T \otimes \Lambda^{> 0} U^\dual$. As we show below, the existence of this triangle and the semiorthogonality properties of the $j_*$-part imply that $\pi_* T = 0$.

%
%
%

Assume that $\pi_* T \neq 0$. Since $\pi_* T$ lies in the subcategory generated by the $j_*$-part, it cannot be left-orthogonal to it. So let $F$ be a vector bundle from the $j_*$-part which admits a nonzero map $f\colon \pi_* T \to F[i]$ for some shift $i$. We can choose $F$ to be the first vector bundle in the sequence (\ref{decomposition for grkn}) with this property, so $\pi_*T$ lies in the subcategory generated by $F$ and further vector bundles from the $j_*$-part. In part (a) of semiorthogonality we showed that there are no nonzero maps $F \otimes \Lambda^{> 0} U^\dual \to F$ of any degree, and also no maps $F' \otimes \Lambda^{\bullet} U^\dual \to F$ for all further components $F'$ of $\pi_* T$. Consequently, there are no nonzero maps $\pi_*T \otimes \Lambda^{>0}U^\dual \to F$.

So in the diagram below
\[
\begin{tikzcd}
C \arrow[r] \arrow[rd, "", swap] & \pi_* T \arrow[r] \arrow[d, "f"] & j^* T \arrow[r] \arrow[ld, dashed] & C[1] \\
& F[i]
\end{tikzcd}
\]
the solid diagonal arrow is zero and the dashed arrow exists. The object $j^* T$ lies in the subcategory generated by the $\pi^*$-part, $F$ is in the $j_*$-part, so by the semiorthogonality the dashed arrow, and hence the map $f$ itself, must be zero. This contradicts the choice of $F$, so the pushforward $\pi_* T$ is zero. As the morphism $\pi$ is affine, this implies $T = 0$. The common orthogonal to the collection (\ref{decomposition for ukn}) is zero, hence it is complete.\qedhere
\end{proofstep}
\end{proof}

\section{Global situation}
\label{section on relative decomposition for grs}

The situation from Theorem \ref{theorem on decomposition for ukn} may be globalized. Let $X$ be a Cohen--Macaulay variety of dimension $N$, $E$ a vector bundle on $X$ of rank $n$. Let $s \in \Gamma(X, E)$ be a regular section, $Z \subset X$ its zero locus. One way to describe the blow-up of $X$ in $Z$ is as a subvariety of $\P(E)$. Namely, it is the locus of lines in the fibers of $E$ which contain the value of $s$. Over the nonvanishing locus of $s$, the line is uniquely determined, but when $s = 0$, the entire projectivization of the fiber is included. This construction makes sense not only for $\P(E) \caniso \Gr_X(1, E)$, but for all relative Grassmannians. A formal definition may be given as follows. 

\begin{definition}
\label{definition of grs}
Let $q\colon \Gr_X(k, E) \to X$ be the relative Grassmannian, $U$ and $Q$ the relative tautological subbundle and quotient bundle. Let $\tilde{s} \in \Gamma(\Gr_X(k, E), Q)$ be the section arising from $q^*(s) \in \Gamma(\Gr_X(k, E), q^*E)$ and the quotient map $q^*E \to Q$. 
Denote by $\Gr_s(k, E)$ the zero locus of $\tilde{s}$, i.e.,~the locus of $k$-dimensional subspaces which contain the value of $s$.
\end{definition}

By an abuse of notation, the restrictions of tautological bundles on $\Gr_X(k, E)$ to $\Gr_s(k, E)$ are also denoted by $U$ and $Q$. It is easy to see that $\Gr_s(1, E)$ is, indeed, isomorphic to the blow-up of $X$ in $Z$. 

The goal of this section is to show that the category $\Dbcoh(\Gr_s(k, E))$ has a semiorthogonal decomposition into several copies of $\Dbcoh(X)$ and $\Dbcoh(Z)$. The proof of this statement (Theorem \ref{theorem on relative decomposition for grs}) is more or less a formal consequence of Theorem \ref{theorem on decomposition for ukn} and the theory of relative semiorthogonal decompositions, set up in subsection \ref{linear subcategories}. First, let us discuss the relation between Definition \ref{definition of grs} and Theorem \ref{theorem on decomposition for ukn}.

\begin{numberedexample}
\label{tot is a special case of grs}
Let $V$ be a vector space of dimension $N$, $X$ the affine space $\A(V)$, $E$ the trivial bundle $\O_X \otimes V$, and let $s$ be the tautological section. Then $\Gr_s(k, E)$ is a subvariety of the product $\A(V) \times \Gr(k, V)$, consisting of pairs $(v \in V, V_k \subset V)$ such that $v \in V_k$. The projection $\pi\colon \Gr_s(k, \O \otimes V) \to \Gr(k, V)$ identifies $\Gr_s(k, E)$ with the total space of the tautological bundle $\Tot(U_{\Gr(k, V)})$.
\end{numberedexample}

Thus the variety considered in Theorem \ref{theorem on decomposition for ukn} is a special case of Definition \ref{definition of grs}. As will be shown in the proof of Theorem \ref{theorem on relative decomposition for grs}, this special case is a universal example, i.e., any $\Gr_s(k, E)$ flat-locally on $X$ is isomorphic to $\Tot(U) \to \A^n$. Since the derived category of the universal example has already been studied in Theorem \ref{theorem on decomposition for ukn}, it only remains to show that the machinery of relative semiorthogonal decompositions applies to obtain the general result.

The fibers of $\Gr_s(k, E)$ over the points of $Z$ are just $\Gr_Z(k, E|_Z)$. Let $j$, $i$, $p$, $p_Z$ be maps described by the following diagram.

\begin{center}
\begin{tikzcd}
\Gr(k, E|_Z) \arrow[r, "j", hookrightarrow] \arrow[d, "p_Z"] & \Gr_s(k, E) \arrow[d, "p"] \arrow[r, hookrightarrow] & \Gr_X(k, E) \\
Z \arrow[r, "i", hookrightarrow] & X
\end{tikzcd}
\end{center}

\begin{lemma}
\label{maps in grs are lci}
The maps $j, i, p, p_Z$ are projective locally complete intersection morphisms.
\end{lemma}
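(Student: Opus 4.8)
The plan is to reduce the lemma to two regularity statements, after which projectivity and the lci property are formal. Projectivity is clear in every case: $i$ and $j$ are closed immersions, $p$ factors as the closed immersion $\Gr_s(k, E) \hookrightarrow \Gr_X(k, E)$ followed by the projective relative Grassmannian $q\colon \Gr_X(k, E) \to X$, and $p_Z$ is itself a relative Grassmannian, since $s|_Z = 0$ forces $\Gr_s(k, E) \times_X Z = \Gr(k, E|_Z)$ scheme-theoretically, with $p_Z$ its structure map. For the lci property the morphism $i$ needs no argument: $s$ is a regular section of $E$, so $Z \hookrightarrow X$ is a regular closed embedding, hence lci. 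The real point is to show that the tautological section $\tilde s \in \Gamma(\Gr_X(k, E), Q)$ is regular; granting this, $\Gr_s(k, E) \hookrightarrow \Gr_X(k, E)$ is a regular closed embedding, and $p$, being the composition of a regular closed embedding with the smooth morphism $q$, is lci.

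To prove that $\tilde s$ is regular I would use that $\Gr_X(k, E)$, being smooth over the Cohen--Macaulay scheme $X$, is Cohen--Macaulay of dimension $N + k(n-k)$, together with the standard criterion that a section of a rank-$r$ vector bundle on a Cohen--Macaulay scheme is regular as soon as its zero locus has codimension at least $r$ at every point. Here $r = n-k$ is the rank of $Q$, and by Krull's theorem the zero locus $Z(\tilde s) = \Gr_s(k, E)$ has codimension at most $n-k$ automatically, so it suffices to bound the dimension of each of its irreducible components by $N + (k-1)(n-k)$. Over the open set $X \setminus Z$ the section $s$ is nowhere vanishing and trivialises a line subbundle of $E$, so there $\Gr_s(k, E)$ is a $\Gr(k-1, n-1)$-bundle over $X \setminus Z$, of dimension $N + (k-1)(n-k)$; over $Z$ the fibrewise condition is vacuous, and $\Gr(k, E|_Z)$ has dimension $(N - n) + k(n-k) = N + (k-1)(n-k) - k$. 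An irreducible component of $\Gr_s(k, E)$ either meets the open part, in which case, being irreducible, it has dimension $\le N + (k-1)(n-k)$, or is contained in the part over $Z$, in which case it has even smaller dimension; this gives the required bound. As a by-product, $\Gr_s(k, E)$ is itself Cohen--Macaulay of pure dimension $N + (k-1)(n-k)$.

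Granting the above, $p_Z$ is smooth (it is a relative Grassmannian), hence lci, and for $j$ I would argue in the same spirit: on $\Gr_s(k, E)$ the vanishing of $\tilde s$ forces the pulled-back section $p^*s$ of $p^*E$ to factor through the subbundle $U$, yielding a section $s_U \in \Gamma(\Gr_s(k, E), U)$ whose zero locus is exactly $\Gr_s(k, E) \times_X Z = \Gr(k, E|_Z)$; since $\Gr_s(k, E)$ is Cohen--Macaulay and this zero locus has codimension $k$, the rank of $U$, in it, the section $s_U$ is regular and $j$ is a regular closed embedding, hence lci. The main obstacle is precisely the regularity of $\tilde s$ (and then of $s_U$): the delicate point is that $\Gr_s(k, E)$ is \emph{not} equidimensional over $X$ --- the relative dimension jumps from $(k-1)(n-k)$ to $k(n-k)$ along $Z$ --- so the codimension estimate over $Z$ must be checked separately, and this is where the Cohen--Macaulay hypothesis on $X$ enters. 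An alternative which avoids the dimension count is to observe that the assertion is Zariski-local on $X$: trivialising $E$ over an affine open $X_0 \subseteq X$ identifies $\Gr_s(k, E|_{X_0})$ with $X_0 \times_{\A^n} \Tot(U)$ via the morphism $X_0 \to \A^n$ assembled from the components of $s$, as in the local form of Example \ref{tot is a special case of grs}, so that $j, i, p, p_Z$ become base changes of the corresponding maps for $\Tot(U) \to \A^n$; those are projective lci morphisms by inspection, and both properties are stable under base change.
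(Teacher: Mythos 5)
Your main argument is correct and essentially identical to the paper's: projectivity by factoring through $\Gr_X(k,E)$, regularity of $\tilde s$ via the Cohen--Macaulay codimension criterion and the dimension count over $X\setminus Z$ and over $Z$, and regularity of $j$ by lifting $p^*s$ to a section $\bar s$ of $U$ cutting out $\Gr_Z(k,E|_Z)$ in codimension $k$. Your componentwise phrasing of the dimension estimate is, if anything, slightly more careful than the paper's.

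One caution about the ``alternative which avoids the dimension count'': as stated it does not work. The identification $\Gr_s(k,E|_{X_0})\cong X_0\times_{\A^n}\Tot(U)$ is fine, but the lci property is \emph{not} stable under arbitrary base change (a Koszul-regular immersion can become non-regular after a non-flat base change), and the map $X_0\to\A^n$ assembled from the components of $s$ is in general not flat --- the paper only arranges flatness of $W_z\to\A^n$ after shrinking around points of $Z$, using precisely the regularity of $s$ there, and says nothing about flatness away from $Z$. So the base-change route still requires a regularity/flatness input of the same nature as the dimension count; it is not a genuine shortcut.
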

\begin{proof}
The morphism $i$ is a regular closed embedding as by assumption $Z$ is the zero locus of a regular section $s \in \Gamma(X, E)$. The morphism $p_Z$ is the relative Grassmannian over $Z$. The morphism $p$ factors as $\Gr_s(k, E) \hookrightarrow \Gr_X(k, E) \to X$, where the latter map is smooth and projective, while the former is the zero locus of the section $\tilde{s} \in \Gamma(\Gr_X(k, E), Q)$. Let us show that $\tilde{s}$ is a regular section. 

Since $\Gr_X(k, E)$ is Cohen--Macaulay, we only need to count the dimensions. The preimage of the complement $X \setminus Z$ along $p$ has dimension $N + \dim \Gr(k-1, E/[s]) = N + (k-1)(n-k)$. The preimage of $Z$ along $p$ has dimension $(N - n) + k(n-k)$, a strictly smaller number, so $\dim \Gr_s(k, E) = N + (k-1)(n-k)$. Thus $\codim_{\Gr_X(k, E)}(\Gr_s(k, E)) = n-k = \mathrm{rk}(Q)$, i.e., the section $\tilde{s}$ is regular, hence $p$ is a locally complete intersection morphism.

The morphism $j\colon \Gr_Z(k, E|_Z) \to \Gr_s(k, E)$ is a closed embedding. As shown above, the codimension of this embedding is $k$. Consider the section $p^*(s) \in \Gamma(\Gr_s(k, E), p^*E)$. It vanishes after the composition with $p^*E \to Q$, so it 
lifts to a section $\bar{s} \in \Gamma(\Gr_s(k, E), U)$ of the subbundle $U \subset p^* E$ of rank $k$. It cuts out exactly $\Gr_Z(k, E|_Z)$, so $\bar{s}$ is a regular section, hence $j$ is a regular closed embedding.
\end{proof}

\begin{lemma}
\label{decomposition for ukn is linear}
Let $V, U, Q, \pi, j, p$, $B_{k, n-k}$, $B_{k, n-k}^\prime$ be as in Theorem \textup{\ref{theorem on decomposition for ukn}}.
The semiorthogonal decomposition {\rm (\ref{decomposition for ukn})} of the category $\Dbcoh(\Tot(U))$ constructed in Theorem \textup{\ref{theorem on decomposition for ukn}} comes from a relative semiorthogonal decomposition for the variety $\Tot(U)$ over $\A(V)$.
\end{lemma}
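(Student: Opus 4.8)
The plan is to exhibit explicitly the data $(\Tot(U), \{X_i, K_i\})$ of a relative semiorthogonal decomposition over $X := \A(V)$ in the sense of Definition~\ref{definition of relative sod}, arranged so that the induced decomposition of $\Dbcoh(\Tot(U))$ is precisely~(\ref{decomposition for ukn}); all the conditions of that definition will then follow from what has already been established in Theorem~\ref{theorem on decomposition for ukn}. Note first that $\Tot(U)$ is projective over $\A(V)$: by Example~\ref{tot is a special case of grs} the map $p$ factors as the closed embedding $\Tot(U) \hookrightarrow \Gr(k, V) \times \A(V)$ followed by the projection to $\A(V)$. For $\lambda \in B_{k, n-k}^\prime$ I would take $X_\lambda := \Spec \k$ with structure morphism the inclusion of the origin $\{0\} \hookrightarrow \A(V)$ (a closed immersion, hence projective), and for $\mu \in B_{k, n-k} \setminus B_{k, n-k}^\prime$ I would take $X_\mu := \A(V)$ with structure morphism the identity; the $X_i$ are ordered with all the $X_\lambda$'s first, in the order chosen on $B_{k,n-k}^\prime$, followed by the $X_\mu$'s.

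The first substantive step is to compute the relevant fibre products and check the locally complete intersection hypotheses. The base change $\Tot(U) \times_{\A(V)} \{0\}$ is the fibre $p^{-1}(0)$; since $p$ is induced by the inclusion of the subbundle $\pi^*U \subset V \otimes \O_{\Tot(U)}$, this fibre is the zero locus of the composite of the tautological section of $\pi^*U$ with that inclusion, which cuts out the same closed subscheme as the tautological section of $\pi^*U$ itself. By the Koszul resolution recalled in the proof of Lemma~\ref{semiorthogonality of pushforwards on total space} this section is regular and cuts out the zero section $j\colon \Gr(k, V) \hookrightarrow \Tot(U)$, so the projection $\Tot(U) \times_{\A(V)} \{0\} = \Gr(k, V) \to \Tot(U)$ is the regular closed embedding $j$, an lci morphism, while the projection $\Gr(k, V) \to \Spec \k$ is smooth, also lci. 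For the components $X_\mu = \A(V)$ the fibre product $\Tot(U) \times_{\A(V)} \A(V)$ is just $\Tot(U)$, with projections the identity and $p$; the latter is lci since it factors as a regular closed embedding into $\Gr(k,V) \times \A(V)$ followed by a smooth projection. Thus all the maps occurring in Definition~\ref{definition of relative sod} are projective lci morphisms and the Fourier--Mukai formalism of Lemma~\ref{lci fourier-mukai properties} applies.

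Next I would write down the kernels. For $\lambda \in B_{k, n-k}^\prime$ set $K_\lambda := L_\lambda(U)$, a vector bundle on $\Gr(k, V) = X_\lambda \times_X \Tot(U)$ and hence perfect; unwinding the definition, $\Phi_{K_\lambda}$ is the composite of the equivalence $\Dbcoh(\Spec \k) \isoarrow \langle \, L_\lambda(U) \, \rangle$ with $j_*$, so it sends the generator $\k$ to $j_*L_\lambda(U)$. For $\mu \in B_{k, n-k} \setminus B_{k, n-k}^\prime$ set $K_\mu := \pi^*L_{\mu^T}(Q)$, a vector bundle on $\Tot(U) = X_\mu \times_X \Tot(U)$; then $\Phi_{K_\mu}(A) \caniso \pi^*L_{\mu^T}(Q) \otimes p^*A$, which is exactly the functor $\Dbcoh(\A(V)) \to \Dbcoh(\Tot(U))$ appearing in Step~4 of the proof of Theorem~\ref{theorem on decomposition for ukn}.

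It then remains only to check conditions (1) and (2) of Definition~\ref{definition of relative sod}, and these are precisely the assertions of Theorem~\ref{theorem on decomposition for ukn}: each $j_*L_\lambda(U)$ is exceptional and generates an admissible subcategory, so $\Phi_{K_\lambda}$ is fully faithful onto it; each $\Phi_{K_\mu}$ is fully faithful onto an admissible subcategory equivalent to $\Dbcoh(\A(V))$; and the sequence of these subcategories, in the order above, is the semiorthogonal decomposition~(\ref{decomposition for ukn}). I expect the only genuine work to be the bookkeeping of the second paragraph — identifying $p^{-1}(0)$ scheme-theoretically with the zero section and verifying that $p$, $j$, and the structure maps of the $X_i$ are projective lci morphisms — since everything else is a translation of Theorem~\ref{theorem on decomposition for ukn} into the language of subsection~\ref{linear subcategories}.
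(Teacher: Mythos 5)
Your proposal is correct and follows essentially the same route as the paper: exhibit the kernels $L_\lambda(U)$ on $\{0\}\times_{\A(V)}\Tot(U)\caniso\Gr(k,V)$ and $\pi^*L_{\mu^T}(Q)$ on $\A(V)\times_{\A(V)}\Tot(U)\caniso\Tot(U)$, identify the resulting Fourier--Mukai functors with those in the decomposition (\ref{decomposition for ukn}), and reduce conditions (1) and (2) of Definition \ref{definition of relative sod} to Theorem \ref{theorem on decomposition for ukn}. The only cosmetic difference is that you verify the projective lci hypotheses directly, whereas the paper delegates them to Example \ref{tot is a special case of grs} and Lemma \ref{maps in grs are lci}.
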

\begin{proof}
By Example \ref{tot is a special case of grs} and Lemma \ref{maps in grs are lci} it is enough to show that the components of the decomposition (\ref{decomposition for ukn}) are images of relative Fourier--Mukai functors with perfect kernels.

Some components of (\ref{decomposition for ukn}) are generated by exceptional objects of the form $j_* L_\lambda(U)$ for Young diagrams $\lambda \in B_{k, n-k}^\prime$. Let $Z = \{ 0 \} $ denote the origin point of $\A(V)$. Let $\Phi_{\lambda}$ be the functor $\Dbcoh(Z) \to \Dbcoh(\Tot(U))$ defined by the formula
\[
C \in \Dbcoh(Z) \left(\caniso D^b(\mathrm{Vect})\right) \, \mapsto \, C \otimes j_* L_\lambda(U).
\]
It is easy to see that $\Phi_\lambda$ is a Fourier--Mukai transform with the perfect kernel $L_\lambda(U)$ on the product $Z \times_{\A(V)} \Tot(U) \caniso \Gr(k, V)$. Since $j_* L_\lambda(U)$ is an exceptional object, the functor $\Phi_\lambda$ is fully faithful and its image coincides with $\langle \, j_* L_\lambda(U) \, \rangle$ (see, e.g., \cite[Th.~6.2]{bondal-exceptional}).

The other kind of subcategories used in (\ref{decomposition for ukn}) are those generated by the objects $\pi^* L_{\mu^T}(Q)$ for Young diagrams $\mu\in B_{k, n-k} \setminus B_{k, n-k}^\prime$. Let $\Psi_{\mu^T}\colon \Dbcoh(\A(V)) \to \Dbcoh(\Tot(U))$ be the functor defined by 
\[
A \in \Dbcoh(\A(V)) \, \mapsto \, p^*(A) \otimes \pi^*L_{\mu^T}(Q).
\]
It is easy to see that $\Psi_{\mu^T}$ is a Fourier--Mukai transform with the perfect kernel $\pi^* L_{\mu^T}(Q)$ on the product $\A(V) \times_{\A(V)} \Tot(U) \caniso \Tot(U)$. The fully faithfulness of $\Psi_{\mu^T}$ has been established during the proof of Theorem \ref{theorem on decomposition for ukn} by computing the endomorphism algebra 
of $\pi^* L_{\mu^T}(Q)$
and applying \cite[(4.3)]{keller94}.
\end{proof}

Now we can state and prove the main theorem of this section.

\begin{theorem}
\label{theorem on relative decomposition for grs}
Let $X$ be a Cohen--Macaulay variety of dimension $N$, $E$ a vector bundle on $X$ of rank~$n$. Let $s \in \Gamma(X, E)$ be a regular section, $Z \subset X$ its zero locus. 
Let $B_{k, n-k}^{\prime}$ denote the subset of Young diagrams with exactly $k$ rows, equipped with any order reversing the inclusions. In the notation of Lemma \textup{\ref{maps in grs are lci}}, define two families of functors:
\begin{enumerate}
\item for each $\lambda \in B_{k, n-k}^\prime$ let $\Phi_\lambda\colon \Dbcoh(Z) \to \Dbcoh(\Gr_s(k, E))$ be the functor 
\[
\Phi_\lambda(F) := j_*(p_Z^* F \otimes L_\lambda(U));
\]
\item for each $\mu \in B_{k, n-k} \setminus B_{k, n-k}^\prime$ let $\Psi_{\mu^T}\colon \Dbcoh(X) \to \Dbcoh(\Gr_s(k, E))$ be the functor
\[
\Psi_{\mu^T}(G) := L_{\mu^T}(Q) \otimes p^*G.
\]
\end{enumerate}
Then the functors $\Phi_\lambda$ and $\Psi_{\mu^T}$ are fully faithful embeddings of admissible subcategories, and there is a relative semiorthogonal decomposition
\[
\label{relative decomposition for grs}
\tag{$\Delta$}
\begin{aligned}
\Dbcoh(\Gr_s(k, E)) \caniso \langle \quad
& \{ \,\, \Phi_\lambda\left( \Dbcoh(Z) \right) \,\, \}_{\lambda \in B_{k, n-k}^\prime} \, , \\
& \{ \,\, \Psi_{\mu^T}\left( \Dbcoh(X) \right) \,\, \}_{\mu \in B_{k, n-k} \setminus B_{k, n-k}^\prime } \quad \rangle .
\end{aligned}
\]
\end{theorem}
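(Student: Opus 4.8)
The plan is to deduce the theorem from the absolute case, Theorem~\ref{theorem on decomposition for ukn}, which Lemma~\ref{decomposition for ukn is linear} already packages as a relative semiorthogonal decomposition of $\Tot(U)$ over $\A(V)$, by combining faithfully flat descent (Theorem~\ref{relative descent of sod}) with base change (Theorem~\ref{relative base change of sod}). First I would exhibit the kernels: $\Phi_\lambda$ is the relative Fourier--Mukai functor over $X$ with kernel the vector bundle $L_\lambda(U)$ on $Z\times_X\Gr_s(k,E)\caniso\Gr(k,E|_Z)$, and $\Psi_{\mu^T}$ the one with kernel $L_{\mu^T}(Q)$ on $X\times_X\Gr_s(k,E)=\Gr_s(k,E)$; both are perfect, being vector bundles. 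By Lemma~\ref{maps in grs are lci} the maps $j$, $p$, $p_Z$, $i$ are projective locally complete intersection morphisms, so this data fits the setup of Definition~\ref{definition of relative sod}, and it remains to check that it forms an actual relative semiorthogonal decomposition --- which, by Definition~\ref{definition of relative sod}, automatically entails the asserted full faithfulness of $\Phi_\lambda$, $\Psi_{\mu^T}$ and admissibility of their images.

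By Theorem~\ref{relative descent of sod} it is enough to check the relative-SOD property after pulling back along a faithfully flat $\phi\colon X'\to X$ that trivializes $E$ --- say a Zariski cover of $X$ over which $E\caniso\O^n$, or the $\GL_n$-frame bundle of $E$. The construction $\Gr_s(k,-)$ and the bundles $L_\lambda(U)$, $L_{\mu^T}(Q)$ all commute with base change, so the pulled-back data has the same shape. After choosing a trivialization $E|_{X'}\caniso\O_{X'}\otimes V$, the section $s|_{X'}$ is the same datum as a morphism $\sigma\colon X'\to\A(V)$, its zero locus is $\sigma^{-1}(0)$, and the relative form of Example~\ref{tot is a special case of grs} gives a canonical isomorphism $\Gr_s(k,E|_{X'})\caniso\Tot(U)\times_{\A(V)}X'$ identifying the pulled-back kernels with the pullbacks of $L_\lambda(U)$, $L_{\mu^T}(Q)$ from the universal model. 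So the data over $X'$ is the base change along $\sigma$ of the relative semiorthogonal decomposition of Lemma~\ref{decomposition for ukn is linear}.

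It remains to transport that decomposition along $\sigma$. If $\sigma$ were flat this would be immediate from Theorem~\ref{relative base change of sod}; in general it is not, and this is the one genuine subtlety. The resolution is that $\Tot(U)$ is Tor-independent from $X'$ over $\A(V)$: recall from Example~\ref{tot is a special case of grs} that $\Tot(U)=\Gr_s(k,\O\otimes V)$ is the zero locus in $\A(V)\times\Gr(k,V)$ of a \emph{regular} section of $\mathrm{pr}_2^*Q$, and pulling back the associated Koszul resolution along $\sigma\times\mathrm{id}$ yields the Koszul complex of the section cutting out $\Gr_s(k,E|_{X'})$, which stays a resolution precisely because that section is again regular --- which is exactly what Lemma~\ref{maps in grs are lci} establishes. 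Hence the derived and ordinary fibre products coincide and the locally complete intersection hypotheses of Definition~\ref{definition of relative sod} persist after base change, so the base change theorem applies in the ``faithful base change'' form of \cite[Th.~5.6]{kuznetsov-basechange} and shows that the data over $X'$ is a relative semiorthogonal decomposition. By Theorem~\ref{relative descent of sod} so is the original data over $X$, which gives (\ref{relative decomposition for grs}).

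I expect the main obstacle to be exactly this base change along the non-flat classifying map $\sigma$: the step from the flat statement of Theorem~\ref{relative base change of sod} to the present setting is legitimate only because the regularity of the defining section $\tilde s$ (Lemma~\ref{maps in grs are lci}) forces Tor-independence and preserves the locally complete intersection conditions built into Definition~\ref{definition of relative sod}. Everything else is bookkeeping with the relative semiorthogonal decomposition machinery of Subsection~\ref{linear subcategories}.
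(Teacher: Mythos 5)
Your skeleton --- relative Fourier--Mukai kernels, lci checks via Lemma~\ref{maps in grs are lci}, descent to a cover via Theorem~\ref{relative descent of sod}, and comparison with the universal model of Lemma~\ref{decomposition for ukn is linear} --- is exactly the paper's. The divergence, and the gap, is at the step you yourself flag: transporting the decomposition along the classifying map $\sigma\colon X'\to\A(V)$. You invoke ``the faithful base change form of \cite[Th.~5.6]{kuznetsov-basechange}'' justified by Tor-independence. But the base change theorem available in this paper's framework (Theorem~\ref{relative base change of sod}) is stated only for \emph{flat} $\phi$, and the remark following it explains precisely why the faithful/non-flat version is not taken off the shelf here: Definition~\ref{definition of relative sod} builds in lci conditions whose preservation under non-flat base change is unclear, and, beyond that, faithfulness in Kuznetsov's sense is a condition on commutation of pushforwards and adjoints with base change that Tor-independence of $\Tot(U)$ with $X'$ alone does not establish (one must also handle the components over $\{0\}$ and verify the finite-Tor-dimension hypotheses). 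As written, the crucial step of your argument rests on a theorem the paper deliberately declines to prove, with its hypotheses asserted rather than checked.

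The paper's resolution makes the whole issue evaporate: near any point $z\in Z$, the regularity of $s$ means its coordinate functions $f_1,\dots,f_n$ form a regular sequence in $\O_{X,z}$, and by the local criterion of flatness \cite[Tag~07DY]{stacks-project} (plus openness of the flat locus) the classifying map $W_z\to\A^n$ is \emph{flat} after shrinking $W_z$. So only the flat base change theorem, Theorem~\ref{relative base change of sod}, is ever needed. The price is that this only covers a neighborhood of $Z$; over $W=X\setminus Z$ the classifying map need not be flat, and the paper instead observes directly that $\Gr_s(k,E)|_W$ is the Grassmannian bundle $\Gr(k-1,E|_W/\langle s\rangle)$, on which the $\Phi_\lambda$ components vanish and the $\Psi_{\mu^T}$ components form the relative Kapranov collection \cite[(3.7)]{kapranov88}. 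Your proposal omits any treatment of $X\setminus Z$; in your route it would be absorbed into the faithful base change step, but since that step is the unproved one, the omission matters. To repair your argument either (a) verify Kuznetsov's faithfulness conditions from the Koszul-resolution Tor-independence you sketch, or (b) switch to the paper's cover $\{W\}\cup\{W_z\}_{z\in Z}$ and use flatness of $W_z\to\A^n$.
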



\begin{proof}
The functors $\Phi_\lambda$ and $\Psi_{\mu^T}$ are by definition relative Fourier--Mukai transforms with perfect kernels $L_\lambda(U)|_{\Gr_Z(k, E|_Z)} \in \Dperf(\Gr_Z(k, E|_Z))$ and $L_{\mu^T}(Q) \in \Dperf(\Gr_s(k, E))$. The projection morphisms mentioned in Definition \ref{definition of relative sod} are $j$, $p$, $p_Z$, and $\mathrm{id}_{\Gr_s(k, E)}$, which are projective locally complete intersections by Lemma \ref{maps in grs are lci}. So by Theorem \ref{relative descent of sod} we can check the existence of (\ref{relative decomposition for grs}) Zariski-locally.

Let $z \in Z$ be a closed point. Choose an affine open neighborhood $W_z \subset X$ of $z$ 
in $X$ 
such that $E|_{W_z}$ is trivial. The section $s|_{W_z} \in \Gamma(W_z, \O_{W_z}^{\oplus n})$ is locally given by $n$ functions, $f_1, \ldots, f_n$, which define a map $W_z \to \A^n$\!. The section $s$ is regular and vanishes at $z$, so in the local ring of $z$ the stalks of functions $f_1, \ldots, f_n$ form a regular sequence. By \cite[Tag~07DY]{stacks-project} and openness of flat locus, after possibly shrinking $W_z$, we may assume that the map $W_z \to \A^n$ is flat.


By construction $E|_{W_z}$ and $s|_{W_z}$ are pullbacks along $W_z \to \A^n$ of the trivial rank $n$ vector bundle and its tautological section. Recall that the $\Gr_s(k, E)$-construction applied to the tautological section of $\O_{\A^n}^{\oplus n}$ is exactly the total space of the tautological bundle on $\Gr(k, n)$. Thus we obtain a diagram of Cartesian squares:

\begin{center}
\begin{tikzpicture}[node distance=1cm, auto]
  \node (tot) {$\Tot(U_{k, n})$};
  \node (grsuz) [right =of tot] {$\Gr_{s|_{W_z}}(k, E|_{W_z})$};
  \node (grs) [right =of grsuz] {$\Gr_s(k, E)$};
  \node (an) [below =of tot] {$\A^n$};
  \node (uz) [below =of grsuz] {$W_z$};
  \node (x) [below =of grs] {$X$};
  \draw[->] (grsuz) to (tot);
  \draw[->] (grsuz) to (grs);
  \draw[->] (tot) to (an);
  \draw[->] (grsuz) to (uz);
  \draw[->] (grs) to (x);
  \draw[->] (uz) to node [swap] {\scriptsize flat} (an);
  \draw[right hook->] (uz) to (x);
  \node (pullback2) [left =of grsuz, xshift=2.2cm, yshift=-0.5cm] {\Large $\urcorner$};
  \node (pullback3) [right =of grsuz, xshift=-2.2cm, yshift=-0.5cm] {\Large $\ulcorner$};
\end{tikzpicture}
\end{center}

Note that by construction the pullbacks of the tautological bundles from both $\Tot(U_{k, n})$ and~$\Gr_s(k, E)$ to $\Gr_{s|_{W_z}}(k, E|_{W_z})$ coincide. The Fourier--Mukai kernels appearing in Lemma~\ref{decomposition for ukn is linear} and in (\ref{relative decomposition for grs}) are given in terms of the Schur functors of the tautological bundles. Therefore the base change of (\ref{relative decomposition for grs}) to $\Gr_{s|_{W_z}}(k, E|_{W_z})$ coincides with the base change of (\ref{decomposition for ukn}). Since the morphism $W_z \to \A^n$ is flat, the base change of (\ref{decomposition for ukn}) to $\Gr_{s|_{W_z}}(k, E|_{W_z})$ is, by Theorem \ref{relative base change of sod} and Lemma \ref{decomposition for ukn is linear}, a relative semiorthogonal decomposition. Hence the base change of (\ref{relative decomposition for grs}) also is one.

Now let $W = X \setminus Z$ be the nonvanishing locus of $s$. The restriction of $\Gr_s(k, E)$ to $W$ is a relative Grassmannian $\Gr(k-1, E|_W / \langle s \rangle)$ over $W$. The base changes of $\Phi_{\lambda}$ are inclusions of the zero subcategory, while the base changes of $\Psi_{\mu^T}$ form a relative version of the Kapranov's collection on $\Gr(k-1, n-1)$.
This is a full exceptional collection on the relative Grassmannian by \cite[(3.7)]{kapranov88}.

Thus for the open cover consisting of $W$ and the collection of $W_z$ for each point $z\in Z$ all conditions of Theorem \ref{relative descent of sod} are satisfied. Hence the semiorthogonal decomposition descends from the cover to $\Gr_s(k, E)$ itself.
\end{proof}

\begin{corollary}
\begin{enumerate}
\item The functors $\Phi_\lambda$ and $\Psi_{\mu^T}$ restrict to functors $\Phi_{\lambda, \, \mathrm{perf}}$ and $\Psi_{\mu^T, \, \mathrm{perf}}$ between the categories of perfect complexes in a way that
\[
\begin{aligned}
\Dperf(\Gr_s(k, E)) \caniso \langle \quad
& \{ \,\, \Phi_{\lambda, \, \mathrm{perf}}\left( \Dperf(Z) \right) \,\, \}_{\lambda \in B_{k, n-k}^\prime} \, , \\
& \{ \,\, \Psi_{\mu^T, \, \mathrm{perf}}\left( \Dperf(X) \right) \,\, \}_{\mu \in B_{k, n-k} \setminus B_{k, n-k}^\prime } \quad \rangle  .
\end{aligned}
\]
\item The functors  $\Phi_\lambda$ and $\Psi_{\mu^T}$ extend to functors $\Phi_{\lambda, \, \mathrm{qc}}$ and $\Psi_{\mu^T, \, \mathrm{qc}}$ between the unbounded categories of quasi-coherent sheaves in a way that
\[
\begin{aligned}
\Dqcoh(\Gr_s(k, E)) \caniso \langle \quad
& \{ \,\, \Phi_{\lambda, \, \mathrm{qc}}\left( \Dqcoh(Z) \right) \,\, \}_{\lambda \in B_{k, n-k}^\prime} \, , \\
& \{ \,\, \Psi_{\mu^T, \, \mathrm{qc}}\left( \Dqcoh(X) \right) \,\, \}_{\mu \in B_{k, n-k} \setminus B_{k, n-k}^\prime } \quad \rangle  .
\end{aligned}
\]
\end{enumerate}
\end{corollary}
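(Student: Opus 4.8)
The proof will be a direct application of Lemma \ref{relative decompositions are for all categories}, which upgrades any relative semiorthogonal decomposition from $\Dbcoh$ to $\Dperf$ and $\Dqcoh$; no new geometric input is needed beyond what Theorem \ref{theorem on relative decomposition for grs} has already produced, so the argument is essentially a matter of bookkeeping.

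First I would recall that the proof of Theorem \ref{theorem on relative decomposition for grs} exhibits the data
\[
\bigl(\Gr_s(k, E),\ \{\, Z,\ L_\lambda(U)|_{\Gr_Z(k, E|_Z)} \,\}_{\lambda \in B_{k, n-k}^\prime},\ \{\, X,\ L_{\mu^T}(Q) \,\}_{\mu \in B_{k, n-k} \setminus B_{k, n-k}^\prime}\bigr)
\]
as a relative semiorthogonal decomposition of $\Gr_s(k, E)$ over $X$ in the sense of Definition \ref{definition of relative sod}: the relevant projection morphisms $j$, $p$, $p_Z$ and the identity of $\Gr_s(k, E)$ are projective locally complete intersection morphisms by Lemma \ref{maps in grs are lci}, the kernels are perfect, and the associated coherent components assemble into a semiorthogonal decomposition of $\Dbcoh(\Gr_s(k, E))$. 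I would also record that $\Phi_\lambda$ and $\Psi_{\mu^T}$ are by construction the relative Fourier--Mukai functors $G \mapsto p_{2\,*}(p_1^* G \otimes K)$ attached to those perfect kernels; such a functor is a priori defined on all of $\Dqcoh$, and by Lemma \ref{lci fourier-mukai properties} it restricts to functors between the subcategories $\Dperf$ and between the subcategories $\Dbcoh$. This produces the functors $\Phi_{\lambda, \, \mathrm{perf}}$, $\Psi_{\mu^T, \, \mathrm{perf}}$ and $\Phi_{\lambda, \, \mathrm{qc}}$, $\Psi_{\mu^T, \, \mathrm{qc}}$ named in the statement, agreeing with $\Phi_\lambda$, $\Psi_{\mu^T}$ on $\Dbcoh$.

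Then I would apply Lemma \ref{relative decompositions are for all categories} to the relative semiorthogonal decomposition above. Its first clause gives that each of $\Phi_{\lambda, \, \mathrm{qc}}$, $\Psi_{\mu^T, \, \mathrm{qc}}$ (respectively $\Phi_{\lambda, \, \mathrm{perf}}$, $\Psi_{\mu^T, \, \mathrm{perf}}$) is a fully faithful embedding onto an admissible subcategory of $\Dqcoh(\Gr_s(k, E))$ (respectively $\Dperf(\Gr_s(k, E))$), and its second clause gives that these subcategories, taken in the order of (\ref{relative decomposition for grs}), form semiorthogonal decompositions of the ambient categories. This is exactly the content of the two items of the corollary.

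There is no genuine obstacle, since the statement is a formal consequence of the cited machinery. The only point deserving a word of care is that the functors appearing at the three levels $\Dbcoh$, $\Dperf$, $\Dqcoh$ are one and the same Fourier--Mukai transform with a fixed perfect kernel over $X$, merely evaluated on the three nested subcategories of $\Dqcoh$; this compatibility is precisely what Lemma \ref{relative decompositions are for all categories} is phrased to provide, so it may be invoked without further checking.
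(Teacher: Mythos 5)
Your proposal is correct and is exactly the paper's argument: the corollary is proved by applying Lemma \ref{relative decompositions are for all categories} to the relative semiorthogonal decomposition established in Theorem \ref{theorem on relative decomposition for grs}, with the functors at all three levels being the same Fourier--Mukai transforms with perfect kernels. The paper's proof is just the one-line citation of these two results, so your more detailed bookkeeping adds nothing essentially new but is entirely accurate.
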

\begin{proof}
Follows from Theorem \ref{theorem on relative decomposition for grs} and Lemma \ref{relative decompositions are for all categories}.
\end{proof}

\printbibliography

\end{document}